\newtheorem{theorem}{Theorem}[section]
\newtheorem{lemma}[theorem]{Lemma}
\theoremstyle{definition}
\newtheorem{definition}[theorem]{Definition}
\newtheorem{example}[theorem]{Example}
\newtheorem{question}[theorem]{Question}
\newtheorem{proposition}[theorem]{Proposition}
\newtheorem{corollary}[theorem]{Corollary}
\newtheorem{conjecture}[theorem]{Conjecture}
\theoremstyle{remark}
\newtheorem{remark}[theorem]{Remark}
\newtheorem{conclusion}[theorem]{Conclusion}
\numberwithin{equation}{section}
\newcommand{\rk}{{\rm rk}}
\DeclareMathOperator*{\supp}{Supp}
\newcommand{\R}{\ensuremath{\mathbb{R}}}
\newcommand{\Q}{\ensuremath{\mathbb{Q}}}
\newcommand{\Z}{\ensuremath{\mathbb{Z}}}
\newcommand{\C}{\ensuremath{\mathbb{C}}}
\newcommand{\N}{\ensuremath{\mathbb{N}}}
\newcommand{\PP}{\ensuremath{\mathbb{P}}}
\newcommand{\merom}[3]{\ensuremath{#1:#2 \dashrightarrow #3}}
\newcommand{\holom}[3]{\ensuremath{#1:#2  \rightarrow #3}}
\newcommand{\fibre}[2]{\ensuremath{#1^{-1} (#2)}}
\newcommand\sA{{\mathcal A}}
\newcommand\sE{{\mathcal E}}
\newcommand\sF{{\mathcal F}}
\newcommand\sH{{\mathcal H}}
\newcommand\sO{{\mathcal O}}
\newcommand\bZ{{\mathbb Z}}
\newcommand\sD{{\mathcal D}}
\DeclareMathOperator*{\sing}{sing}
\DeclareMathOperator*{\nons}{nons}
\newcommand{\NEX}{\overline{\mbox{NE}}(X)}
\newcommand{\NAX}{\overline{\mbox{NA}}(X)}
\DeclareMathOperator*{\NS}{NS}
\title{Bimeromorphic geometry of K\"ahler threefolds} 
\author{Andreas H\"oring}
\address{Andreas H\"oring, Universit\'e C\^ote d'Azur, CNRS, LJAD, France}
\email{hoering@unice.fr}
\author{Thomas Peternell}
\address{Thomas Peternell, Mathematisches Institut, Universit\"at Bayreuth, 95440 Bayreuth, 
Germany}
\email{thomas.peternell@uni-bayreuth.de}
\subjclass[2010]{Primary 32J27, 14E30, 14J35, 14J40, 14M22, 32J25}
\date{\today}
\begin{document}

\begin{abstract}
We describe the recently established minimal model program for (non-algebraic) K\"ahler threefolds as well as the abundance theorem 
for these spaces. 
\end{abstract}

\maketitle


\section{Introduction}

Given a complex projective manifold $X$, the Minimal Model Program (MMP) predicts that either $X$ is covered by 
rational curves ($X$ is {\it uniruled}) or $X$ has a - slightly singular - birational {\it minimal} model $X'$ whose canonical divisor $K_{X'}$ is nef; and then
the abundance conjecture says that some multiple $mK_{X'}$ is spanned by global sections (so $X'$ is a {\it good minimal model}). 
The MMP also predicts how to achieve the birational model, namely by a sequence of divisorial contractions and flips. 
In dimension three, the MMP is completely established (cf. \cite{Uta92}, \cite{KM98} for surveys), in dimension four, the existence of minimal models is established (\cite{BCHM10}, \cite{Fuj04}, \cite{Fu05}), 
but abundance is wide open. In higher dimensions minimal models exists if $X$ is of general type \cite{BCHM10}; abundance not being an issue in this case. 

In this article we discuss the following natural

\begin{question}  Does the MMP work for general (non-algebraic) compact K\"ahler manifolds?  
\end{question} 

Although the basic methods used in minimal model theory all fail in the K\"ahler case,  there is no apparent reason why the MMP should not
hold in the K\"ahler category. And in fact, in recent papers \cite{HP16}, \cite{HP15} and \cite{CHP16},  the K\"ahler MMP was established in dimension three:

\begin{theorem} \label{theoremmain}
Let $X$ be a  normal $\Q$-factorial compact   K\"ahler threefold with  terminal singularities. Then there exists a MMP (i.e. a finite sequence of divisorial contractions and flips) 
$$
X \dashrightarrow X'
$$
such that the following holds:
\begin{itemize}
\item If $X$ is not covered by rational curves, then $X'$ is a good minimal model: there exists
a $m \in \N$ such that $m K_{X'}$ is generated by its global sections.
\item If $X$ is covered by rational curves, then $X'$ is a Mori fibre space: there exists
a fibration $\varphi: X' \rightarrow Y$ such that $-K_{X'}$ is $\varphi$-ample and $b_2(X') = b_2(Y)  + 1$.
\end{itemize}
\end{theorem}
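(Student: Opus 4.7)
The strategy is to emulate the projective minimal model program inside the analytic category: iteratively contract $K_X$-negative extremal rays and perform flips until either a fibre-type contraction appears or $K_X$ becomes nef, and in the latter case invoke an abundance theorem. Three separate ingredients need to be established in the K\"ahler setting: a Cone and Contraction Theorem, existence of flips with the K\"ahler condition preserved, and abundance for nef canonical class.

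The first step is to replace the Mori cone by the cone $\NA{X}$ generated inside $H^{1,1}(X,\R)$ by classes of positive closed currents of bidimension $(1,1)$, and to prove that the part of $\NA{X}$ on which $K_X$ is negative is locally rational polyhedral with extremal rays spanned by classes of rational curves. Since the classical base-point-free theorem depends on the ampleness of integral divisors, the contraction of such a ray has to be produced analytically: one constructs a supporting nef class inside the K\"ahler cone via Kodaira-type vanishing and then shows that this class descends along the candidate contraction. For small rays one instead constructs the flip directly. At each step one must check that the new space is again a $\Q$-factorial compact K\"ahler threefold with terminal singularities; the delicate part is the K\"ahler condition, since a flip can \emph{a priori} destroy the existence of a global K\"ahler form, and one has to use the explicit geometry of threefold flipping contractions together with a careful analysis of the behaviour of $(1,1)$-classes. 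Termination of the whole process follows from Shokurov's difficulty function, which depends only on discrepancies of terminal singularities and hence is insensitive to projectivity.

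Finally one analyses the output. If the MMP ends with a contraction of fibre type $\varphi : X' \to Y$, then $-K_{X'}$ is $\varphi$-ample by construction of the extremal ray, the general fibre is Fano and hence rationally connected (so $X$ itself is uniruled), and $b_2(X') = b_2(Y) + 1$ because $\varphi$ contracts exactly one extremal ray. If instead the MMP ends with $K_{X'}$ nef, then $K_X$ is pseudo-effective, so by the K\"ahler analogue of the uniruledness/pseudo-effectivity criterion $X$ is not uniruled; one then invokes the K\"ahler abundance theorem in dimension three to obtain $m \in \N$ with $mK_{X'}$ globally generated. The principal obstacles I expect are, first, the analytic production of contractions and flips in the absence of ampleness, especially the preservation of K\"ahlerness under a threefold flip, and second the abundance step itself, which in dimension three requires a case analysis on the numerical dimension $\nu(K_{X'}) \in \{0,1,2,3\}$, the intermediate values $\nu = 1$ and $\nu = 2$ being the hardest.
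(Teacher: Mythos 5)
Your overall architecture (cone and contraction theorem for $\NAX$, flips, termination, abundance) is the right one and matches the paper's treatment of the \emph{non-uniruled} case, but there are two genuine gaps. The serious one is the uniruled case. You run the $K_X$-MMP uniformly and expect it to end either with $K_{X'}$ nef or with a fibre-type contraction. However, every mechanism available for proving the cone theorem \ref{theoremNA} in the K\"ahler setting presupposes that $K_X$ is pseudoeffective: the proof rests on Boucksom's divisorial Zariski decomposition of $K_X$ and on Lemma \ref{lemmafundamental}, and by Brunella's Theorem \ref{theorembrunella} pseudoeffectivity of $K_X$ is \emph{equivalent} to $X$ not being uniruled. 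So for uniruled $X$ your cone/contraction machinery never gets off the ground and the dichotomy in your last paragraph is not justified. The paper's actual route for non-algebraic uniruled $X$ is different: the MRC-fibration must have two-dimensional base (otherwise $H^0(X,\Omega^2_X)=0$ and $X$ is projective by Kodaira), one chooses a K\"ahler class $\omega$ with $(K_X+\omega)\cdot F=0$ on the general MRC-fibre, invokes P\u aun's theorem to make $K_X+\omega$ pseudoeffective, runs a $(K_X+\omega)$-MMP, then uses the nef reduction to show (Theorem \ref{theoremmfs}) that the nef class $K_{X'}+\omega'$ is pulled back from a K\"ahler surface, and finally runs a relative MMP over that surface to reach the Mori fibre space. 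None of this appears in your proposal.

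The second gap is the construction of the contraction itself. ``Construct a supporting nef class via Kodaira-type vanishing and show that it descends along the candidate contraction'' is circular: the candidate contraction is precisely what has to be built, and the supporting class $\alpha=K_X+\omega$ comes for free from the cone theorem, not from vanishing. The content is to produce a morphism from $\alpha$ in the absence of any semiample line bundle. The paper does this by identifying the non-K\"ahler locus of $\alpha$ with its Null locus (Collins--Tosatti), showing that this locus is exactly the one swept out by curves of the ray, and then verifying the hypotheses of the Ancona--Van Tan/Grauert criterion (Theorem \ref{theoremgrauert}), i.e.\ ampleness of the conormal sheaf, via Boucksom's modification $\mu^*\alpha=\omega'+E$; the K\"ahler property of the target is then checked by the intersection-theoretic Lemma \ref{lemmakaehler}. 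Some analytic contraction criterion of this kind is indispensable, and your proposal does not name one. (Minor points: small rays are handled by first building the small contraction and then applying Mori's flip theorem, which is analytically local, rather than ``constructing the flip directly''; and your termination via the difficulty function agrees with the paper's appeal to the standard projective argument.)
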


Two main issues in the MMP are to construct ``contractions of extremal rays'' and to construct $K_X-$negative rational curves.
In the algebraic case, contractions are constructed by exhibiting nef line bundles of the form $K_X + H$ with $H$ ample. Suitable multiples of these line bundles are spanned and then the contraction is
given by the sections of the line bundle. Rational curves are constructed using reduction to char $p$. Both methods fundamentally
fail in the K\"ahler setting. One reason is of course, that there are only a few line bundles (and no ample line bundles)  on a general K\"ahler manifold, and also 
the Mori cone $\NEX$ is too small to be relevant. 
At least in dimension three, it is nevertheless possible to construct good minimal models. The aim of this 
article is to explain how this is achieved, including the necessary framework of K\"ahler geometry. To give a flavor of the difficulties arising in the 
K\"ahler setting, we mention the following 

\begin{conjecture} Let $X$ be a compact K\"ahler manifold such that the canonical class  is not nef (i.e, the first Chern class $c_1(K_X)$ is not in the closure of the
K\"ahler cone). Then there exists a rational curve $C $ such that $K_X \cdot C < 0.$ 
\end{conjecture}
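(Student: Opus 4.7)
The plan is to combine the duality between positive cones in K\"ahler geometry with a bend-and-break type argument, and then to split into cases depending on how many subvarieties $X$ possesses. The first step is to translate the non-nefness of $K_X$ into a positivity statement for a dual test class. By the Demailly--P\u{a}un characterization of the K\"ahler cone and the subsequent theory of movable classes due to Boucksom, the hypothesis that $K_X$ lies outside the closure of the K\"ahler cone is equivalent to the existence of a class $\alpha$ in the closure of the cone of movable $(n-1,n-1)$-classes with $K_X \cdot \alpha < 0$. This replaces the classical input (the existence of an irreducible curve on which $K_X$ is negative) that is automatic in the projective case but not available a priori for arbitrary K\"ahler $X$.

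The second step is to realize the test class $\alpha$ by an honest curve. On a projective manifold the dual cone theorem of Boucksom--Demailly--P\u{a}un--Peternell identifies the movable cone with the closed cone generated by classes of strongly movable curves, so $\alpha$ can be approximated by classes of curves $C_t$ with $K_X \cdot C_t < 0$ for $t$ small. In the K\"ahler setting one would want an analogous duality, but movable classes need not a priori be limits of curve classes; one natural substitute is to pass to the algebraic reduction $X \dashrightarrow Z$ and argue on the generic fibre, which carries many curves, or, more geometrically, to use a Graber--Harris--Starr type argument on fibrations produced by the MMP. In the threefold case one instead stratifies by the algebraic dimension $a(X) \in \{0,1,2,3\}$: when $a(X) = 3$ the manifold is Moishezon and Mori's theorem applies after a resolution; for $a(X) < 3$ one combines the structure of the algebraic reduction with the positivity of $-K_X$ on fibres to produce rational curves there.

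The third step, once an irreducible curve $C$ with $K_X \cdot C < 0$ has been found, is to run bend-and-break. In characteristic zero one would use Koll\'ar's formulation: deformations of $C$ keeping two points fixed form a family of dimension at least $-K_X \cdot C - \dim X$, and if this is positive the family degenerates into a chain containing a rational component. The deformation count is bounded using a K\"ahler form $\omega$ in place of an ample polarization, since $\omega \cdot C$ still controls the Hilbert scheme of curves with bounded $\omega$-degree via Bishop-type compactness.

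The hardest step will unquestionably be the second one: producing a genuine curve, and not merely a cohomology class, along which $K_X$ is negative. On a non-algebraic K\"ahler manifold of high algebraic codimension there may be extremely few subvarieties, so the argument must somehow extract geometric objects out of a purely cohomological negativity condition. This is the central difficulty of the K\"ahler MMP and the reason the conjecture is established in dimension three in \cite{HP16} only through an intricate case analysis by algebraic dimension, while remaining wide open in higher dimensions.
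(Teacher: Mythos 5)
The statement you are trying to prove is presented in the paper as an open \emph{conjecture}: the paper offers no proof of it in general, and explicitly remarks that it is not even clear whether there exists \emph{any} curve with $K_X\cdot C<0$. Your proposal honestly concedes that its second step is exactly this open problem, so it is a strategy sketch rather than a proof; but beyond that admitted gap, two of your steps contain concrete errors. First, you assert that $c_1(K_X)\notin\overline{\mathcal K}$ is equivalent to the existence of a \emph{movable} class $\alpha$ with $K_X\cdot\alpha<0$. This conflates two different dualities: the nef cone is dual to the cone $\NAX$ generated by \emph{all} positive closed currents of bidimension $(1,1)$ (established in the paper for threefolds, and via Demailly--P\u aun in general), whereas the movable cone is (conjecturally, in the K\"ahler case) dual to the \emph{pseudoeffective} cone. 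Negativity against a movable class would say $K_X$ is not pseudoeffective, which is the hypothesis of Conjecture \ref{conjectureBDPP}, a strictly stronger conclusion than non-nefness. A class can be pseudoeffective but not nef, detected only by a non-movable current (e.g. integration over a rigid negative curve), and this is precisely the case the paper's Section \ref{subsectionrational} has to treat separately.

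Second, your bend-and-break step does not work in characteristic zero for an arbitrary irreducible curve $C$ with $K_X\cdot C<0$: the deformation-theoretic lower bound for curves of genus $g$ in an $n$-fold is $-K_X\cdot C+(1-g)n$ (or $-K_X\cdot C+(n-3)(1-g)$ at the level of the Chow/Hilbert space), which is useless for large $g$; this is exactly why Mori needed Frobenius twisting in characteristic $p$, and why no K\"ahler analogue is available. The paper's threefold argument exploits the fact that for $n=3$ the genus term drops out, so the curve merely \emph{moves} to cover a surface $S$, and the production of \emph{rational} curves is then delegated to surface theory via Lemma \ref{lemmafundamental} (Boucksom's divisorial Zariski decomposition plus adjunction forces $K_S$ to be non-pseudoeffective), with Brunella's foliation theorem handling the non-pseudoeffective case. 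That route is genuinely different from your proposed stratification by algebraic dimension $a(X)$ and use of the algebraic reduction, and it is worth noting that your step~3 would remain broken even if step~2 were granted.
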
 

As we will see in the next section it is even not clear whether there is {\em any} curve $C$ such that $K_X \cdot C < 0$!

\section{Brief review of the algebraic case} \label{sectionalgebraic}

Let $X$ be a complex projective variety, and let $\NS(X)$ be its N\'eron-Severi group. We denote by $N^1(X):= \NS(X) \otimes \R$
the corresponding real vector space, and by $N_1(X)$ its dual, the vector space generated by classes of curves.
By definition a class $\alpha \in N^1(X)$ is nef if it is in the closure of the cone generated by ample divisors.
This somewhat abstract definition (which has the advantage of generalising to the K\"ahler case, cf. Section \ref{sectionkaehler})
can be translate in more geometric terms by Kleiman's criterion. We have
\begin{equation} \label{kleiman}
\alpha \in N^1(X) \mbox{ is nef } \Leftrightarrow \alpha \cdot C \geq 0 \qquad \forall \ C \subset X \mbox{ a curve}
\end{equation}
One of the cornerstones of the minimal model program is to give a much more precise description
of nefness when $\alpha$ is the class defined by the canonical divisor; for all definitions and basic results around the MMP 
we refer e.g. to \cite{KMM87} and \cite{KM98}. 

\begin{theorem} \label{theoremNE} 
Let $X$ be a normal $\Q$-factorial projective variety with  terminal singularities. 
Then there exists an at most countable family $(\Gamma_i)_{i \in I}$  of rational curves  on $X$
such that 
$$
0 < -K_X \cdot \Gamma_i \leq 2 \dim X - 1
$$
and
$$
\NEX = \NEX_{K_X \geq 0} + \sum_{i \in I} \R^+ [\Gamma_i].
$$
\end{theorem}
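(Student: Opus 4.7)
I would follow the classical Kawamata--Koll\'ar--Mori--Reid approach, which splits naturally into three largely independent parts.

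\emph{Part 1 (Vanishing, base-point-freeness, rationality).} The backbone will be Kawamata--Viehweg vanishing, which on a $\bQ$-factorial terminal variety yields the \emph{Base-Point-Free Theorem}: if $D$ is a nef Cartier divisor with $aD-K_X$ nef and big for some $a>0$, then $|mD|$ is base-point-free for $m\gg 0$. From this I would deduce the \emph{Rationality Theorem}: if $K_X$ is not nef and $H$ is an ample Cartier divisor, the nef threshold $r(H)=\sup\{t\in\bR:K_X+tH\text{ is not nef}\}$ is rational, with denominator controlled by $\dim X$ and the index of the singularities.

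\emph{Part 2 (Extremal faces and their contractions).} For a general ample $H$ with $K_X$ not nef, the class $L_H:=K_X+r(H)H$ will be nef but not ample, so it lies on a supporting hyperplane cutting out a face $F_H$ of $\NEX$ contained in the $K_X$-negative half-space up to the origin. Applying BPF to a high multiple of $L_H$ produces a contraction morphism $\pi_H:X\to Y_H$ whose contracted curve classes span $F_H$. Letting $H$ vary exhausts the $K_X$-negative boundary of $\NEX$ by such faces, and a convexity argument (extremal rays in $\{K_X<0\}$ are isolated and can accumulate only on $\{K_X=0\}$) will then give countability of $(\Gamma_i)_{i\in I}$ and the desired decomposition
\[
\NEX=\NEX_{K_X\geq 0}+\sum_{i\in I}\bR^+[\Gamma_i].
\]

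\emph{Part 3 (Rational curves and the length bound).} It remains to show that each extremal ray $R_i$ is generated by a \emph{rational} curve $\Gamma_i$ with $-K_X\cdot\Gamma_i\leq 2\dim X-1$. Here I would invoke Mori's bend-and-break: reduce $X$ modulo $p$, choose any curve $C$ whose class lies on $R_i$, and precompose a morphism $f:C\to X$ with high Frobenius powers so that the expected dimension of $\Hom(C,X)$---governed by $-K_X\cdot f_*C$---exceeds what a proper moduli space can accommodate once one fixes two points on $C$. The family must therefore degenerate, producing a reducible limit cycle with at least one rational component; iterating and lifting back to characteristic zero yields $\Gamma_i$, with the sharp constant $2\dim X-1$ in the terminal $\bQ$-factorial setting due to Kawamata.

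\emph{Main obstacle.} The decisive step is Part~3: bend-and-break depends essentially on reduction modulo $p$ and on the Frobenius morphism, for which there is no direct K\"ahler analogue. Part~2 is also intrinsically projective, exploiting both the abundance of ample Cartier divisors (to define $H$ and hence $r(H)$) and BPF (to turn nef classes into genuine contractions). These two failures are exactly what motivates the machinery surveyed in the remainder of the paper.
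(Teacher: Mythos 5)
The paper offers no proof of this statement: it is quoted as standard background with a pointer to \cite{KMM87} and \cite{KM98}, and your sketch is precisely the Kawamata--Koll\'ar--Mori--Reid argument found there (Kawamata--Viehweg vanishing $\Rightarrow$ non-vanishing and base-point-freeness $\Rightarrow$ rationality theorem $\Rightarrow$ formal cone decomposition and countability, with bend-and-break in characteristic $p$ supplying the rational generators and the length bound), so there is nothing to contrast. The only thin spot is the hand-off from Part 2 to Part 3: to run bend-and-break on an extremal ray $R_i$ you first need an actual curve whose class lies on $R_i$ and you need the degenerating family to stay on $R_i$ --- both are furnished by the contraction $\pi_H$ of Part 2 (work inside a positive-dimensional fiber, where every curve is contracted and hence has class on the ray), and since $X$ is singular the dimension estimate for $\Hom(C,X)$ must be carried out on a resolution, which is exactly Kawamata's length-of-extremal-rays argument that you cite.
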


Given a projective variety $X$, the aim of the minimal model program is to replace $X$ by some birational
model such that either the canonical divisor is nef or we have a Mori fibre space structure. The way to get
closer to this model is to contract the extremal rays appearing in the cone theorem. The existence
of these contractions is assured by the contraction theorem: 

\begin{theorem} \label{theoremcontraction} 
Let $X$ be a normal $\Q$-factorial projective variety with  terminal singularities. 
Let $R$ be a $K_X$-negative extremal ray in $\NEX$.
Then there exists a morphism with connected fibres $\holom{\varphi}{X}{Y}$ onto 
a normal projective variety $Y$ such that a curve $C \subset X$
is contracted onto a point if and only if $[C] \in R$.

We call $\varphi$ the elementary Mori contraction associated to the extremal ray $R$.
\end{theorem}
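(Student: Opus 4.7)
The plan is to realize the contraction as the Stein factorization of the morphism defined by a suitable base-point-free linear system on $X$, following the classical Kawamata--Shokurov--Reid strategy. First I would use the cone theorem together with the Kawamata rationality theorem to produce a nef $\Q$-Cartier divisor $L$ on $X$ supporting the ray $R$, in the sense that $L \cdot R = 0$, $L$ is strictly positive on $\NEX \setminus R$, and moreover $L - K_X$ is ample. Roughly, starting from an ample divisor $H$ one takes the nef threshold $\mu_0$ so that $K_X + \mu_0 H$ just becomes nef while vanishing on $R$; the rationality theorem guarantees $\mu_0 \in \Q$, and after clearing denominators $L := K_X + \mu_0 H$ has the required properties.

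Next, since $X$ has terminal (hence klt) singularities and $L - K_X$ is ample, the Kawamata--Shokurov base-point-free theorem yields an integer $m \gg 0$ such that $|mL|$ is base-point free. Let $\psi \colon X \to \PN^N$ be the induced morphism and take its Stein factorization $\psi \colon X \xrightarrow{\varphi} Y \to \PN^N$; then $Y$ is a normal projective variety and $\varphi$ has connected fibres onto $Y$.

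Finally, to identify the contracted curves: an irreducible $C \subset X$ is contracted by $\varphi$ if and only if $mL \cdot C = 0$, equivalently $L \cdot C = 0$; since $L$ supports $R$, this is equivalent to $[C] \in R$, which is exactly the characterization sought. The main obstacle is the base-point-free theorem in the second step: its proof rests on Kawamata--Viehweg vanishing together with an intricate induction that lifts sections from (a perturbation of) the non-klt locus, and the first step depends on the same vanishing machinery via the rationality theorem. It is precisely this reliance on Kodaira-type vanishing and on the existence of ample line bundles that makes the whole argument rigidly algebraic and motivates the new techniques required in the K\"ahler setting.
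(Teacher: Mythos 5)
Your proposal is correct and follows exactly the route the paper itself indicates: Theorem \ref{theoremcontraction} is quoted there as standard background (with reference to \cite{KMM87}, \cite{KM98}), and the paper explicitly notes that its proof proceeds by producing a nef supporting divisor and applying the basepoint-free theorem \ref{theorembpf}, which is precisely your rationality-threshold plus Stein-factorization argument. Your closing remark about the rigidly algebraic nature of the vanishing/ample-divisor machinery also matches the paper's own motivation for developing new K\"ahler techniques.
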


The statements of Theorem \ref{theoremNE} and Theorem \ref{theoremcontraction} make sense 
in the more general setting of compact K\"ahler spaces, and one expects them to hold in arbitrary dimension.
However we will see in Section \ref{sectionMMP} that even for threefolds this requires a substantial 
amount of work. The first reason is that Kleiman's theorem \eqref{kleiman} is not true for arbitrary classes on non-algebraic 
K\"ahler manifolds, but in order for the MMP to exist it should hold for the canonical class. 
The second reason is hidden in the proof of Theorem \ref{theoremcontraction}: a morphism $\holom{\varphi}{X}{Y}$ between projective varieties is always defined by some globally generated line bundle, so the natural way 
to prove Theorem \ref{theoremcontraction} is to give sufficient conditions for a line bundle to be semiample.
This is achieved by the basepoint free theorem:

\begin{theorem} \label{theorembpf} 
Let $X$ be a normal $\Q$-factorial projective variety with  terminal singularities. 
Let $L$ be a nef line bundle such that $L-K_X$ is nef and big. Then $mL$ is generated by its global sections for all $m \gg 0$.
\end{theorem}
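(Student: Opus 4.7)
My plan is to follow the classical Kawamata--Shokurov strategy: combine the non-vanishing theorem with a tie-breaking/multiplier-ideal argument driven by Kawamata--Viehweg vanishing, and close by Noetherian induction.

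First I would establish non-vanishing, i.e.\ that $H^0(X, m_0 L) \neq 0$ for some $m_0 \gg 0$. The input is that $L - K_X$ is nef and big, so for any $m \geq 1$ the class $mL - K_X = (m-1)L + (L - K_X)$ is also nef and big. By Kawamata--Viehweg vanishing, $H^i(X, mL) = 0$ for $i > 0$, hence $h^0(X, mL) = \chi(X, mL)$. Asymptotic Riemann--Roch combined with Shokurov's non-vanishing argument (slipping a small rational multiple of an ample divisor into the big part and running an induction on dimension using a general member of the linear system) then produces a section for some large $m_0$.

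Next I would deduce base-point freeness at an arbitrary point $x \in X$. Pick $D \in |m_0 L|$ and pass to a log resolution $\mu : \widetilde{X} \to X$ of $(X, D)$. By the tie-breaking trick, I can perturb $cD$ (for a carefully chosen rational $c < 1$) by a small multiple of an auxiliary divisor so that the resulting pair has a unique non-klt place whose center $Z \subsetneq X$ meets any preassigned neighborhood of $x$. Writing $\sJ$ for the associated multiplier ideal, I have a short exact sequence
\begin{equation*}
0 \lra \sJ \otimes \sO_X(mL) \lra \sO_X(mL) \lra \sO_Z(mL) \lra 0,
\end{equation*}
and Kawamata--Viehweg vanishing for klt pairs gives $H^1(X, \sJ \otimes \sO_X(mL)) = 0$ as soon as $mL - K_X - cD$ (which equals $(L - K_X) + (m - cm_0)L$) is nef and big, i.e.\ for all $m \gg 0$. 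Thus every section of $mL|_Z$ lifts to $X$.

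Finally I would show $mL|_Z$ is globally generated near $x$ for $m \gg 0$ and conclude by Noetherian induction. Because $Z$ is a proper subvariety, an induction on dimension applies: after passing to a resolution of $Z$, the restricted situation again satisfies a nef + (nef and big) hypothesis, so the basepoint free theorem holds there and produces a section of $mL|_Z$ not vanishing at $x$. Lifting gives a section of $mL$ nonzero at $x$, so the base locus $\Bs|mL|$ is a proper closed subset; running the same argument on its irreducible components (the Noetherian step) and combining two multiples $mL$ and $m'L$ with coprime $m, m'$ produces the uniform statement for all $m \gg 0$. The main obstacle is the tie-breaking step: one must arrange the multiplier ideal so that its cosupport is a proper subvariety passing through $x$ with a \emph{single} non-klt center, which requires delicate perturbation of $cD$ by effective divisors in $|kL|$ (these exist by the non-vanishing step) and is where the assumption that $L - K_X$ is big -- not merely nef -- is essential.
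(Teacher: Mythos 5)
The paper does not prove this statement: it records the basepoint free theorem as classical background from the algebraic MMP, citing \cite{KMM87} and \cite{KM98}. Your sketch is exactly the standard Kawamata--Shokurov argument found in those references (non-vanishing, tie-breaking with Kawamata--Viehweg/Nadel vanishing to lift sections from a non-klt center, then Noetherian induction and the coprime-multiples trick), and its outline is correct.
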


This very important technical result will not be of any help if we want 
to consider general K\"ahler spaces: the existence of a line bundle $L-K_X$ that
is nef and big implies that $X$ is Moishezon. Yet Moishezon spaces (with rational singularities) that
are K\"ahler are always projective \cite{Nam02}. Thus an important part of the work will be to find new ways to prove the contraction theorem.

While the cone and contraction theorem need completely new proofs in the K\"ahler setting, other parts of the minimal model
program can be easily generalised from the projective case: since we always contract $K_X$-negative extremal rays, the
contractions are {\em projective} morphisms polarised by $-K_X$. Moreover the existence of flips in dimension three
was proven by Mori \cite{Mor88} and Shokurov \cite{Sho92} in the setting of complex spaces, so we can use directly their results.

The notion of $\mathbb Q-$factoriality is important to run the MMP. 
For a non-projective complex space the canonical sheaf might not be a $\mathbb Q$-divisor,
so this requires a little care:

\begin{definition} Let $X$ be a normal compact complex space. 
We say that $X$ is $\mathbb Q$-factorial, if every Weil divisor is $\mathbb Q$-Cartier 
{\em and} there is a number $m \in \N$ such that the coherent sheaf $\sO_X(m K_X) = (\omega_X^{\otimes m})^{**}$
is locally free. 
\end{definition}

\section{K\"ahler spaces and the generalised Mori cone $\NAX$} \label{sectionkaehler}

In this section we introduce the cone $\overline{NA}(X)$ on a compact K\"ahler space $X$ which plays the role of the Mori cone 
$\overline{NE}(X)$ in the algebraic setting. 
The notion of a singular K\"ahler space was first introduced by Grauert \cite{Gra62}. 

\begin{definition} \label{definitionkaehler}
An irreducible and reduced complex space $X$ is K\"ahler if there exists a K\"ahler form $\omega$, i.e. a positive closed real $(1,1)$-form $\omega \in \mathcal A_\R^{1,1}(X)$ such that the following holds: for every point
$x \in X_{\sing}$ there exists an open neighbourhood $x \in U \subset X$ and a closed embedding $i_U: U \subset V$ into an open set $V \subset \mathbb C^N$, and  a strictly plurisubharmonic $C^{\infty}$-function $f : V \rightarrow \C$ with 
$ \omega|_{U \cap X_{\nons}} = (i \partial \overline \partial f)|_{U \cap X_{\nons}}$. 
\end{definition} 

For the notion and basic properties of $(p,q)-$forms and currents on singular spaces we refer to \cite{Dem85}. 
We will denote by $\sA^{p,q}_X$ the sheaf of $(p,q)$-forms, and by $\sD^{p,q}_X$ the sheaf of currents of bidegree $(p,q)$.
The relevant cohomology theory on K\"ahler spaces is the Bott-Chern cohomology, see 
 \cite[Defn. 4.6.2]{BEG13}:

\begin{definition} 
Let $X$ be a normal complex space. Let $\sH_X$ be the sheaf of real parts of holomorphic
functions multiplied with $i$. A $(1,1)$-form (resp. $(1,1)$-current) with local potentials on $X$ is a global section
of the quotient sheaf $\sA^{0,0}_X/\sH_X$ (resp. $\mathcal D^{0,0}_X/\sH_X$). Then the Bott-Chern cohomology is defined as 
$$
H^{1,1}_{\rm BC}(X) := H^1(X, \sH_X).
$$
\end{definition}

\begin{remark} 
An element of the Bott-Chern cohomology group can be viewed as a closed $(1,1)$-form with local potentials
modulo all the forms that are globally of the form $dd^c u$.  
Alternatively, we can also use 
 $(1,1)$-currents with local potentials to define Bott-Chern cohomology. 
 \end{remark}

To make the analogy to the projective case clearer, we define

\begin{definition} 
Let $X$ be a normal compact complex space in the Fujiki class $\mathcal C$. Then 
$$ N^1(X) := H^{1,1}_{\rm BC}(X).  $$ 
\end{definition}

The algebraic definition deals with classes of divisors; however in the non-algebraic setting there are too few divisors, so that this
space is too small to be useful. If $X$ has rational singularities - which will always be the case in our considerations - then 
$$ N^1(X) \subset H^2(X,\mathbb R).$$ 
Even in the algebraic case, the new space $N^1(X) $ can be larger than the traditional space $N^1(X) = NS(X) \otimes \mathbb R.$

If $X$ is projective, then the space $N_1(X)$ is the subspace of $H_2(X,\mathbb R)$ generated by classes of curves. Again we need a more general definition here.

\begin{definition} 
Let $X$ be a normal compact complex space in class $\mathcal C.$ We define $N_1(X) $ to be the vector space of real closed currents of bidimension $(1,1)$ modulo the
 following equivalence relation: 
 $ T_1 \equiv  T_2 $ if and only if
 $$ T_1(\eta) = T_2(\eta)$$
 for all real closed $(1,1)$-forms $\eta$ with local potentials.  
 \end{definition}
 
 In this setting, the analytic counterpart of the Mori cone $\overline{NE}(X)$  in the projective case is given by
 
 \begin{definition} Let $X$ be a normal compact complex space in class $\mathcal C.$ 
Then $\NAX \subset  N_1(X)$ is the closed cone generated by the classes of positive closed currents. 
The Mori cone is the closed subcone
 $$ \NEX \subset \NAX $$
generated by those positive closed currents arising as currents of integration over curves. 
 \end{definition} 
 
If $X$ is projective, $\NEX$ is just the usual Mori cone of curves. However, even if $X$ is a projective manifold, 
the cone $\NAX$ can be larger than $\NEX$, namely when $\rho(X) < h^{1,1}(X)$. 
 
 \begin{definition} Let $X$ be a normal compact complex space and $u \in H^{1,1}_{\rm BC}(X).$
 \begin{enumerate} 
 \item $u$ is pseudo-effective, if $u$ can be represented by a current $T \in \mathcal D^{1,1}(X)$ which is locally of the form  $\partial \overline{\partial} \varphi $
 with a plurisubharmonic function $\varphi.$ 
 \item Then $u$ is nef if $u$ can be represented by a form $\alpha$ with local potentials such that for some positive $(1,1)-$form $\omega $ on $X$ and
for every $\epsilon > 0$ there exists a $C^{\infty}-$function  $f_{\epsilon} $ such that
$$ \alpha + i \partial \overline{\partial} f_{\epsilon} \geq - \epsilon \omega.$$
\item  ${\rm Nef}(X) \subset N^1(X)$ is the cone generated by nef cohomology classes.
\end{enumerate}
\end{definition} 

The notion of nef divisors/classes is central for the MMP, so let us explain what the slightly technical definition above means.
For a K\"ahler space nef classes are limits of positive (i.e. K\"ahler) classes:

\begin{theorem}  \cite[Prop.6.1.iii)]{Dem92}
Let $X$ be a normal compact K\"ahler space. Then 
${\rm Nef}(X)$ is the closure of the K\"ahler cone 
\end{theorem}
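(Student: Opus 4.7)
The plan is to prove the two inclusions separately. Throughout the argument I fix an auxiliary K\"ahler form $\omega$ on $X$ to serve both as the reference form appearing in the definition of nefness and as a ``small perturbation'' used to move classes into the K\"ahler cone.

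First I would show that the closure of the K\"ahler cone is contained in $\mathrm{Nef}(X)$. Let $[\alpha]\in H^{1,1}_{\mathrm{BC}}(X)$ lie in this closure. For every $\epsilon>0$ the class $[\alpha]+\epsilon[\omega]$ is then K\"ahler, hence represented by some genuine K\"ahler form $\omega_\epsilon$. Since two global $(1,1)$-forms with local potentials represent the same Bott-Chern class if and only if they differ by $i\partial\bar\partial f$ for some globally defined $f\in\sA^{0,0}(X)$, there exists a smooth $f_\epsilon$ with
$$
\omega_\epsilon \;=\; \alpha+\epsilon\omega + i\partial\bar\partial f_\epsilon.
$$
Rearranging yields $\alpha+i\partial\bar\partial f_\epsilon = \omega_\epsilon-\epsilon\omega \geq -\epsilon\omega$, which is precisely the nefness condition.

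For the reverse inclusion, suppose $[\alpha]$ is nef and fix a representative $\alpha$ as in the definition. The defining inequality furnishes for each $\epsilon>0$ a smooth function $f_\epsilon$ with $\alpha+i\partial\bar\partial f_\epsilon\geq -\epsilon\omega$. Adding $2\epsilon\omega$ to both sides,
$$
\alpha+2\epsilon\omega+i\partial\bar\partial f_\epsilon \;\geq\; \epsilon\omega,
$$
so the left hand side is a strictly positive closed $(1,1)$-form with local potentials, i.e.\ a K\"ahler form on $X$ in the sense of Definition~\ref{definitionkaehler}. Hence $[\alpha]+2\epsilon[\omega]$ lies in the K\"ahler cone for every $\epsilon>0$, and letting $\epsilon\to 0$ places $[\alpha]$ in its closure.

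The only step that uses anything beyond formal manipulation of positivity is the $\partial\bar\partial$-lemma implicit in the identification $H^{1,1}_{\mathrm{BC}}(X)=H^1(X,\sH_X)$, invoked above to upgrade equality of Bott-Chern classes to a global $i\partial\bar\partial$-equivalence with a \emph{smooth} potential. On a compact K\"ahler manifold this is classical Hodge theory; on a singular normal K\"ahler space it follows from the fineness of $\sA^{0,0}_X$ applied to the short exact sequence $\sH_X\hookrightarrow\sA^{0,0}_X\to\sA^{0,0}_X/\sH_X$, together with Grauert's local embedding framework for smooth functions and plurisubharmonic potentials at singular points as set up in \cite{Dem85}. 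This is where one has to be careful with the singularities; once this ingredient is available, the rest of the argument is the purely formal comparison of the two displayed inequalities.
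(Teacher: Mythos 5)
The paper does not prove this statement; it is imported verbatim from \cite[Prop.~6.1.iii)]{Dem92}, so there is no internal proof to compare against. Your argument is the standard one and its formal skeleton is correct: the two displayed inequalities are exactly the right comparison, and the reduction of ``same Bott--Chern class'' to ``differ by a global $i\partial\bar\partial f$'' via the fine resolution $\sH_X\hookrightarrow\sA^{0,0}_X\to\sA^{0,0}_X/\sH_X$ is the right mechanism and matches the paper's own description of $H^{1,1}_{\rm BC}$.

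There are, however, two places where the argument silently uses non-formal input, and you have mislocated the real difficulty. First, in the inclusion $\overline{\mathcal K}\subseteq{\rm Nef}(X)$ the assertion that $[\alpha]+\epsilon[\omega]$ is K\"ahler for $[\alpha]\in\overline{\mathcal K}$ is the cone identity $\overline{\mathcal K}+\mathcal K\subseteq\mathcal K$, which requires the K\"ahler cone to be \emph{open} in $H^{1,1}_{\rm BC}(X)$; on a normal singular space this is true but is itself a statement needing proof (finite-dimensionality of $H^{1,1}_{\rm BC}$ plus a uniform bound on representatives of small classes), not a formal consequence of convexity. Second, and more seriously, in the reverse inclusion you declare that a closed $(1,1)$-form with local potentials satisfying $\alpha+2\epsilon\omega+i\partial\bar\partial f_\epsilon\geq\epsilon\omega$ \emph{is} a K\"ahler form ``in the sense of Definition~\ref{definitionkaehler}''. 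That definition demands, at each singular point, a local potential that extends to a \emph{strictly plurisubharmonic smooth function on an ambient open set of} $\C^N$. Your local potential is $u+2\epsilon v+f_\epsilon$ with $v$ admitting such an extension but $u$ and $f_\epsilon$ only extending to arbitrary smooth ambient functions; positivity of the restricted Hessian along $X_{\nons}$ does not imply strict plurisubharmonicity of the ambient extension, and repairing this (by gluing, regularized maxima, or Varouchas-type arguments) is precisely the content hidden in the singular case. Your closing paragraph attributes the singular-space subtlety to the global $\partial\bar\partial$-exactness step, which is in fact the unproblematic part; the genuine issue is the equivalence between ``strictly positive form with local potentials'' and ``K\"ahler form in Grauert's sense''. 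With those two points supplied, the proof is complete.
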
 

More geometrically we know that a $\Q$-Cartier divisor $u$ on a normal projective variety $X$
is nef if $u \cdot C \geq 0$ for all irreducible curves $C \subset X$.
On a non-algebraic K\"ahler space such a divisor can even be antinef!
Indeed if $X$ is a compact K\"ahler surface of algebraic dimension one, then
we have an elliptic fibration $\varphi: X \rightarrow B$ onto a projective curve $B$.
Set $u := - \varphi^* H$ with $H$ an ample divisor on $B$, so $u$ is antinef.
However all the curves $C \subset X$ are contracted by $\varphi$, so we have
$u \cdot C = 0$ for all curves $C \subset X$.
We will explain in Section \ref{sectionMMP} that this phenomena should not happen when $u = c_1(K_X)$.

In the projective setting, the nef cone (i.e., the closure of the ample cone) and the Mori cone $\NEX$ are dual. 
Here is the analogue in the K\"ahler case: given a real closed $(1,1)$-form $\omega$ on $X$ with local potentials, we define 
$$
\lambda_\omega \in N_1(X)^*, \ [T] \mapsto T(\omega).
$$
Notice that if  $T(\omega)=0$ for all  closed currents $T$ of bidimension $(1,1)$, then $\lambda_\omega=0$. Thus we obtain well-defined canonical map
$$
\Phi: N^1(X) \to N_1(X)^*, \ [\omega] \mapsto \lambda_\omega.
$$
 
 \begin{theorem} Let $X$ be a normal compact complex space in class $\mathcal C$ with only rational singularities. 
 \begin{enumerate}
\item $\Phi$ is an isomorphism.
\item If $\dim X = 3,$ then  $\Phi({\rm Nef}(X)) = \NAX.$
\end{enumerate} 
\end{theorem}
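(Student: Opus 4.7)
The plan is to split this into (1) a perfect-pairing / dimension-count argument for the isomorphism $\Phi$, and (2) an approximation argument combined with a Kleiman-type numerical criterion for the cone duality.

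For (1), I would first observe that the pairing $N^1(X) \times N_1(X) \to \R$ sending $([\omega],[T])$ to $T(\omega)$ is well-defined on the right by the very definition of $N_1(X)$, so that $N_1(X)$ injects into $N^1(X)^*$. The map $\Phi$ is the dual injection, and the main content is to show that it is injective as well: if a Bott-Chern class $[\omega]$ pairs trivially with every closed $(1,1)$-current, then one wants to conclude that $\omega = dd^c u$ globally, i.e.\ that $[\omega] = 0$ in $H^{1,1}_{\rm BC}(X)$. This is an instance of the $\partial\overline{\partial}$-lemma in the singular K\"ahler category; for compact K\"ahler spaces with rational singularities I would proceed by passing to a resolution, applying the classical $\partial\overline{\partial}$-lemma, and descending, using that rational singularities control the relevant cohomology. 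Once $\Phi$ is injective, finite dimensionality of $H^{1,1}_{\rm BC}(X)$ (embedded in $H^2(X,\R)$ thanks to rational singularities) together with the two-sided injection forces $\Phi$ to be a linear isomorphism by a dimension count.

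For (2), one inclusion is essentially formal: if $\alpha$ is nef and $T$ is a positive closed current of bidimension $(1,1)$, then for every $\varepsilon > 0$ one can choose a smooth $f_\varepsilon$ with $\alpha + i\partial\overline{\partial} f_\varepsilon \geq -\varepsilon\omega$ pointwise, and since $T$ is closed one has $T(\alpha) = T(\alpha + i\partial\overline{\partial} f_\varepsilon) \geq -\varepsilon T(\omega)$; letting $\varepsilon \to 0$ gives nonnegativity, so $\Phi(\mathrm{Nef}(X))$ lies in the cone of functionals nonnegative on $\NAX$. The real content is the converse direction, a numerical criterion for nefness: any class pairing nonnegatively with every positive closed bidimension-$(1,1)$ current must be nef. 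I would attack this by contraposition, showing that if $\alpha$ lies outside $\mathrm{Nef}(X)$ one can concentrate mass to produce a positive closed current $T$ with $T(\alpha) < 0$. This is a Demailly--Paun style argument, whose threefold incarnation draws on the Kähler-cone characterisation on smooth K\"ahler threefolds together with a careful passage through a resolution to handle the $\Q$-factorial terminal singularities.

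The decisive step is precisely the construction of the obstructing current in (2): the algebraic approach, producing an obstructing curve via Mori's bend-and-break, is unavailable in the K\"ahler category, and the replacement — a mass concentration argument yielding a positive closed current of bidimension $(1,1)$ — requires the full strength of Demailly--Paun together with decomposition results for pseudo-effective classes that are currently only available up to dimension three. This is the principal obstacle I expect, and it also explains why (2) carries the restriction $\dim X = 3$ whereas (1) is dimension-free.
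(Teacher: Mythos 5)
The paper states this theorem without proof (it is quoted from \cite{HP16}), so there is no in-text argument to compare against; judged against the actual proof in the source and the hints the paper gives elsewhere (notably the proof of Theorem \ref{theoremNA}), your outline follows essentially the same route. For (1), the two-sided-injection-plus-dimension-count scheme is right: the right kernel of the pairing is trivial by the very definition of $N_1(X)$, and the left kernel (injectivity of $\Phi$) is handled exactly as you say, by the $\partial\overline{\partial}$-lemma and Poincar\'e duality on a resolution, with rational singularities ensuring that $N^1(X)$ sits inside $H^2(X,\R)$ and that the descent works. For (2), the easy inclusion is the regularisation argument you give, and the hard inclusion is indeed Demailly--P\u aun; concretely one invokes \cite[Cor.~0.3]{DP04}, which identifies the dual cone of ${\rm Nef}$ on a compact K\"ahler \emph{manifold} as the closed cone generated by $[\omega]^{n-1}$, $[\omega]^{p-1}\cdot[Y]$ and $[C]$, all of which lie in $\NAX$, so biduality of closed convex cones finishes the argument. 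Two small corrections: the hypothesis is rational singularities, not $\Q$-factorial terminal ones; and the restriction $\dim X=3$ does not come from any missing decomposition theory for pseudoeffective classes (the Demailly--P\u aun corollary is dimension-free on manifolds) but solely from the problem of transporting that statement from a resolution down to the singular space $X$, which at the time of writing was carried out only for threefolds --- this is exactly why the paper remarks that the assumption ``should be superfluous'' and points to \cite{CT16}.
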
 

The assumption on the dimension in the last statement should be superfluous.
Since the MMP is an iteration of morphisms and flips one has to be able to check that at every
step we remain in the category of K\"ahler spaces. Here is an example of such a criterion:

\begin{lemma} \label{lemmakaehler}
Let $X$ be a normal $\Q$-factorial compact K\"ahler threefold, and let $\holom{\varphi}{X}{X'}$ be a 
bimeromorphic Mori contraction. Let $\alpha$ be a nef class on $X$ such that the following holds:
$\alpha^3>0$, $\alpha^2 \cdot S = 0$ if and only if the surface $S$ is contracted
by $\varphi$ and $\alpha \cdot C = 0$ if and only if the curve $C$ is contracted by $\varphi$. Then 
$\alpha = \varphi^* \alpha'$ with $\alpha'$ a K\"ahler class on $X'$. In particular $X'$ is K\"ahler.
\end{lemma}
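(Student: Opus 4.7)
The plan is to split the argument into two steps: (i) exhibit a descent $\alpha = \varphi^*\alpha'$ of $\alpha$ to a Bott--Chern class on $X'$, and (ii) verify that $\alpha'$ is K\"ahler using a Demailly--P\u{a}un-type numerical criterion for compact K\"ahler threefolds with rational singularities.

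For the descent in step (i), I would use that $\varphi$ is a bimeromorphic Mori contraction --- hence a projective morphism polarised by $-K_X$, with fibres of dimension at most two --- and exploit the Leray spectral sequence for the sheaf $\sH_X$ under $\varphi$. Rational singularities on both $X$ and $X'$ produce the necessary vanishing of higher direct images, while the hypothesis $\alpha \cdot C = 0$ on every $\varphi$-contracted curve translates into the triviality of the fibrewise restriction of $\alpha$ and places $\alpha$ in the image of $\varphi^*: H^1(X', \sH_{X'}) \to H^1(X, \sH_X)$. This produces the required class $\alpha' \in N^1(X')$ with $\varphi^*\alpha' = \alpha$.

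For step (ii), I would first check that $\alpha'$ is nef: any irreducible curve $C' \subset X'$ is dominated by some curve $C \subset X$, which by hypothesis is not $\varphi$-contracted, and the projection formula $\alpha \cdot C = \deg(C/C') \, \alpha' \cdot C'$ gives $\alpha' \cdot C' \geq 0$. I would then invoke the singular analogue of the Demailly--P\u{a}un criterion: a nef class on a compact K\"ahler threefold with rational singularities is K\"ahler as soon as its top self-intersection on every irreducible analytic subvariety $V \subset X'$ is strictly positive. For $V = X'$, the projection formula gives $\alpha'^3 = \alpha^3 > 0$. For a surface $V \subset X'$, its strict transform $\tilde V \subset X$ is not $\varphi$-contracted, so $\alpha'^2 \cdot V = \alpha^2 \cdot \tilde V > 0$ by the hypothesis on $\alpha$. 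For a curve $V = C' \subset X'$, any curve $C \subset X$ dominating $C'$ is not contracted, so $\alpha \cdot C > 0$ and hence $\alpha' \cdot C' > 0$.

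The delicate point is clearly the descent in step (i). In the algebraic setting the analogous result follows from base-point-freeness and rigidity for line bundles, but here $\alpha$ is only a Bott--Chern class with local potentials, so there are no global sections to work with and the argument is forced to proceed cohomologically via $\sH_X$. This cohomological descent, which replaces the usual global-sections approach of the projective theory, is precisely where the extra K\"ahler-geometric input is required.
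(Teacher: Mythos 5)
Your step (ii) contains a circularity that the paper's proof is specifically designed to avoid. The numerical criterion you invoke --- ``a nef class on a compact K\"ahler threefold with rational singularities is K\"ahler as soon as its top self-intersection on every irreducible subvariety is positive'' --- is a statement \emph{about compact K\"ahler spaces}: the Demailly--P\u{a}un characterisation identifies the K\"ahler cone as a component of the positive cone, and its proof (via mass concentration and Yau's theorem) presupposes the existence of a K\"ahler metric on the ambient space. But the whole point of the lemma is to establish that $X'$ is K\"ahler; a priori $X'$ is only a normal compact complex space in the Fujiki class $\mathcal C$. For such spaces the full numerical criterion is not available (the paper itself flags, in a footnote to Theorem \ref{theoremNA}, that even the related \cite[Cor.0.3]{DP04} is only ``expected'' to hold for singular spaces, with \cite{CT16} recording partial progress). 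So as written, your argument assumes what it is trying to prove.

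The paper circumvents this by working with K\"ahler \emph{currents}, which, unlike K\"ahler forms, can be transported through $\varphi$: since $\alpha^3>0$ the class $\alpha$ is nef and big, and by \cite{CT15} together with Boucksom's divisorial Zariski decomposition one can choose a K\"ahler current $T\in\alpha$ whose Lelong level sets lie in the null locus of $\alpha$, i.e.\ in the $\varphi$-exceptional locus. The push-forward $\varphi_*T$ is then a K\"ahler current in $\alpha'$ with Lelong level sets contained in the image of the exceptional locus, which in dimension three is a finite set of points or a curve $C'$. One then applies the \emph{gluing} criterion \cite[Prop.3.3]{DP04} (see also \cite{CT16}): a class containing a K\"ahler current whose restriction to the Lelong level sets is K\"ahler is itself K\"ahler. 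This reduces the verification to the single intersection number $\alpha'\cdot C'>0$, which is obtained exactly as in your curve case via a multisection of the projective morphism $\fibre{\varphi}{C'}\to C'$. Your intersection-theoretic checks on surfaces and curves of $X'$ are correct as far as they go, and your descent step (i) is in the spirit of the reference \cite[Sect.3]{HP16}; the missing idea is the construction of a K\"ahler current on $X'$ by push-forward, which is what supplies the positivity input that the numerical criterion alone cannot.
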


\begin{proof}
It is easy to see that $\alpha = \varphi^* \alpha'$ with $\alpha' \in N^1(X')$ (cf. \cite[Sect.3]{HP16}), the interesting part
is to show the K\"ahler property: since $\alpha^3>0$ the class $\alpha$ is nef and big \cite{DP04} 
and as a consequence of \cite{CT15, Bou04}, applied on a resolution of singularities, we can choose a K\"ahler current $T$ in $\alpha$ such that its Lelong level sets are in the $\varphi$-exceptional locus. The push-forward $\varphi_* (T)$ defines a K\"ahler current $T'$ in $\alpha'$ such that
its Lelong level sets are in the image of the $\varphi$-exceptional locus. By \cite[Prop.3.3]{DP04}, see also \cite{CT16}, we are done
if we show that the restriction of $\alpha'$ to all these sets is a K\"ahler class.
Yet $X$ has dimension three, so the image of the exceptional locus is either a union of points or a curve $C' \subset X'$.
Now observe the following: the map $\fibre{\varphi}{C'} \rightarrow C'$ is a projective morphism
over a projective variety, so it has a multisection $C$. By the projection formula $\alpha' \cdot C'$
is a positive multiple of $\alpha \cdot C$ which (by our assumption on intersections)
is positive. Thus $\alpha'|_{C'}$ is a K\"ahler class. 
\end{proof}
 
\begin{remark} In \cite{HP16} the K\"ahler property was shown by a more general theorem \cite[Thm.3.18]{HP16}
involving only conditions on $\NAX$. S.Boucksom pointed out that for the fact
in Step 2 of the proof that $T_{\infty}(E) < 0$, some additional arguments should be given. 
We know in the situation of Step 2 of Theorem 3.18  that 
$[ \mu_*(T_{\infty})] = 0.$ 
Now we may write 
$$ [T_{\infty} ] = [i_*(R)] $$
with a current $R$ supported on the exceptional divisor $E$. This is easily seen since there is a class $b \in H_2(E)$ such that $i_*(b)$  equals the class of
$[T_{\infty} ] $ in $H_2(\hat X).$ Decompose 
$ R = R^+ - R^- $
with positive closed currents $R^+$ and $R^-$ 
and define  a positive closed
$ \hat T = T_{\infty} + i_*(R^-).$
Notice
$ \mu_*(i_*(R^+)) = 0$ and 
let $\pi: \tilde X \to \hat X$ be a sequence of blow-ups such that $\tilde X$ is a compact K\"ahler manifold. 
Let $\tilde S \subset \tilde X$ and $\hat S \subset \hat X$ be the exceptional loci. 
Now  Step 1 of 3.18 is easily adapted to yield
$$  \chi_{\hat X \setminus \hat S} \hat T = 0. $$
Consequently, $T_{\infty} $ is supported on $\hat S,$ hence $T_{\infty}(E) < 0$.
 \end{remark} 
 
\section{MMP for K\"ahler threefolds} \label{sectionMMP}

\subsection{Existence of rational curves, bend-and-break} \label{subsectionrational}

In this whole section we denote by $X$ a $\Q$-factorial compact K\"ahler threefold with terminal singularities, for the outlines
of the proofs we will assume implicitly that $X$ is {\em smooth}. We also suppose:
\begin{equation} \label{hypothesis1}
\mbox{The canonical class $K_X$ is not nef.}
\end{equation}
One of the first fundamental contributions of Mori was to translate this numerical property into a geometric 
statement:

\begin{theorem} \cite{Mor79, Mor82}
Let $X$ be a projective manifold such that $K_X$ is not nef. Then there exists a {\em rational} curve $C \subset X$
such that $-K_X \cdot C>0$. 
\end{theorem}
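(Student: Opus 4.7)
The plan is to follow Mori's bend-and-break argument: produce some curve with $K_X \cdot C_0 < 0$ on $X$, spread the situation out over $\mathrm{Spec}\,\Z$, amplify the degree in positive characteristic using iterated Frobenius, apply the standard deformation-theoretic dimension estimate on $\Hom(C, X)$ to obtain a family of morphisms fixing two points, use bend-and-break to extract a rational curve, and finally descend back to characteristic zero by constructibility.

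First, since $X$ is projective and $K_X$ is not nef, Kleiman's criterion gives an irreducible curve $C_0 \subset X$ with $K_X \cdot C_0 < 0$. Passing to the normalisation furnishes a morphism $f: C \to X$ from a smooth projective curve still satisfying $-K_X \cdot f_*[C] > 0$. Spread $X$, $f$ and some ample class $H$ out over a finitely generated $\Z$-algebra $A$; for almost every closed point $s \in \mathrm{Spec}\,A$ the triple $(X_s, f_s, H_s)$ is defined over a residue field of characteristic $p = p(s) > 0$ and retains $-K_{X_s} \cdot f_{s*}[C_s] > 0$.

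Now apply the Frobenius trick. Composing $f_s$ with the $n$-th iterate of the relative Frobenius yields $f_s^{(n)}: C_s^{(p^n)} \to X_s$ with $-K_{X_s} \cdot f_{s*}^{(n)}[C_s^{(p^n)}] = p^n \cdot (-K_{X_s} \cdot f_{s*}[C_s])$, which is unbounded in $n$. The standard dimension bound
$$
\dim_{[h]} \Hom(C_s^{(p^n)}, X_s) \ \geq \ -K_{X_s} \cdot h_*[C_s^{(p^n)}] + (1 - g_C)\dim X
$$
then allows one to fix the images of two chosen points of $C_s^{(p^n)}$ (which costs $2\dim X$ in dimension) while still retaining a positive-dimensional family of such morphisms once $n$ is large enough.

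The essential geometric content, and the step I expect to be the main obstacle, is bend-and-break itself. A positive-dimensional family of morphisms $C_s^{(p^n)} \to X_s$ pinning two points cannot be proper: on the ruled surface $\overline{T} \times C_s^{(p^n)}$ the evaluation map contracts the two sections through the pinned points to distinct closed points, and intersecting the pullback of $H$ with those sections forces a numerical contradiction unless the family degenerates. The limiting stable map in the Kontsevich compactification then splits off a rational component through at least one of the two pinned points, producing a rational curve $R_s \subset X_s$; choosing it of minimal $H$-degree yields a uniform bound on $H \cdot R_s$ and, again by minimality, $-K_{X_s} \cdot R_s > 0$. Since the existence of a rational curve of $H$-degree at most a fixed integer is a constructible condition on $\mathrm{Spec}\,A$, this conclusion descends from a dense set of closed points to the generic fibre and gives the desired rational curve $R \subset X$ with $-K_X \cdot R > 0$.
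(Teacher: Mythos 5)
Your strategy is precisely the one the paper alludes to when it cites \cite{Mor79, Mor82} (the paper gives no proof of its own, only the remark that Mori's argument uses deformation theory of curves and reduction to characteristic $p$), and the first steps --- producing a $K_X$-negative curve from non-nefness, spreading out over a finitely generated $\Z$-algebra, amplifying the degree by Frobenius while the genus term in the estimate $\dim_{[h]}\Hom(C,X)\geq -K_X\cdot h_*[C]+(1-g)\dim X$ stays fixed, and breaking the resulting family --- are correct in outline. (A minor point: for a source curve of genus $g\geq 1$ the first bend-and-break only requires pinning one point, and the ``ruled surface plus intersection with $H$'' contradiction you describe is really the mechanism of the \emph{second} bend-and-break lemma on $\PP^1$, where $N^1(\overline T\times\PP^1)$ has rank two; for a general curve $C$ one instead rules out the case where the evaluation map extends to a morphism of $\overline T\times C$ by observing that a generically finite morphism from a smooth surface contracts only curves of negative self-intersection, while the pinned section has self-intersection zero.)

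The genuine gap is in the final step. Bend-and-break hands you \emph{some} rational curve $R_s$ through the pinned point, with no numerical information attached, and neither of the two properties you then need follows from ``choosing $R_s$ of minimal $H$-degree'': (i) the minimal $H$-degree of a rational curve through a point is not a priori bounded independently of $s$ (hence of $p$), and a $p$-dependent bound ruins the constructibility descent; (ii) minimality certainly does not imply $-K_{X_s}\cdot R_s>0$ --- the cheapest rational curve through $x$ may perfectly well be $K$-trivial or $K$-positive (think of a $(-2)$-curve on a surface passing through $x$). Mori closes both gaps by bookkeeping through the degeneration: the broken fibre remains numerically equivalent to $p^n f_{s*}[C_s]$, so the total cycle stays $K$-negative, and one iterates the breaking, applying the second bend-and-break lemma to any rational component with $-K_{X_s}\cdot R\geq \dim X+2$, until one extracts a rational curve with $0<-K_{X_s}\cdot R\leq\dim X+1$ together with the explicit inequality $H_s\cdot R\leq 2\dim X\,(H_s\cdot C_s)/(-K_{X_s}\cdot C_s)$, whose right-hand side is visibly independent of $p$; see \cite[II.5]{Kol96}. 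Without this extra layer your argument produces rational curves in characteristic $p$ but yields neither the $K_X$-negativity nor the uniform degree bound needed to descend to characteristic zero.
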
 

Mori's proof uses deformation theory of curves and a reduction to positive characteristic in an essential way, 
and for a long time it was not clear how to generalise his statement in a non-algebraic setting. The approach 
used in \cite{HP16, HP15} is to split the problem in two parts. The first part is due to M. Brunella:

\begin{theorem} \cite{Bru06} \label{theorembrunella}
Let $X$ be a $\Q$-factorial compact K\"ahler threefold with terminal singularities.
Then $K_X$ is not pseudoeffective if and only if $X$ is covered by
rational curves.
\end{theorem}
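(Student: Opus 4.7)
The theorem asserts an equivalence, and I would prove the two directions separately.

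For the direction ``$X$ uniruled $\Rightarrow K_X$ not pseudoeffective'', I would fix a covering family $\{C_t\}_{t \in T}$ of rational curves on $X$. Through a very general point the members are free, so for a general $C = C_t$ the standard deformation argument (the normalisation $f : \PP^1 \to C$ has $f^* T_X$ generically globally generated, with $T_{\PP^1}$ sitting inside as a subbundle of degree $2$) yields $-K_X \cdot C \geq 2$. The class $[C]$ lies in the movable cone of $N_1(X)$ since the family sweeps out $X$, and the non-negative pairing between the movable cone and the pseudoeffective cone, available on compact K\"ahler manifolds via Boucksom-Demailly-Paun-Peternell-type duality, forbids $K_X$ from being pseudoeffective.

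For the converse, the plan is to produce a singular holomorphic foliation $\sF \subsetneq T_X$ whose leaves are covered by rational curves, and then to deduce that $X$ itself is uniruled. Assume $K_X$ is not pseudoeffective. The first step is to apply the same duality of cones, passing to a resolution to handle the terminal singularities, to produce a movable class $\alpha$ with $K_X \cdot \alpha < 0$; on a threefold $\alpha$ can be approximated by classes of covering families of curves, so I may fix a covering family $\{C_t\}_{t \in T}$ with $K_X \cdot C_t < 0$ for general $t$. The restriction $T_X|_{C_t}$ then has strictly positive degree, so its Harder-Narasimhan filtration contains a maximal destabilising subsheaf $\sE \subset T_X|_{C_t}$ of positive slope. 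A Miyaoka-Mori style lifting, with involutivity checked via a Bogomolov-type inequality, should then produce a saturated, integrable subsheaf $\sF \subset T_X$ whose generic restriction to members of the family is $\sE$ and which satisfies $c_1(\sF) \cdot \alpha > 0$; in other words a singular holomorphic foliation with $-K_\sF$ strictly positive along the covering curve.

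The hard part will be the final step: passing from the positivity of $-K_\sF$ on a covering curve to the actual existence of rational curves in the leaves of $\sF$. In the projective category this is the theorem of Bogomolov-McQuillan, whose proof relies on reduction modulo $p$ and is therefore not available for non-algebraic K\"ahler $X$. The alternative I would attempt is a direct analytic study of the foliation: use the positivity of $-K_\sF$ to produce many holomorphic sections of $(-mK_\sF)|_{C_t}$, deform them leafwise, and apply a Brody-type hyperbolicity argument to extract non-constant holomorphic maps $\C \to L$ into a general leaf $L$ whose images are algebraic and hence rational. Once the leaves of $\sF$ are known to be uniruled, the rational curves they contain sweep out all of $X$, completing the proof.
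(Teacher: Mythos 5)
The forward direction (uniruled $\Rightarrow$ $K_X$ not pseudoeffective) is fine: a positive closed $(1,1)$-current pairs non-negatively with the generic member of a covering family, while a free rational curve has $K_X \cdot C \leq -2$. All the trouble is in the converse, and it is serious. Your first step invokes the hard direction of the movable/pseudoeffective cone duality of \cite{BDPP13} to produce a movable class $\alpha$ with $K_X \cdot \alpha < 0$, and then asserts that $\alpha$ is approximated by classes of covering families of curves. That duality is a theorem only in the projective category; its K\"ahler version is open, and the paper explicitly lists it as an outstanding step towards Conjecture \ref{conjectureBDPP}. Worse, on a non-algebraic compact K\"ahler threefold there need not exist \emph{any} covering family of curves (a general torus contains no curves at all), so ``fix a covering family $\{C_t\}$ with $K_X \cdot C_t < 0$'' presupposes essentially the uniruledness you are trying to establish. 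Finally, the Bogomolov--McQuillan step you postpone to the end is precisely the reduction-mod-$p$ bottleneck, and the proposed Brody-type analytic substitute is a hope rather than an argument.

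The paper's proof takes a different route that exploits non-algebraicity instead of fighting it. After reducing to the smooth case (for terminal singularities, pseudoeffectivity of $K_X$ is unchanged under desingularisation), one observes that a non-algebraic compact K\"ahler threefold satisfies $H^0(X,\Omega_X^2) \neq 0$ by Kodaira's projectivity criterion, and the kernel of a nonzero holomorphic $2$-form is a rank-one foliation $\sF \subset T_X$ --- no Harder--Narasimhan filtration or Miyaoka--Mori lifting is needed to produce it. Brunella's theorem \cite{Bru06}, proved by purely analytic methods (the leafwise Poincar\'e metric) with no characteristic-$p$ input, says that $K_\sF$ is not pseudoeffective if and only if the general leaf is a rational curve; comparing $K_X$ with $K_\sF$ via the $2$-form then gives the statement, while the projective case is already covered by \cite{MM86} and \cite{BDPP13}. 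You should replace your construction of the foliation and your final hyperbolicity step by this mechanism, or at least explain how your covering family of curves is to be produced on a threefold with no a priori curves.
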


First observe that we may assume $X$ to be smooth: if $\hat X \to X$ is any desingularisation, then $K_{\hat X}$ is not pseudoeffective if and only if $K_X$ is not pseudoeffective, the
singularities of $X$ being terminal.
Now Brunella proves that the canonical bundle $K_\sF$ of a rank one foliation $\sF \subset T_X$ is not pseudoeffective if and only if the general leaf of $\sF$ is a rational curve. 
Since a non-algebraic K\"ahler threefold always admits a rank one foliation defined by a holomorphic two-form (by Kodaira's theorem), this implies the statement.
For the discussion of non-uniruled spaces we may therefore replace \eqref{hypothesis1} by 
\begin{equation} \label{hypothesis2}
\mbox{The canonical class $K_X$ is pseudoeffective, but not nef.}
\end{equation}
By the theorem of Demailly-P\v aun \cite{DP04} the existence of a cohomology class that is pseudoeffective, but not nef 
yields some first geometric information: there exists a proper subvariety $Z \subsetneq X$ such that $K_X|_Z$
is not pseudoeffective. If $X$ is smooth (or at least Gorenstein\footnote{
For simplicity of the exposition, we completely ignore the substantial 
difficulties in the non-Gorenstein case, cf. \cite[Sect.4.C, Sect.5]{HP16}.}) we can focus on the case where $Z$ is a surface (hence a divisor
in the threefold $X$). Indeed if $Z$ is a curve $C$, then $K_X|_C$ being not pseudoeffective is equivalent to $K_X \cdot C < 0$. 
The deformation theory of curves on threefolds now shows that 
the deformations of $C$ cover a surface $S \subset X$ \cite[II, Thm.1.15]{Kol96}.
Then the restriction $K_X|_S$ is not pseudoeffective since $S$ is covered by $K_X$-negative curves.

\begin{lemma} \cite[Lemma 4.1]{HP16} \label{lemmafundamental}
Let $X$ be a $\Q$-factorial compact K\"ahler threefold with terminal singularities that is not uniruled. Let $S \subset X$ be an irreducible
surface such that $K_X|_S$ is not pseudoeffective. Then $S$ is covered by rational curves.
\end{lemma}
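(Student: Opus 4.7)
The plan is to pass to a resolution $\mu : \tilde S \to S$ and show that $K_{\tilde S}$ is not pseudoeffective; by the Boucksom--Demailly--P\v aun--Peternell characterization of uniruledness, which for a smooth compact K\"ahler surface is part of the Enriques--Kodaira classification, this is equivalent to $\tilde S$ being uniruled, and hence to $S$ being covered by rational curves.

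First, since $X$ is not uniruled, Theorem \ref{theorembrunella} gives that $K_X$ is pseudoeffective on $X$. Combined with the hypothesis that $K_X|_S$ is not pseudoeffective, $S$ must occur with some positive multiplicity $\lambda > 0$ in the negative part of Boucksom's divisorial Zariski decomposition of $K_X$. In particular the class $K_X - \lambda S$ remains pseudoeffective on $X$ and has vanishing generic Lelong number along $S$, so the restriction $(K_X-\lambda S)|_S$ is well-defined and pseudoeffective on $S$.

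Next, apply adjunction (in the Gorenstein case $X$ smooth, to which the exposition reduces). One has $K_S = (K_X + S)|_S$, and pulling back via $\mu$ introduces a $\mu$-exceptional correction $E$, so that
\[
K_{\tilde S} \;=\; \mu^*\bigl((K_X - \lambda S)|_S\bigr) \;+\; (\lambda + 1)\,\mu^*(S|_S) \;+\; E.
\]
The first summand is pseudoeffective by the previous paragraph, while the combination $\mu^*\bigl((K_X - \lambda S)|_S\bigr) + \lambda\,\mu^*(S|_S) = \mu^*(K_X|_S)$ is not. Hence, using BDPP duality on $\tilde S$, there exists a movable class $\alpha$ on $\tilde S$, coming from a covering family of curves (so that $E \cdot \alpha = 0$ by the projection formula), such that $\mu^*(K_X|_S) \cdot \alpha < 0$; since $\mu^*\bigl((K_X-\lambda S)|_S\bigr) \cdot \alpha \geq 0$ by pseudoeffectivity, this forces $\mu^*(S|_S) \cdot \alpha < 0$ strictly. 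The coefficient $\lambda+1$ in front of $\mu^*(S|_S)$ then exceeds the $\lambda$ appearing in $\mu^*(K_X|_S)$, yielding
\[
K_{\tilde S} \cdot \alpha \;<\; \mu^*(K_X|_S) \cdot \alpha \;<\; 0.
\]
By BDPP on the smooth K\"ahler surface $\tilde S$, this shows $\tilde S$ uniruled.

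The main technical obstacle is producing the movable class $\alpha$ on $\tilde S$ satisfying $\mu^*(K_X|_S) \cdot \alpha < 0$ and $E \cdot \alpha = 0$ simultaneously: one must arrange $\alpha$ to come from a covering family of curves on $S$ avoiding the image of the exceptional locus of $\mu$, which requires genericity arguments combined with the fact that non-pseudoeffectivity of $\mu^*(K_X|_S)$ is detected by movable classes of the required form. In the non-Gorenstein case there is the further subtlety that the clean adjunction $K_S = (K_X + S)|_S$ must be replaced by a $\Q$-Cartier version accounting for the discrepancies of the terminal singularities of $X$, which introduces additional correction terms to be controlled against $\alpha$.
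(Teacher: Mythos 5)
Your overall skeleton---Brunella's theorem to get $K_X$ pseudoeffective, Boucksom's divisorial Zariski decomposition to place $S$ in the negative part with coefficient $\lambda>0$, adjunction, and the Enriques--Kodaira classification on a desingularisation---is exactly the paper's. The divergence, and the problem, is in how you conclude that $K_{\tilde S}$ is not pseudoeffective. You route this through duality, asking for a movable class $\alpha$ on $\tilde S$ that simultaneously (a) detects the non-pseudoeffectivity of $\mu^*(K_X|_S)$, (b) satisfies $E \cdot \alpha = 0$, and (c) comes from a covering family of curves. Requirement (c) is a genuine gap: a non-projective compact K\"ahler surface need not carry any covering family of curves at all (a K3 surface without curves carries none), and the existence of such a family on $\tilde S$ is essentially the conclusion of the lemma---so as written the argument is circular. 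Moreover (b) does not follow from the projection formula for a general member of a covering family (such a curve can meet $E$ positively); it would follow if $\alpha = \mu^*\beta$, but then you need the duality statement on the possibly non-normal surface $S$ itself. You flag this as ``the main technical obstacle'', but it is an obstacle of your own making.

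The paper avoids all of this with a convexity argument that needs no dual classes. Writing $S = S_1 = \frac{1}{\lambda_1} K_X - \frac{1}{\lambda_1}\bigl(\sum_{j \geq 2} \lambda_j S_j + N(K_X)\bigr)$, adjunction gives $K_S = \frac{\lambda_1+1}{\lambda_1} K_X|_S - \frac{1}{\lambda_1} P$ with $P$ pseudoeffective on $S$; if $K_S$ were pseudoeffective, then $\frac{\lambda_1+1}{\lambda_1} K_X|_S = K_S + \frac{1}{\lambda_1} P$ would be a sum of pseudoeffective classes, contradicting the hypothesis. For the passage to $\tilde S$, take the normalisation followed by the minimal resolution: then $K_{\tilde S} = \mu^* K_S - (\text{effective})$, the correction being the conductor plus non-positive discrepancies, so $K_{\tilde S}$ pseudoeffective would force $\mu^* K_S$, hence $K_S$, pseudoeffective. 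This disposes of your exceptional term $E$ by a sign, not by an intersection with a hypothetical curve class. With that replacement your write-up collapses to the paper's proof; the first two thirds of your argument (Brunella, the Zariski decomposition, the well-definedness and pseudoeffectivity of $(K_X-\lambda S)|_S$) are correct and match the paper.
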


\begin{proof}
Since $K_X$ is pseudoeffective we can use Boucksom's decomposition theorem \cite{Bou04}: there exist
irreducible surfaces $S_j \subset X$ such that
\begin{equation} \label{Bdecomposition}
K_X = \sum_{j=1}^r \lambda_j S_j + N(K_X),
\end{equation}
where $\lambda_j \geq 0$ and the cohomology class $N(K_X)$ is pseudoeffective and the restriction
of $N(K_X)$ to any surface in $X$ is pseudoeffective. One easily deduces that $S$ is one of
the surfaces $S_j$, so up to renumbering we can write 
$$
S = S_1 = \frac{1}{\lambda_1} K_X - \frac{1}{\lambda_1} (\sum_{j=2}^r \lambda_j S_j +  N(K_X)).
$$
Yet by adjunction this implies
$$
K_{S} = (K_X+S)|_{S} = \frac{\lambda_1+1}{\lambda_1} K_X|_S - \frac{1}{\lambda_1} (\sum_{j=2}^r \lambda_j (S_j \cap S) +  N(K_X)|_S).
$$
By hypothesis $K_X|_S$ is not pseudoeffective and $- \frac{1}{\lambda_1} (\sum_{j=2}^r \lambda_j (S_j \cap S) +  N(K_X)|_S)$ is anti-pseudoeffective. 
Thus $K_S$ is not pseudoeffective, hence $S$ is covered by rational curves (pass to a desingularisation whose canonical class is still not pseudoeffective). 
\end{proof}

Put together these arguments show the existence of $K_X$-negative rational curves if $K_X$ is not nef. 
The difference between this existence result and the classical cone Theorem \ref{theoremNE} is that
we have to show that if for some curve $C \subset X$ the degree $-K_X \cdot C$ is too large (say
$-K_X \cdot C > 2 \dim X -1$), then the cohomology class $C$ decomposes into $[C]= [C_1]+[C_2]$
with $C_i$ effective $1$-cycles. Mori proves this property via his famous bend-and-break technique,
again a reduction to positive characteristic plays an important role. 
In our case we can use \cite[Thm. 1.15]{Kol96} and reduce many arguments to considerations
in one of the surfaces $S_j$ appearing in the decomposition \eqref{Bdecomposition}.

\begin{proposition} \label{propositionNE} \cite[Thm.6.2]{HP16}
Let $X$ be a $\Q$-factorial compact K\"ahler threefold with terminal singularities that is not uniruled.
Then there exists a countable family $(\Gamma_i)_{i \in I}$  of rational curves  on $X$
such that 
$$
0 < -K_X \cdot \Gamma_i \leq 4
$$
and
$$
\NEX = \NEX_{K_X \geq 0} + \sum_{i \in I} \R^+ [\Gamma_i] 
$$
\end{proposition}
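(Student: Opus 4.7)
The plan is to adapt the classical cone theorem by replacing Mori's characteristic-$p$ bend-and-break with a combination of Boucksom's divisorial Zariski decomposition, Lemma~\ref{lemmafundamental}, and Kollár's deformation theory for rational curves on threefolds. Since $X$ is not uniruled, Theorem~\ref{theorembrunella} shows that $K_X$ is pseudoeffective and the decomposition~\eqref{Bdecomposition}
\[
K_X = \sum_{j=1}^{r} \lambda_j S_j + N(K_X)
\]
is available; the surfaces $S_j$ will play the role of Mori's locus of free rational curves.

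First I would establish the geometric source of extremal rays. Fix a $K_X$-negative extremal ray $R \subset \NEX$ and choose an irreducible curve $C$ with $[C] \in R$. Combining the positivity of $N(K_X)$ on surfaces with $K_X \cdot C < 0$, one argues that some component $S_j$ has $S_j \cdot C < 0$, forcing $C \subset S_j =: S$. By extremality of $R$, every irreducible curve whose class lies in $R$ is contained in this fixed surface $S$. Consequently $K_X|_S$ fails to be pseudoeffective (after a short additional comparison using adjunction and the fact that $S$ carries a $K_X$-negative curve class), and Lemma~\ref{lemmafundamental} then shows that $S$ is covered by rational curves.

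Next I would pass to a desingularization $\pi\colon \tilde S \to S$. Being a smooth compact K\"ahler surface covered by rational curves, $\tilde S$ is ruled, hence projective; the classical cone theorem on $\tilde S$ furnishes rational curves $\tilde\Gamma \subset \tilde S$ whose images $\Gamma = \pi(\tilde\Gamma) \subset S \subset X$ have class in $R$. To control the anticanonical degree I would apply Koll\'ar's bend-and-break in the form \cite[II.1.15]{Kol96} inside the ambient threefold $X$: if $-K_X \cdot \Gamma > 4$, then the space of morphisms $\PP^1 \to X$ deforming $\Gamma$ through two fixed points of $\Gamma$ has positive dimension, so $\Gamma$ degenerates into a reducible effective $1$-cycle $\Gamma_1 + \Gamma_2$. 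Extremality of $R$ forces $[\Gamma_1], [\Gamma_2] \in R$, and iteration on $-K_X$-degree produces a rational curve $\Gamma_R$ with $0 < -K_X \cdot \Gamma_R \leq 4$ and $[\Gamma_R] \in R$.

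Finally, let $(\Gamma_i)_{i \in I}$ enumerate all rational curves on $X$ satisfying $0 < -K_X \cdot \Gamma_i \leq 4$; the countability of $I$ follows from the fact that $\Rat(X)$ has at most countably many components. By the previous step every $K_X$-negative extremal ray is spanned by some $[\Gamma_i]$, and the degree bound together with the finite dimensionality of $N_1(X)$ forces the extremal rays to accumulate only along the hyperplane $K_X^\perp$. A standard Hahn--Banach separation argument in the closed convex cone $\NEX$ then yields
\[
\NEX = \NEX_{K_X \geq 0} + \sum_{i \in I} \R^+ [\Gamma_i].
\]
The main obstacle is the localization step: showing that every $K_X$-negative extremal ray is supported on one of the finitely many surfaces $S_j$ of the Zariski decomposition. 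This is the replacement for Mori's reduction to positive characteristic and depends delicately on the precise positivity properties of the positive part $N(K_X)$ against curves not contained in its null locus; once that step is in place, the remainder of the argument is a routine combination of Lemma~\ref{lemmafundamental}, Koll\'ar's deformation theorem, and convex analysis.
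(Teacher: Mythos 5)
Your overall strategy --- Boucksom's decomposition \eqref{Bdecomposition}, localisation of the $K_X$-negative geometry to the surfaces $S_j$, Lemma~\ref{lemmafundamental}, projectivity of the resulting uniruled K\"ahler surfaces, and bend-and-break in the form of \cite[II, Thm.1.15]{Kol96} --- is exactly the strategy of \cite[Thm.6.2]{HP16} as sketched in the text preceding the proposition, and the convex-geometry endgame (discreteness of bounded-degree curve classes, accumulation of rays only along $K_X^{\perp}$) is the standard one.

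There is, however, a genuine gap in the localisation step, which you rightly single out as the main obstacle but then attack with the wrong mechanism. First, an extremal ray of the \emph{closed} cone $\NEX$ need not contain the class of an irreducible curve, so ``choose an irreducible curve $C$ with $[C]\in R$'' is not available; the argument must be run the other way round, decomposing the class of \emph{every} irreducible curve $C$ with $K_X\cdot C<0$ and only afterwards extracting extremal rays by separation. Second, and more seriously, the inference ``$N(K_X)\cdot C\geq 0$, hence $S_j\cdot C<0$ for some $j$, hence $C\subset S_j$'' fails: the positive part $N(K_X)$ is only known to have pseudoeffective restriction to every \emph{surface}; it can be strictly negative on curves lying in its non-nef locus, which is precisely a countable union of curves, so nothing prevents your $C$ from being one of them. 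The paper's resolution is deformation-theoretic, and this is why \cite[II, Thm.1.15]{Kol96} is invoked at this point and not only for bend-and-break: since $\dim X=3$ and $-K_X\cdot C>0$, the curve $C$ moves in a positive-dimensional family sweeping out a surface $S'$; then $K_X|_{S'}$ is not pseudoeffective, and (as in the proof of Lemma~\ref{lemmafundamental}) the positivity of $N(K_X)$ on surfaces forces $S'$ to be one of the finitely many $S_j$. Hence every $K_X$-negative irreducible curve class is represented by a curve inside some $S_j$, which is the statement your outline actually needs. With that substitution the remaining steps (resolution $\tilde S\to S_j$, comparison of $K_{\tilde S}$ with the pull-back of $K_X|_{S_j}$ via adjunction and \eqref{Bdecomposition}, breaking curves of large $-K_X$-degree) proceed essentially as in \cite{HP16}, modulo the usual care at the non-Gorenstein points that the survey itself sets aside.
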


However, as observed in Section \ref{sectionkaehler}, this is not the statement that we want: the bimeromorphic
geometry of K\"ahler manifolds is governed by $\NAX$, not the much smaller $\NEX$. Somewhat surprisingly,
we have a precise analogue for $\NAX$:

\begin{theorem} \label{theoremNA}
Let $X$ be a $\Q$-factorial compact K\"ahler threefold with terminal singularities that is not uniruled.
Then there exists a countable family $(\Gamma_i)_{i \in I}$  of rational curves  on $X$
such that 
$$
0 < -K_X \cdot \Gamma_i \leq 4
$$
and
$$
\NAX = \NAX_{K_X \geq 0} + \sum_{i \in I} \R^+ [\Gamma_i] 
$$
\end{theorem}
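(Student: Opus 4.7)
The plan is to reduce Theorem~\ref{theoremNA} to Proposition~\ref{propositionNE} by establishing that every $K_X$-negative extremal ray of $\NAX$ is already contained in $\NEX$. Granted this, the same countable collection of rational curves $(\Gamma_i)_{i\in I}$ provided by Proposition~\ref{propositionNE} generates the $K_X$-negative part of $\NAX$, and the decomposition of the whole cone follows from a Hahn--Banach separation argument combined with the duality $\Phi(\mathrm{Nef}(X))=\NAX$ available in dimension three.

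The first step is to localise on a single $K_X$-negative extremal ray $R\subset\NAX$ and pick a positive closed current $T$ of bidimension $(1,1)$ whose class spans $R$. Applying Siu's decomposition in codimension two gives
\[
T=\sum_{j\ge 1}\nu_j\,[C_j]+T_{\mathrm{diff}},\qquad \nu_j>0,
\]
with $C_j$ irreducible curves and $T_{\mathrm{diff}}$ a positive closed current of vanishing generic Lelong number along every curve of $X$. Extremality of $R$ forces each nonzero summand to lie on $R$; if some $\nu_j[C_j]\ne 0$ then $R\subset\NEX$, and Proposition~\ref{propositionNE} produces a rational curve $\Gamma$ with $R=\R^+[\Gamma]$ and $0<-K_X\cdot\Gamma\le 4$. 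It therefore remains to exclude the purely diffuse case $T=T_{\mathrm{diff}}$ by showing $K_X\cdot[T_{\mathrm{diff}}]\ge 0$.

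For this, Brunella's Theorem~\ref{theorembrunella} places us in situation \eqref{hypothesis2}, so $K_X$ is pseudo-effective and the Boucksom divisorial Zariski decomposition \eqref{Bdecomposition} yields
\[
K_X=\sum_j\lambda_j S_j+N(K_X),\qquad \lambda_j\ge 0,
\]
with $N(K_X)$ restricting to a pseudo-effective class on every irreducible surface. Each intersection $S_j\cdot[T_{\mathrm{diff}}]$ is non-negative: representing $[S_j]$ by a current of integration, the wedge $[S_j]\wedge T_{\mathrm{diff}}$ is well defined as a positive measure, since the potential obstruction---concentration of $T_{\mathrm{diff}}$ along a curve inside $S_j$---is ruled out by diffuseness. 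For the residual term one approximates $N(K_X)$ by quasi-psh representatives whose singular loci are contained in curves, along which $T_{\mathrm{diff}}$ places no Lelong mass, and passes to the limit in the spirit of \cite{DP04, Bou04} to obtain $N(K_X)\cdot[T_{\mathrm{diff}}]\ge 0$. Adding these contributions contradicts $K_X\cdot R<0$, as required.

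The main obstacle is exactly this last non-negativity $N(K_X)\cdot[T_{\mathrm{diff}}]\ge 0$. Both factors are only pseudo-effective and genuinely singular, so there is no purely cohomological reason for positivity---the argument must exploit the fact that the singular locus of any quasi-psh representative of $N(K_X)$ lies in codimension $\ge 2$, precisely where the diffuseness of $T_{\mathrm{diff}}$ gives no Lelong mass. All remaining ingredients---the separation argument upgrading a description of the extremal rays to the full cone, the identification of those rays with the curves $\Gamma_i$, and the bound $-K_X\cdot\Gamma_i\le 4$---are inherited directly from Proposition~\ref{propositionNE} and the $\mathrm{Nef}(X)$--$\NAX$ duality.
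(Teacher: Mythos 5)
Your overall strategy---reduce to Proposition \ref{propositionNE} by showing that the $K_X$-negative part of $\NAX$ is generated by curve classes---is the right one, and is indeed the strategy of the paper. But the dichotomy you use to implement it (Siu decomposition into an atomic part along curves plus a ``diffuse'' remainder, followed by an attempt to \emph{exclude} the diffuse case) fails at a point other than the one you flag. A positive closed current of bidimension $(1,1)$ with vanishing generic Lelong numbers along every curve can still be entirely carried by a surface: the prototype is $T=i_*(\omega|_S)$ for $S\subset X$ an irreducible surface and $\omega$ a K\"ahler form, whose Lelong numbers vanish identically. If $K_X|_S$ is not pseudoeffective---and this is exactly the situation one must confront when $K_X$ is pseudoeffective but not nef---then $K_X\cdot[T]=K_X|_S\cdot\omega|_S<0$, so the inequality $K_X\cdot[T_{\mathrm{diff}}]\ge 0$ you aim to prove is simply false, and no refinement of the intersection with the Boucksom decomposition can rescue it. Relatedly, your claim that $S_j\cdot[T_{\mathrm{diff}}]\ge 0$ because ``concentration along a curve is ruled out by diffuseness'' misidentifies the obstruction: the wedge $[S_j]\wedge T_{\mathrm{diff}}$ is problematic when $T_{\mathrm{diff}}$ charges the surface $S_j$ itself, which diffuseness does not prevent, and for $T=i_*(\omega|_{S_j})$ the number $S_j\cdot[T]=S_j|_{S_j}\cdot\omega|_{S_j}$ can certainly be negative.

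The missing idea is that the diffuse, surface-supported case must be \emph{converted into} a curve class rather than excluded. The paper does this via the Demailly--P\u aun structure theorem \cite[Cor.0.3]{DP04}: $\NAX$ is the closed cone generated by classes of the form $[\omega]^2$, $[\omega]\cdot[S]$ and $[C]$. The classes $[\omega]^2$ lie in $\NAX_{K_X\ge 0}$ by pseudoeffectivity of $K_X$, the curve classes are handled by Proposition \ref{propositionNE}, and for a $K_X$-negative class $[\omega]\cdot[S]$ one has $K_X|_S\cdot[\omega|_S]<0$, so $K_X|_S$ is not pseudoeffective; Lemma \ref{lemmafundamental} then shows that $S$ is covered by rational curves, whence $H^2(S,\sO_S)=0$ after resolving, $S$ is \emph{projective}, $[\omega|_S]$ is an ample $\R$-divisor class, and $[\omega]\cdot[S]$ is the class of an effective $1$-cycle. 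This geometric step---uniruledness of $S$ forcing projectivity of $S$ and hence algebraicity of the class---is the heart of the proof and is absent from your argument; without it the reduction to $\NEX$ does not go through.
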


\begin{proof} As before assume that $X$ is smooth.
The idea is to show that on the $K_X$-negative side, the cones $\NEX$ and $\NAX$ are quite similar, so the statement
reduces to Proposition \ref{propositionNE}. More precisely we know by
\cite[Cor.0.3]{DP04}\footnote{If $X$ is not smooth, all the computations are on a resolution
of singularities, cf. \cite[Sect.6]{HP16}. We expect that \cite[Cor.0.3]{DP04} holds for singular spaces
with mild singularities, see \cite{CT16} for some progress in this direction.}, that $\NAX$ is the closure of the convex cone generated by cohomology classes
of the form $[\omega]^2, [\omega] \cdot [S]$ and $[C]$ where $\omega$ is a K\"ahler form,
$S$ a surface and $C$ a curve on $X$. Note that \cite[Cor.0.3]{DP04} and \cite{CT16} are general results, valid in any dimension. 

Since $K_X$ is pseudoeffective we have $K_X \cdot [\omega]^2 \geq 0$, so these classes lie in $\NAX_{K_X \geq 0}$
and  are not of any interest for the statement. The case of a curve class $[C]$ is dealt with by Proposition \ref{propositionNE},
so the main problem is to understand the classes $[\omega] \cdot [S]$ which are $K_X$-negative. 
Now observe that
$$
0 > K_X \cdot [\omega] \cdot [S] = K_X|_S \cdot [\omega|_S].
$$
Since $\omega|_S$ is K\"ahler, the restriction $K_X|_S$ is not pseudoeffective, hence 
the surface $S$ is covered by rational curves by Lemma \ref{lemmafundamental}.
Yet this implies that (up to replacing $S$ by some resolution) that $H^2(S, \sO_S)=0$,
hence $S$ is projective and the cohomology class $[\omega|_S]$ is an ample $\R$-divisor.
Thus we can represent the class $[\omega] \cdot [S]$ by some curve class $[C]$ where $C \subset S$ is an
effective $\R$-divisor linearly equivalent to to the ample divisor $[\omega|_S]$. 
\end{proof}

\subsection{Contraction theorem - non-uniruled case} \label{subsectioncontraction}

Let $X$ be a $\Q$-factorial compact K\"ahler threefold with terminal singularities that is not uniruled.
We fix a $K_X$-negative extremal ray $R$ in the generalised Mori cone $\NAX$.
As a consequence of the cone theorem there exists a nef 
cohomology class $\alpha \in N^1(X)$ such that 
$$
\alpha^\perp \cap \NAX := \{
\gamma \in \NAX \ | \ \alpha \cdot \gamma = 0
\}  = R
$$
and $\alpha$ is adjoint, i.e. we can write $\alpha = K_X+\omega$ with $\omega$ a K\"ahler class.
If $X$ is a projective manifold one can choose $\alpha$ to be the class of a line bundle $L$ and the basepoint free theorem \ref{theorembpf} tells us that some positive multiple of $L$ is generated by its global sections.
The morphism defined by this multiple is then the contraction of the extremal ray $R$.

In the general setting of K\"ahler manifolds it is not clear whether $\alpha$ represents a line bundle,
in fact a morphism between non-algebraic spaces is almost never defined by a line bundle.
The only general tool for constructing morphisms in the analytic setting is given by the contraction
theorems of Grauert, Fujiki and Ancona-van Tan:

\begin{theorem} \label{theoremgrauert} \cite{AV84}
Let $X$ be a  complex space, and let $Z \subset X$ be a closed complex subspace.
Suppose that there exists a proper morphism $g: Z \rightarrow W$ such that the conormal sheaf
$I_Z/I_Z^2$ of $Z \subset X$ is ample on the $g$-fibres. Suppose also that the natural map
$$
g_* (\sO_Z/I_Z^{n+1}) \rightarrow  g_* (\sO_Z/I_Z^{n})
$$
is surjective for any $n \geq 1$.

Then there exists a morphism  $\holom{f}{X}{Y}$ such that $f|_{X \setminus Z}$ is an isomorphism onto its image and
$f|_Z=g$.
\end{theorem}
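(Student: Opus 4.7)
The strategy is the classical one of Grauert, adapted by Ancona--van Tan to the higher-dimensional situation: first construct $Y$ as a ringed space by prescribing its structure sheaf near $g(Z) \subset W$ from the formal completion of $X$ along $Z$, and then promote this ringed space to a genuine complex space via a convergence/algebraisation argument driven by the ampleness of $I_Z/I_Z^2$ on the fibres of $g$.

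I would begin by working with the formal completion $\hat X$ of $X$ along $Z$, equipped with the proper morphism $\hat g : \hat X \to W$ induced by $g$. The graded pieces of the $I_Z$-adic filtration fit into short exact sequences whose gradeds are quotients of $\Sym^n(I_Z/I_Z^2)$; together with the ampleness of $I_Z/I_Z^2$ on the $g$-fibres, a relative Serre vanishing argument (applied fibrewise and then globalised using the assumed surjectivity $g_*(\sO_Z/I_Z^{n+1}) \to g_*(\sO_Z/I_Z^n)$, which kills the obstruction to lifting sections through the $I_Z$-adic tower) shows that $\hat g_* \sO_{\hat X}$ is a coherent sheaf of $\sO_W$-algebras. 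This is the candidate formal structure sheaf of $Y$ at points of $g(Z)$.

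The main obstacle, and the heart of the argument, is to upgrade formal to convergent: for each $w \in W$, the stalk $(\hat g_* \sO_{\hat X})_w$ must be realised as an algebra of germs of honest holomorphic functions on a neighbourhood of $\fibre{g}{w}$ in $X$. Here the ampleness hypothesis does real work, furnishing quantitative estimates (in the spirit of Grauert's bump method, or via $L^2$-techniques on strongly pseudoconvex tubular neighbourhoods of $\fibre{g}{w}$) that control the growth in $n$ of sections of $I_Z^n/I_Z^{n+1}$ and guarantee convergence of the formal expansion in the normal directions. The surjectivity assumption enters again here, allowing one to inductively lift a compatible system of truncated sections to a convergent series. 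From the resulting algebra of holomorphic germs one extracts finitely many elements separating points modulo $g$, giving a holomorphic map of a neighbourhood of $\fibre{g}{w}$ into some $\C^N$ that factors through $g$.

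Finally, I would define $Y$ locally at each $w \in g(Z)$ as the image of such an embedding; the coherence of $\hat g_* \sO_{\hat X}$ established above ensures that these local analytic models glue to a genuine complex space structure in a neighbourhood of $g(Z)$. Gluing this neighbourhood with the open part $X \setminus Z$, which is mapped isomorphically onto its image by construction, produces the desired morphism $\holom{f}{X}{Y}$ with $f|_Z = g$; its properness is inherited from the properness of $g$ together with the identity on $X \setminus Z$.
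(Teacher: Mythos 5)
First, a point of order: the paper does not prove this statement at all. It is quoted from Ancona--van Tan \cite{AV84} (the relative version of Grauert's contraction criterion, in the line of Grauert and Fujiki) and is used purely as a black box in the construction of extremal contractions, so there is no internal proof to measure your attempt against. What follows is therefore an assessment of your sketch on its own merits, against the argument actually carried out in the cited literature.

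Your outline reproduces the standard roadmap --- formal completion along $Z$, coherence of the pushforward of the formal structure sheaf, comparison of formal and convergent data, local models, gluing --- but at each point where the theorem is genuinely hard you substitute an appeal to ``estimates in the spirit of Grauert'' for an argument, and one of these gaps is fatal as written. The passage from formal to convergent is not a refinement of the formal construction: the formal principle is false in general (there are embeddings $Z\subset X$ whose formal neighbourhoods are isomorphic while the germs of their actual neighbourhoods are not, already for curves in surfaces), so no amount of bookkeeping with the $I_Z$-adic tower and the surjectivity hypothesis can ``guarantee convergence of the formal expansion in the normal directions''. The ampleness of $I_Z/I_Z^2$ must enter through a genuinely analytic mechanism, namely the construction of a neighbourhood of $Z$ that is strongly pseudoconvex relative to $W$ (a relatively $1$-convex tube built from a metric of positive curvature on the conormal sheaf along the $g$-fibres), after which the contraction is obtained by a relative Remmert-type reduction; your sketch names this possibility in passing but does not construct the exhaustion function, and that construction is precisely the content of \cite{AV84}. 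A secondary gap: the coherence of $\varprojlim g_*(\sO_X/I_Z^{n+1})$ is itself a comparison theorem of Grauert type and does not follow from fibrewise Serre vanishing applied to the graded quotients of $\Sym^n(I_Z/I_Z^2)$, since one needs uniformity in $n$ and control of the low-degree terms. As it stands, what you have is an accurate table of contents for the proof rather than a proof.
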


In our setting the natural candidate for the subspace $Z$ is the locus covered by
the curves in the extremal ray $R$. However from this description it is not clear how
to check the conditions of Theorem \ref{theoremgrauert}. Moreover the theorem does not
tell us how to check that $Y$ is a K\"ahler space. In order to overcome both of these difficulties
we work with the cohomology class $\alpha$.

Let us note first that since $\alpha=K_X+\omega$ is the sum of a pseudoeffective class and a K\"ahler class, it is big, so we have $\alpha^3>0$. Thus the null locus 
$$
\mbox{Null}(\alpha) := \bigcup_{V \subset X, \alpha^{\dim V} \cdot V=0} V
$$
is a countable union of proper subvarieties of $X$. By a theorem of Collins-Tosatti \cite{CT15} (we argue on a resolution in order to be able to apply \cite{CT15}), this locus
is actually equal to the non-K\"ahler locus of $\alpha$. Modulo some technical arguments one
can prove that $\mbox{Null}(\alpha)$ is exactly the locus covered by curves in the extremal ray
$R$ (in the algebraic case this follows a posteriori from the existence of the contraction).
Since we are in dimension three we obtain the following cases
\begin{itemize}
\item $\mbox{Null}(\alpha)$ is a finite union of curves;
\item $\mbox{Null}(\alpha)$ is an irreducible divisor $S$ and $\alpha|_S \neq 0$;
\item $\mbox{Null}(\alpha)$ is an irreducible divisor $S$ and $\alpha|_S = 0$.
\end{itemize}

In the first case we want to contract the curves via a small contraction onto points, in the second
case we want to contract the divisor $S$ onto a curve, and in the last case we want to contract
$S$ onto a point. We will deal with the first two cases, the last (and easiest) one is left to the interested
reader.

{\em Case 1: $\mbox{Null}(\alpha)$ is a finite union of curves.} In this case the morphism
$g$ in Theorem \ref{theoremgrauert} is simply the morphism that maps each connected component
of $\mbox{Null}(\alpha)$ onto a point. The difficult part is to check that the conormal sheaf is ample
on each of the fibres. However we know from \cite{Bou04} that there exists a modification
$\holom{\mu}{X'}{X}$ such that the image of the exceptional locus is $\mbox{Null}(\alpha)$
and 
$$
\mu^* \alpha = \omega' + E
$$
where $\omega'$ is a K\"ahler class and $E$ an effective divisor. Since $\alpha$ is numerically 
trivial on every curve $C \subset \mbox{Null}(\alpha)$, we obtain
$$
-E|_E = \omega'|_E. 
$$
Thus the conormal sheaf of $E$ is ample and we can contract the connected components of $E$
onto points by applying Theorem \ref{theoremgrauert} to $E \subset X'$. By construction this induces
the contraction on $X$.

{\em Case 2: $\mbox{Null}(\alpha)$ is an irreducible divisor $S$ and $\alpha|_S \neq 0$.}
This case is surprisingly difficult. We can restate the conditions in a more geometrical way:
the curves in the extremal ray $R$ cover the surface $S$, but it is not possible to connect
two arbitrary points in $S$ using curves in $R$. Thus we expect that the curves in the extremal
ray define a fibration $g: S \rightarrow C$ onto a curve $C$ and the extremal contraction 
will then contract the divisor $S$ onto this curve $C$. The problem is that in general the divisor $S$ can be non-normal, so standard techniques to define the fibration $g$ \cite{8authors} do not
apply. One therefore defines first a fibration $\tilde g: \tilde S \rightarrow \tilde C$ on the normalisation $\tilde S$ of $S$. 
If $X$ is terminal (and some other cases, cf. \cite[Ch.4.B]{CHP16}) a  computation of intersection numbers shows that $S$ is smooth or at least slc in a neighbourhood of a general fibre. 
An explicit construction then shows that $\tilde g$ descends to a fibration $g$.
Once we have defined the fibration $g$, the conditions on the conormal sheaf are easily verified since $S$ is a $\Q$-Cartier divisor.

We can now state the contraction theorem:

\begin{theorem} \label{thmcontr} 
Let $X$ be a $\Q$-factorial compact K\"ahler threefold with terminal singularities that is not uniruled,
and let $R \subset \NAX$ 
be a $K_X$-negative extremal ray. Then there exists a bimeromorphic morphism
$\holom{\varphi}{X}{Y}$ onto a normal compact K\"ahler space such that $-K_X$ 
is $\varphi$-ample (so $\varphi$ is a projective morphism) and $\varphi$ contracts exactly
the curves in the extremal ray $R$.
\end{theorem}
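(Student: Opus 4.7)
The plan is to follow the case analysis sketched in Section~\ref{subsectioncontraction}: first produce a nef supporting class for $R$ via Theorem~\ref{theoremNA}, identify its null locus with the exceptional set of the contraction we seek, and then invoke Ancona--van Tan's form of Grauert's theorem (Theorem~\ref{theoremgrauert}) to actually build $\varphi$.

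More concretely, I would first apply Theorem~\ref{theoremNA} to extract a nef adjoint class $\alpha \in N^1(X)$ of the form $\alpha = K_X + \omega$, with $\omega$ K\"ahler, such that $\alpha^\perp \cap \NAX = R$. Since $K_X$ is pseudoeffective and $\omega$ is K\"ahler, one has $\alpha^3 > 0$, so $\alpha$ is nef and big, and by Collins--Tosatti (applied on a resolution) the null locus $\mbox{Null}(\alpha)$ equals the non-K\"ahler locus of $\alpha$. A standard argument then identifies $\mbox{Null}(\alpha)$ with the union of curves whose class lies in $R$. In dimension three this leaves only three possibilities: (i)~$\mbox{Null}(\alpha)$ is a finite union of curves; (ii)~it is an irreducible divisor $S$ with $\alpha|_S = 0$; (iii)~it is an irreducible divisor $S$ with $\alpha|_S \neq 0$.

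For case (i) I would use Boucksom's negative part decomposition \cite{Bou04}: on a suitable modification $\mu : X' \to X$ we can write $\mu^*\alpha = \omega' + E$ with $\omega'$ K\"ahler and $E$ effective exceptional, and the numerical triviality of $\alpha$ along every curve in $\mbox{Null}(\alpha)$ forces $-E|_E = \omega'|_E$, which is ample on $E$. Applying Theorem~\ref{theoremgrauert} with $Z = E \subset X'$ contracts the connected components of $E$ to points, and this descends to the desired small contraction on $X$. Case (ii) is even easier: an analogous computation gives that $-S|_S$ is ample, so Grauert's theorem contracts $S$ directly to a point.

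The hard part will be case (iii), where $R$-curves cover $S$ but fail to connect arbitrary pairs of points, so $\varphi$ must send $S$ onto a curve $C$. The main obstacle is that $S$ may fail to be normal, which blocks any direct application of the standard algebraic fibration machinery. My plan is to first construct a fibration $\tilde g : \tilde S \to \tilde C$ on the normalization $\tilde S$ of $S$, and then exploit the terminality of $X$ together with an intersection-number estimate (in the spirit of \cite[Ch.~4.B]{CHP16}) to show that $S$ is smooth, or at least slc, along a general fibre; this is enough to descend $\tilde g$ to an honest fibration $g : S \to C$. With $g$ in place, the $\Q$-Cartier property of $S$ yields $I_S/I_S^2 \cong \sO_X(-S)|_S$, and the same numerical data give its relative ampleness along the fibres of $g$, so Theorem~\ref{theoremgrauert} produces a bimeromorphic morphism $\varphi : X \to Y$ onto a normal compact complex space. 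The K\"ahler property of $Y$ then follows from Lemma~\ref{lemmakaehler} applied to $\alpha$, and $\varphi$-ampleness of $-K_X$ is immediate from $-K_X = \omega - \alpha$ together with the $\varphi$-triviality of $\alpha$ and the K\"ahlerness of $\omega$.
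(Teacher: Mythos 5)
Your route coincides with the paper's in every structural respect: the nef adjoint supporting class $\alpha=K_X+\omega$ with $\alpha^\perp\cap\NAX=R$, the identification of $\mbox{Null}(\alpha)$ with the non-K\"ahler locus via Collins--Tosatti and with the locus of $R$-curves, the same three-case analysis feeding into Theorem~\ref{theoremgrauert} (Boucksom's $\mu^*\alpha=\omega'+E$ in the small case, descent of a fibration from the normalisation of $S$ in the divisor-to-curve case), and Lemma~\ref{lemmakaehler} for the K\"ahler property of $Y$. Up to that last step your proposal is a faithful reconstruction.

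The gap is in the sentence ``the K\"ahler property of $Y$ then follows from Lemma~\ref{lemmakaehler} applied to $\alpha$.'' That lemma has three hypotheses. Two of them are indeed immediate: $\alpha^3>0$ because $\alpha$ is big, and $\alpha\cdot C=0$ exactly for curves with $[C]\in R$ by the choice of the supporting class. But the third hypothesis --- that $\alpha^2\cdot S=0$ \emph{if and only if} the surface $S$ is contracted by $\varphi$ --- is not formal, and verifying it is in fact the entire content of the paper's own proof of the theorem. The nontrivial direction is: given a surface $S$ with $\alpha^2\cdot S=0$, one first observes that $K_X|_S$ cannot be pseudoeffective (otherwise $\alpha|_S=K_X|_S+\omega|_S$ would be big on $S$ and $\alpha^2\cdot S>0$), so $S$ is covered by rational curves by Lemma~\ref{lemmafundamental}; but this alone does not place $S$ inside $\mbox{Null}(\alpha)$, and one needs a further argument (\cite[Prop.7.11]{HP16}) showing that $S$ is covered by \emph{$\alpha$-trivial} rational curves, i.e.\ by curves in $R$, before one can conclude that $\varphi$ contracts $S$. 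Without this step you cannot rule out a surface that is $\alpha^2$-trivial yet not contracted, and then Lemma~\ref{lemmakaehler} does not apply and the K\"ahlerness of $Y$ is not established.
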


\begin{proof}
The considerations above establish the existence of a morphism $\holom{\varphi}{X}{Y}$ in the category of compact analytic
spaces. Thus  we are left to show
that $Y$ is also a K\"ahler space which can be done by checking the intersection properties
in Lemma \ref{lemmakaehler}. We have already seen that $\alpha^3>0$ and by the definition of $\alpha$ we have
$\alpha \cdot C=0$ for a curve $C$ if and only if $[C] \in R$. Hence the critical point is to show
that $\alpha^2 \cdot S=0$ if and only if $S$ is contracted by $\varphi$. Since $\alpha=K_X+\omega$ any
such surface has the property that $K_X|_S$ is not pseudoeffective, hence $S$ is covered by
rational curves by Lemma \ref{lemmafundamental}. One proves \cite[Prop.7.11]{HP16} that $S$ is actually
covered by $\alpha$-trivial rational curves, so $S$ is contracted by $\varphi$.
\end{proof}

\subsection{Running the MMP - non-uniruled case} 

We are now in position to establish Theorem \ref{theoremmain}. So let $X$ be a normal $\Q$-factorial compact   K\"ahler threefold with  terminal singularities and assume that
 $X$ is not uniruled. 
If $K_X$ is nef, there is nothing to prove. If $K_X$ is not nef, then by Theorem \ref{theoremNA}, there exists a $K_X$-negative extremal ray $R \subset \overline{NA}(X).$ By Theorem \ref{thmcontr}, the contraction 
$\varphi: X \to Y$ exists. Since $X$ is not uniruled, $\varphi$ is birational. If $\varphi$ is divisorial, then $Y$ is again a normal $\Q$-factorial compact   K\"ahler threefold with at most terminal singularities and we
continue the MMP with $Y$ replacing $X.$ If $\varphi$ is small, then by Mori's flip theorem \cite{Mor88}, the flip: $X^+ \to Y$ exists. Note here that the construction of the flip is analytically local around the 
exceptional locus of $\varphi.$ Again, $X^+$ is a  normal $\Q$-factorial compact   K\"ahler threefold with at most terminal singularities, and we may continue with $X^+$ instead of $X.$ 
Since the second Betti number drops by one in case of a divisorial contraction, it only remains to see that there is no infinite sequence of flips. This is however verified with exactly the same arguments
as in the projective case, see \cite{KMM87}.

\subsection{Uniruled threefolds}

Let $X$ be a $\Q$-factorial compact K\"ahler threefold with terminal singularities that is uniruled.
Then the canonical class $K_X$ is not pseudoeffective, so it seems that the arguments from the preceding sections
can not be used to establish the minimal model program. On the other hand the structure
of non-algebraic uniruled K\"ahler threefolds is very simple: let $X \dashrightarrow Y$
be the MRC-fibration. If $Y$ had dimension at most one, then $H^0(X, \Omega_X^2)=0$,
so $X$ is projective by Kodaira's criterion. Thus we see that $Y$ is a surface and the general
fibre $F$ is isomorphic to $\PP^1$. Choose now a K\"ahler class $\omega$
on $X$ such that $(K_X+\omega) \cdot F=0$. Then it follows from a result of P\v aun  \cite{Pau12} that
$K_X+\omega$ is pseudoeffective and we can develop the same theory as before, but
now we only contract extremal rays that are $K_X+\omega$-negative. Running this MMP
we obtain a bimeromorphic morphism $X \dashrightarrow X'$ such that
$K_{X'}+\omega'$ is nef. Such a class has very special properties:

\begin{theorem} \label{theoremmfs}
Let $X$ be a normal $\Q$-factorial compact K\"ahler threefold with  terminal singularities. Suppose that the base of the MRC-fibration $X \dashrightarrow Z$ has dimension two. Let $\omega$ be a K\"ahler class on $X$ such that $K_X+ \omega$ is nef
and $(K_X+\omega) \cdot F=0$ for $F$ a general fibre of the MRC-fibration.

Then there exists a {\em holomorphic} fibration $\holom{\varphi}{X}{S}$ 
onto a normal compact K\"ahler  surface $S$ such that $K_X+\omega=\varphi^* \omega_Y$
where $\omega_Y$ is a K\"ahler class on $S$.
\end{theorem}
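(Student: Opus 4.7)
Write $\alpha := K_X + \omega$, which is nef by hypothesis. The plan is first to show that $\alpha$ has numerical dimension exactly two, then to extract a meromorphic factorisation via P\v aun's positivity theorem applied to a resolution of the MRC, and finally to promote this factorisation to an honest holomorphic contraction onto a normal K\"ahler surface.

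\textbf{Numerical dimension.} I would first establish that $\alpha^3 = 0$. Since the general MRC-fibre $F \cong \mathbb{P}^1$ satisfies $\alpha \cdot F = 0$ and the family $\{F\}$ covers $X$, the standard fact that a nef class trivial on a covering family of curves has vanishing top self-intersection yields $\alpha^3 = 0$. Next I would argue that $\alpha^2 \not\equiv 0$ in $N_1(X)$: were $\alpha^2$ numerically trivial, $\alpha$ would be numerically a pullback from a curve, which is incompatible with the MRC having two-dimensional base and with the strict positivity of $\omega$ transversally to $F$. Thus $\alpha$ has numerical dimension exactly two.

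\textbf{Meromorphic factorisation.} Take a resolution of the indeterminacies of the MRC: a bimeromorphic $\pi \colon \tilde X \to X$ with $\tilde X$ smooth K\"ahler, fitting into a holomorphic fibration $q \colon \tilde X \to \tilde Z$ whose general fibre is $\mathbb{P}^1$ and where $\tilde Z$ is a smooth K\"ahler surface bimeromorphic to $Z$. The class $\pi^*\alpha$ is nef and trivial on the general fibre of $q$. Applying the relative positivity result of P\v aun \cite{Pau12} to the adjoint class $K_{\tilde X/\tilde Z} + \pi^*\omega$, together with the usual correction by $\pi$- and $q$-exceptional divisors, one obtains a nef class $\beta$ on $\tilde Z$ with $\pi^*\alpha = q^*\beta$. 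From $\alpha^3 = 0$, $\alpha^2 \not\equiv 0$, and the projection formula we deduce $\beta^2 > 0$, so $\beta$ is big and nef on the surface $\tilde Z$.

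\textbf{Holomorphy and K\"ahlerness of the base.} On the smooth K\"ahler surface $\tilde Z$ the $\beta$-trivial curves form a negative-definite configuration (Hodge index theorem), and contracting them produces a normal compact K\"ahler surface $S$ together with a bimeromorphic morphism $\tilde Z \to S$ such that $\beta$ descends to a K\"ahler class $\omega_Y$ on $S$. It remains to show that $\tilde X \to \tilde Z \to S$ factors through $\pi$. For this I would check that every $\pi$-exceptional curve $C \subset \tilde X$ is collapsed by the composition: by the projection formula $q^*\beta \cdot C = \pi^*\alpha \cdot C = \alpha \cdot \pi_* C = 0$, so $q(C)$ is either a point or a $\beta$-trivial curve, and in either case collapsed to a point in $S$. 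This yields the desired holomorphic $\varphi \colon X \to S$, and the relation $\alpha = \varphi^*\omega_Y$ follows by construction. The most delicate step is the meromorphic factorisation: producing the equality $\pi^*\alpha = q^*\beta$ on the nose, rather than only modulo exceptional divisors, requires careful handling of the vertical and horizontal components of the resolved MRC; a secondary difficulty in the final step is ensuring, using $\mathbb{Q}$-factoriality and terminality of $X$, that the contraction built on $\tilde Z$ really does descend to a morphism on $X$ itself.
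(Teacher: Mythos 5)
Your overall scheme (show $\nu(K_X+\omega)=2$, factor the class through a two-dimensional base, contract the trivial curves to get a K\"ahler surface, then descend) is in the right spirit, but the two steps you either dispatch in a sentence or defer as ``delicate'' are precisely where the content of the theorem lies, and neither is supplied. First, the claim $\alpha^2\not\equiv 0$: your justification --- that otherwise $\alpha$ ``would be numerically a pullback from a curve'', which you declare incompatible with the MRC base being a surface --- is not a theorem. A nef class of numerical dimension one on a compact K\"ahler threefold is not known to factor numerically through a curve, and nothing about the MRC base having dimension two prevents $(K_X+\omega)^2\equiv 0$ a priori. Ruling this out is exactly what the paper identifies as \emph{the} key point: it is done by a second application of P\v aun's theorem \cite{Pau12} on relative adjoint transcendental classes along the MRC, not by an elementary incompatibility. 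Without it, everything downstream ($\beta^2>0$, bigness on $\tilde Z$, the Hodge-index contraction) collapses.

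Second, the factorisation $\pi^*\alpha=q^*\beta$. P\v aun's theorem gives pseudoeffectivity of a relative adjoint class; it does not produce an identity $\pi^*\alpha=q^*\beta$ in $N^1(\tilde X)$, and obtaining such an identity is essentially equivalent to the statement being proved. Even the weaker assertion that $\pi^*\alpha$ is numerically trivial on every curve contained in a \emph{special} fibre of $q$ is not automatic --- the hypothesis only gives $\alpha\cdot F=0$ for the general MRC fibre --- and if some curve in a degenerate fibre had $\alpha\cdot C>0$, then $\alpha$ would not be a pullback from any model of $Z$ and your surface $S$ could not be built this way. This is why the paper argues differently: it applies the nef reduction of \cite{8authors} to $\alpha=K_X+\omega$, which by construction produces an almost holomorphic map onto a surface whose general fibres are the maximal $\alpha$-trivial subvarieties, and then uses the equality of nef dimension and numerical dimension, $n(\alpha)=\nu(\alpha)=2$ (again via \cite{Pau12}), to upgrade ``almost holomorphic'' to ``holomorphic'' and to obtain $\alpha=\varphi^*\omega_Y$. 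Your final descent argument (every $\pi$-exceptional curve is collapsed, then rigidity) is fine as far as it goes, but only once the two missing inputs above are in place.
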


This statement is an analytic analogue of the basepoint free theorem:
if $K_X+\omega$ is the class of a line bundle $L$, the statement says that $L$
is the pull-back of an ample divisor, in particular some multiple of $L$ is globally generated. 
The morphism $\varphi$ itself is in general not the contraction of an extremal ray,
but $-K_X$ is $\varphi$-ample, so $\varphi$ is a projective morphism. 
By \cite{Nak87} we can thus run a relative MMP over $S$ until we reach a Mori fibre space. Thus we may state

\begin{corollary} Let $X$ be a normal non-algebraic uniruled $\Q$-factorial compact   K\"ahler threefold with  terminal singularities. Then $X$ is a bimeromorphic 
via a sequence of contractions of extremal rays in $\overline{NA}$ and of flips to a Mori fiber space $\varphi:  X' \to S$. Here $X'$ is a normal $\Q$-factorial compact   K\"ahler threefold with  terminal singularities,
$S$ is a normal non-algebraic K\"ahler surface,
$-K_{X'}$ is $\varphi-$ample and $b_2(X') = b_2(S)  + 1.$ 
\end{corollary}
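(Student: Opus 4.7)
The plan is to use a K\"ahler class pairing trivially with the MRC-fibre as a ``polarisation'', run the adjoint MMP for $K_X+\omega$ to reach a fibration over a surface, and then run a relative MMP over that surface to reach a Mori fibre space.

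First, since $X$ is uniruled and non-algebraic, the argument sketched before Theorem \ref{theoremmfs} shows that the MRC-fibration $X \dashrightarrow Z$ has base of dimension two and general fibre $F\cong\PP^1$: otherwise $H^0(X,\Omega_X^2)=0$ and Kodaira's criterion would force $X$ to be projective. I would then pick a K\"ahler class $\omega$ on $X$ with $(K_X+\omega)\cdot F=0$, so that $K_X+\omega$ is pseudoeffective by P\v aun's theorem \cite{Pau12}, and run the MMP contracting $(K_X+\omega)$-negative extremal rays of $\NAX$. Such rays are automatically $K_X$-negative, so the cone and contraction theorems of Sections \ref{subsectionrational}--\ref{subsectioncontraction} together with Mori's flips \cite{Mor88} apply; $\Q$-factoriality and terminal singularities are preserved in the standard way, the K\"ahler property by Lemma \ref{lemmakaehler}, and termination is as in the non-uniruled case. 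This yields a bimeromorphic model $X\dashrightarrow X'$ with $K_{X'}+\omega'$ nef, where $\omega'$ is the strict transform of $\omega$.

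Next, Theorem \ref{theoremmfs} applied to $X'$ produces a holomorphic fibration $\varphi_0: X' \to S_0$ onto a normal compact K\"ahler surface with $K_{X'}+\omega' = \varphi_0^*\omega_{S_0}$; since $-K_{X'} = \omega' - \varphi_0^*\omega_{S_0}$ is $\varphi_0$-ample, the morphism $\varphi_0$ is projective. I would then invoke Nakayama's relative MMP \cite{Nak87} over $S_0$, running successive contractions of $K_{X'}$-negative extremal rays relative to $S_0$ and the corresponding flips (which exist by the analytic-local construction of \cite{Mor88}). At each step $\Q$-factoriality and terminal singularities are preserved as in the absolute case, and the K\"ahler property is preserved by Lemma \ref{lemmakaehler} applied relatively over $S_0$. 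Termination is as in the projective case, and the process ends at a Mori fibre space $\varphi: X' \to S$ with $-K_{X'}$ $\varphi$-ample.

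Finally, I would verify that $S$ is non-algebraic and establish the Betti number identity $b_2(X')=b_2(S)+1$. If $S$ were algebraic, then $X'$ would be projective over $S$, hence Moishezon, hence projective by Namikawa's theorem \cite{Nam02}; but $a(X')=a(X)<3$ by bimeromorphic invariance of the algebraic dimension, a contradiction. For $b_2(X')=b_2(S)+1$, one tracks the rank of $N^1$ through the MMP: divisorial contractions drop the rank by one while flips preserve it, and the terminal Mori fibration satisfies $\dim N^1(X'/S)=1$, which combined with the Leray-type identification $H^2(X',\R)\cong H^2(S,\R)\oplus\R$ for the conic bundle $\varphi$ yields the claim. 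The main obstacle I anticipate is the relative K\"ahler MMP over $S_0$: the arguments of Section \ref{sectionMMP} are carried out absolutely, so their relative counterparts, in particular the preservation of the K\"ahler property via the intersection-theoretic hypotheses of Lemma \ref{lemmakaehler}, must be re-checked at each elementary step.
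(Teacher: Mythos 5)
Your proposal follows exactly the route the paper takes: MRC base of dimension two, the adjoint class $K_X+\omega$ made pseudoeffective via P\v{a}un's theorem, the $(K_X+\omega)$-MMP leading to Theorem \ref{theoremmfs}, and then Nakayama's relative MMP over the resulting surface; your added verifications of non-algebraicity of $S$ and of $b_2(X')=b_2(S)+1$ are correct and merely make explicit what the paper leaves implicit. The only caveat is that the cone and contraction theorems of Sections \ref{subsectionrational}--\ref{subsectioncontraction} are stated for non-uniruled $X$, so they must be re-proved with $K_X+\omega$ in place of $K_X$ (as in \cite{HP15}) rather than ``applied'' directly, but this is the same level of detail at which the paper itself operates.
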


For the proof of Theorem \ref{theoremmfs} we use the nef reduction introduced in \cite{8authors}. This yields
an almost holomorphic map $\merom{\varphi}{X}{S}$ onto some surface $S$ and the goal is to show that
in our case this map is actually holomorphic. The key point of our argument is to show, via a second application
of P\v aun's theorem \cite{Pau12} that the numerical dimension of $K_X+\omega$ is two (hence equal to the ``nef dimension'' \cite{8authors}).

\section{Abundance for K\"ahler threefolds} \label{sectionabundance}

In Section \ref{sectionMMP} we explained that every K\"ahler threefold is bimeromorphic
either to a Mori fibre space or a minimal model (i.e. a K\"ahler space with nef canonical class).
In this section we will explain that such a minimal model is always good (i.e. the canonical class is semiample).

\begin{theorem} \label{abundance1} \cite{CHP16}
Let $X$ be a normal $\Q$-factorial compact K\"ahler threefold with terminal singularities
such that $K_X$ is nef. Then $K_X$ is semi-ample, that is some positive multiple $m K_X$ is globally generated.
\end{theorem}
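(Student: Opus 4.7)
The plan is to stratify by the numerical dimension $\nu(K_X)\in\{0,1,2,3\}$ of the nef class $K_X$ and to treat each value separately; in each case the goal is to produce an effective divisor in some $|mK_X|$ and to extract enough geometric structure to force semi-ampleness.

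If $\nu(K_X)=3$, then $K_X$ is nef and big, hence representable by a K\"ahler current, so $X$ is Moishezon. Since $X$ is K\"ahler with terminal (in particular rational) singularities, Namikawa's theorem \cite{Nam02} forces $X$ to be projective, and the classical basepoint free Theorem \ref{theorembpf} yields semi-ampleness. For $\nu(K_X)=0$, i.e.\ $K_X\equiv 0$, one must show that $K_X$ is torsion. I would split on the irregularity $q(X)=h^1(X,\sO_X)$: when $q(X)>0$, the Albanese morphism (and its Stein factorisation) reduces the problem to lower-dimensional abundance; when $q(X)=0$, the key input is a singular Beauville-Bogomolov-type decomposition for $K$-trivial K\"ahler spaces (in the spirit of Campana and Greb-Kebekus-Peternell), which exhibits some finite \'etale cover of $X$ as a product of Calabi-Yau and irreducible holomorphic symplectic pieces, on each of which finite order of the canonical class is classical.

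The main obstacle is the intermediate range $\nu(K_X)\in\{1,2\}$, where the critical step is \emph{non-vanishing}: producing an $m>0$ with $H^0(X,mK_X)\neq 0$. The tools used in the projective case (Kawamata-Viehweg vanishing, reduction to positive characteristic) are essentially unavailable in the K\"ahler setting, so I would argue analytically. For a K\"ahler class $\omega$, the class $K_X+\epsilon\omega$ is nef and big, hence contains a K\"ahler current with analytic singularities; applying an Ohsawa-Takegoshi/Demailly-type $L^2$-extension from a maximal non-klt centre of this current, and letting $\epsilon\to 0$ while using that $K_X$ remains pseudo-effective in the limit, should produce a non-trivial section of some $mK_X$. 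This is the step I expect to be technically the most demanding, and where most of the novel analytic input is concentrated.

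Once non-vanishing is in hand, consider the Iitaka fibration $X\dashrightarrow Y$ with $\dim Y=\kappa(X)\leq 2$. One then shows $\kappa(X)=\nu(K_X)$ by restricting to a general fibre $F$, on which $K_F$ is numerically trivial by adjunction, and iterating abundance in lower dimensions; this is available on curves trivially, and on K\"ahler surfaces via the Enriques-Kodaira classification. Finally, to deduce semi-ampleness on $X$ itself, I would combine a K\"ahler analogue of the basepoint free theorem -- similar in spirit to Theorem \ref{theoremmfs}, applied to the adjoint class $K_X+\epsilon\omega$ -- with the K\"ahler descent criterion of Lemma \ref{lemmakaehler}, to show that the Iitaka map becomes holomorphic and that $K_X$ is the pull-back of a semi-ample class on $Y$.
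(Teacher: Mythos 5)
The two endpoints of your stratification that you get right --- $\nu(K_X)=3$ via a K\"ahler current, Namikawa's projectivity criterion \cite{Nam02} and the classical basepoint free Theorem \ref{theorembpf}, and the overall reduction to proving $\kappa(X)=\nu(X)$ followed by Kawamata's theorem --- do agree with the paper. But the two steps carrying all the weight are not proofs as written. First, non-vanishing: the proposed Ohsawa--Takegoshi extension from a non-klt centre of a K\"ahler current in $K_X+\epsilon\omega$, followed by letting $\epsilon\to 0$, is a well-known heuristic that is not known to work even for projective threefolds; any section you extend lives in a multiple of the non-integral class $K_X+\epsilon\omega$, and there is no control of the non-klt centres or of the $L^2$ estimates in the limit. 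The paper instead imports non-vanishing (Theorem \ref{abundance2}) from \cite{DP03}, where it is proved by Riemann--Roch on a desingularisation combined with three hard inputs: the Kawamata--Viehweg-type vanishing of Theorem \ref{KV-Kaehler} for $\nu=2$; the inequality $K_X\cdot c_2(X)\geq 0$, obtained by approximating $K_X$ by K\"ahler classes and analysing (semi-)stability of the tangent sheaf; and the bound $h^2(X,mK_X)\leq 1$ when $\nu=1$, via an analysis of extension classes. None of this is replaced by your analytic sketch.

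Second, and more seriously, your mechanism for proving $\kappa=\nu$ --- restriction to a general fibre of the Iitaka fibration --- is vacuous in exactly the cases that matter. After the easy reductions, the entire content of Theorem \ref{abundance3} is to exclude $\kappa(X)=0$ with $\nu(X)\in\{1,2\}$; there the Iitaka fibration maps to a point, the ``general fibre'' is $X$ itself, and adjunction plus surface abundance give nothing. The paper's route is completely different: starting from an effective $D\in|mK_X|$, it runs a log MMP for the dlt pair $(X,\supp D)$ (Theorem \ref{theoremdltMMP}), reduces to Proposition \ref{reduction}, and then combines Riemann--Roch with the Chern class inequality $L'\cdot(K_{X'}^2+c_2(X'))\geq 0$ --- itself resting on generic nefness of $\Omega_X$ (Proposition \ref{propositiongenericallynef}) and a Bogomolov-type inequality on singular K\"ahler threefolds (Theorem \ref{theoremnonnegative}) --- with an induction that restricts to the slc surface $B=\supp D$ and uses surface abundance to force $h^0(X,nL)$ to grow linearly. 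You would need to supply a substitute for this entire mechanism. A smaller point: in the $\nu=0$ case, invoking a singular Beauville--Bogomolov decomposition is circular here, since that decomposition is obtained in \cite{CHP16} only after one knows $mK_X\cong\sO_X$; the paper simply observes that non-vanishing together with $K_X\equiv 0$ forces $D=0$, hence $mK_X\cong\sO_X$.
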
 

As in the algebraic case, the proof of the abundance theorem falls into two parts:

\begin{theorem} \label{abundance2} \cite{DP03}
Let $X$ be a normal $\Q$-factorial compact K\"ahler threefold with terminal singularities
such that $K_X$ is nef. Then we have $\kappa(X) \geq 0$. 
\end{theorem}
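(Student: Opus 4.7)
The plan is to follow the outline of the classical algebraic proof of non-vanishing (Miyaoka-Kawamata) while substituting Kähler-adapted analytic tools for those that fail in the non-projective setting. As a first reduction, since $X$ has terminal singularities, a resolution $\pi : \hat X \to X$ satisfies $K_{\hat X} = \pi^* K_X + E$ with $E$ effective exceptional, so $\kappa(\hat X) = \kappa(X)$, and I may assume throughout that $X$ is smooth. Brunella's theorem (Theorem~\ref{theorembrunella}) already gives that $X$ is not uniruled since $K_X$ is nef and hence pseudoeffective.

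I would then split into cases according to the irregularity $q(X) = h^1(X, \sO_X)$. If $q(X) \geq 1$, the Albanese morphism $\alpha : X \to \mathrm{Alb}(X)$ is non-trivial, and a standard Albanese-type reduction applies: either $\dim \alpha(X) < 3$, in which case one obtains sections of $m K_X$ by studying the relative canonical sheaf over $\alpha(X)$ and invoking non-negativity of $K_{X/\alpha(X)}$ on the general fibre (a curve or a non-uniruled surface, where abundance is known), or $\alpha$ is surjective onto an abelian threefold, forcing $K_X$ to be numerically trivial and hence (by Beauville-Bogomolov type arguments in the Kähler setting) torsion.

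The substantive case is $q(X) = 0$, which I would attack via Riemann-Roch,
$$
\chi(X, \sO_X(mK_X)) \;=\; \tfrac{1}{6} m^3 K_X^3 \;-\; \tfrac{1}{12} m\, K_X \cdot c_2(X) \;+\; \chi(\sO_X),
$$
and split by the numerical dimension $\nu(K_X)$. If $\nu(K_X) = 0$, then $K_X \equiv 0$ and a Beauville-Bogomolov style decomposition theorem for compact Kähler manifolds with numerically trivial canonical class forces some multiple of $K_X$ to be $\sO_X$. If $\nu(K_X) = 3$ (i.e.\ $K_X$ nef and big), then the leading term $\tfrac{1}{6} m^3 K_X^3$ dominates and the conclusion follows once the higher cohomology is controlled by a Kähler version of the Kawamata-Viehweg vanishing theorem — precisely the content of \cite{DP03}. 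The intermediate cases $\nu(K_X) = 1, 2$ are the delicate ones: here one needs the sign $K_X \cdot c_2(X) \geq 0$, or at least a controlled lower bound, which should come from a Miyaoka-type generic semi-positivity statement for $\Omega_X^1$ applied to the non-uniruled threefold $X$. Combined with $\chi(\sO_X) = 1 + h^2(\sO_X) > 0$ and the Kähler Kawamata-Viehweg vanishing, this yields $h^0(X, m K_X) > 0$ for some $m$.

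The main obstacle, as in all non-projective MMP questions discussed earlier in the paper, is that the two crucial algebraic inputs — Miyaoka's generic semi-positivity of the cotangent sheaf for non-uniruled manifolds, and Kawamata-Viehweg vanishing — classically rely respectively on reduction to positive characteristic and on ample line bundles, both unavailable on a general compact Kähler manifold. The heart of the proof will thus be to establish analytic substitutes: a generic nefness statement for $\Omega_X^1$ via foliation-theoretic arguments in the spirit of Brunella, and a Kähler Kawamata-Viehweg vanishing theorem of the sort provided by \cite{DP03}. Once these are in place, Riemann-Roch together with the case split above produces an effective positive multiple of $K_X$ and hence $\kappa(X) \geq 0$.
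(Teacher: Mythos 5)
Your proposal correctly identifies two of the three ingredients the paper attributes to \cite{DP03} --- a Riemann--Roch computation driven by an inequality $K_X \cdot c_2(X) \geq 0$, and the K\"ahler Kawamata--Viehweg vanishing theorem --- but it diverges from the paper's reduction and, more importantly, leaves the hardest case open. On the reduction: the paper does not split by the irregularity $q(X)$. It invokes Fujiki's classification of non-algebraic K\"ahler threefolds that are \emph{not simple}, together with Campana's theorem that infinite $\pi_1$ forces $X$ to be bimeromorphic to a torus quotient, to reduce to the case where $X$ is simple with $a(X)=0$ and $\pi_1(X)$ finite. These hypotheses are then used \emph{essentially}: $a(X)=0$ is what makes the maximal destabilizing subsheaf of $T_X$ independent of the polarization in the $c_2$ argument (which proceeds via Enoki's semistability theorem and approximation of $K_X$ by K\"ahler classes, not via a Brunella-style foliation argument --- the whole difficulty being that Miyaoka--Yau for the degenerate polarization $K_X$ is unknown), and finiteness of $\pi_1$ is what rules out the projectively flat case in the $h^2$ analysis. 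Your reduction to $q(X)=0$ is strictly weaker: it leaves untouched, for instance, the non-simple threefolds with $q=0$ and $a(X)=1$ or $2$, which the paper disposes of by classification, and it does not supply the hypotheses $a(X)=0$, $\pi_1$ finite that the remaining case requires.

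The genuine gap is the case $\nu(K_X)=1$. Theorem \ref{KV-Kaehler} requires $c_1(L)^2 \neq 0$, so it controls $H^2(X,mK_X)$ only when $\nu(K_X)\geq 2$; for $\nu(K_X)=1$ it gives nothing, and Riemann--Roch alone (note the correct expansion is $\chi(X,mK_X)=\tfrac{m(m-1)(2m-1)}{12}K_X^3+\tfrac{m}{12}K_X\cdot c_2(X)+\chi(\sO_X)$, with a plus sign on the $c_2$ term) only yields $h^0 \geq \chi - h^2$, which is not positive without a bound on $h^2$. This is precisely why the paper lists a third ingredient, the boundedness $h^2(X,mK_X)\leq 1$ when $\nu(X)=1$, proved by extracting from any two-dimensional $H^2$ a family of non-split extensions $0 \to K_X \to \sE \to mK_X \to 0$ and analyzing the (semi)stability of $\sE$, again under the hypotheses $a(X)=0$ and $\pi_1(X)$ finite. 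This step --- the most delicate part of \cite{DP03} --- is entirely absent from your outline, and without it the $\nu=1$ case does not close.
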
 

and

\begin{theorem} \label{abundance3} \cite{CHP16}
Let $X$ be a normal $\Q$-factorial compact K\"ahler threefold with terminal singularities
such that $K_X$ is nef. If $\kappa (X) \geq 0$, then $K_X$ is semi-ample.
\end{theorem}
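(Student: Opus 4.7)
The plan is to run a case analysis on the numerical dimension $\nu := \nu(K_X) \in \{0,1,2,3\}$, reducing each case either to a statement already available in the projective category or to a lower-dimensional problem via a canonical fibration. The ultimate target, semi-ampleness of $K_X$, amounts to proving both the equality $\kappa(X)=\nu(K_X)$ and base-point freeness of some $|mK_X|$.

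I would first dispose of the two extreme cases. For $\nu=3$: the class $K_X$ is nef and big, hence represented by a K\"ahler current by \cite{DP04}, so on a resolution $X$ becomes Moishezon; since terminal singularities are rational, Namikawa's theorem \cite{Nam02} then forces $X$ to be projective, and the classical abundance theorem for projective terminal threefolds (Miyaoka--Kawamata) applies verbatim. For $\nu=0$: a Hodge-index argument on K\"ahler threefolds shows that a nef class $\alpha$ with $\alpha\cdot\omega^2=0$ for one (equivalently every) K\"ahler $\omega$ must vanish in $N^1(X)$, so $K_X\equiv 0$; combining this with an effective $D\in|mK_X|$ provided by $\kappa(X)\geq 0$, the divisor $D$ must itself be zero, whence $mK_X\sim 0$.

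The substantive cases are $\nu\in\{1,2\}$. I would fix an effective $D\in|mK_X|$, form the Iitaka fibration $\varphi:X\dashrightarrow Y$ with $\dim Y=\kappa(X)$, and after a modification $\mu:\tilde X\to X$ reduce to a holomorphic fibration $f:\tilde X\to Y$ with connected fibres. A general fibre $F$ has $K_F$ nef with $\kappa(F)=0$, so $K_F\sim_\Q 0$ by lower-dimensional abundance. Then I would aim for a Kawamata--Fujino-style canonical bundle formula yielding $K_{\tilde X}\equiv f^*L+E$ with $L$ nef on $Y$ and $E$ effective and $\mu$-exceptional; semi-ampleness of $L$ on $Y$ would descend to that of $K_X$. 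For $\nu=2$ the base $Y$ is a surface and $L$ is big and nef, and semi-ampleness is extracted via the nef reduction of \cite{8authors} combined with P\v aun's theorem \cite{Pau12}, in the spirit of Theorem \ref{theoremmfs}. For $\nu=1$ the base is a curve, and after a base change one is left with a family of surfaces of Kodaira dimension zero, at which point Fujita--Kawamata semipositivity applied to the Hodge bundle $f_*\omega_{\tilde X/Y}^{\otimes m}$ closes the argument.

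The principal obstacle is the execution of the canonical bundle formula and the preceding positivity step in the non-algebraic K\"ahler setting. First, one must establish $\kappa(X)=\nu(K_X)$ so that the Iitaka fibration actually has the expected base dimension; the algebraic proof uses Miyaoka's generic semipositivity, a characteristic-$p$ argument whose K\"ahler replacement is delicate. Then one must control the singular fibres of $f$ and show that the Hodge-theoretic positivity of $f_*\omega_{\tilde X/Y}^{\otimes m}$ is strong enough to produce not merely a nef but a semi-ample class on $Y$. The case $\nu=2$ is the most delicate, because the base surface $Y$ may fail to be projective, so semi-ampleness of $L$ on $Y$ cannot be imported from algebraic surface theory and must instead be extracted using the K\"ahler-analytic toolkit (nef reduction, Collins--Tosatti \cite{CT15,CT16}, P\v aun) developed in Section \ref{sectionMMP}.
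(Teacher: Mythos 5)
Your treatment of the extreme cases matches the paper: $\nu=3$ goes through Moishezon plus Namikawa's projectivity criterion, and $\nu=0$ together with $\kappa\geq 0$ forces $mK_X\sim 0$ (indeed $\nu=0$ already means $K_X\equiv 0$ by definition, so the Hodge-index step is not needed). But for $\nu\in\{1,2\}$ your main engine fails to engage with the actual difficulty. Once one knows $\kappa(X)=\nu(X)$, Kawamata's theorem (cited in the paper and valid for K\"ahler spaces) already gives semi-ampleness, so the entire content of the theorem is to rule out $\kappa(X)=0$ with $\nu(X)\in\{1,2\}$ (the case $\kappa=1$, $\nu=2$ being excluded as in the algebraic setting). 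In precisely that critical case the Iitaka fibration you build your argument on is the constant map to a point: there is no base $Y$, no general fibre $F$ with $\kappa(F)=0$ in lower dimension, and no canonical bundle formula to run. You acknowledge in your ``obstacles'' paragraph that $\kappa=\nu$ must first be established, but you offer no mechanism for it beyond noting that the characteristic-$p$ input is delicate; that is the theorem, not an obstacle to it. The fibration-plus-semipositivity machinery you describe is essentially the proof of the implication $\kappa=\nu\Rightarrow$ semi-ample, which the paper simply imports.

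The paper's route through the hard case is quite different and none of it appears in your proposal. One fixes $D\in|mK_X|$, sets $B=\supp D$, and runs a log MMP for the pair $(X,B)$ (Theorem \ref{theoremdltMMP}) to reach a model where $K_{X'}+B'$ is nef and the pair is dlt, reducing to Proposition \ref{reduction}. The sections are then produced by a numerical argument: a Chern class inequality $L\cdot c_2\geq 0$ obtained from generic nefness of $\Omega_X$ (Proposition \ref{propositiongenericallynef}, the K\"ahler substitute for Miyaoka via Enoki's theorem) combined with a Bogomolov inequality on singular K\"ahler threefolds (Theorem \ref{theorembogomolov}); Riemann--Roch then bounds $\chi(X,\sO_X(nL))$ from below, and an inductive restriction to the slc surface $B$ --- where surface abundance makes $h^0(B,\sO_B(nL))$ grow linearly --- forces $h^0(X,nL)$ to grow, giving $\kappa(X)\geq 1$. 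If you want to salvage your write-up, the place to invest is exactly here: the log MMP for the pair $(X,\supp D)$ and the Chern class inequality are the two ingredients your proposal is missing.
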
 

\subsection{Existence of a section} \label{subsectionsection}
The existence of a global section for some pluricanonical divisor
is a very difficult problem which, even for projective threefolds, is done by several case distinctions
and ad-hoc arguments (cf. \cite{LP16} for some recent progress). 
For non-algebraic K\"ahler threefolds, Theorem \ref{abundance2}
was established in \cite{Pet01} with the exception that $X$ is simple (see Definition \ref{def:simple}) and not bimeromorphic to a quotient of a torus. As a consequence of the abundance theorem \ref{abundance1} we will prove in Section \ref{sectionapplications} 
that this exceptional case does not exist, but a priori one has to develop tools that do not rely on this kind
of classification result. In fact the paper \cite{Pet01} uses
heavily the classification of non-algebraic K\"ahler threefolds which are not simple (due to Fujiki \cite{Fuj83}), the remaining most difficult case was then settled in \cite{DP03}. 
A key ingredient is the Kawamata-Viehweg type vanishing:

\begin{theorem} \label{KV-Kaehler} \cite{DP03}
Let $X$ be a normal compact K\"ahler space of dimension $n$. 
Let $L$ be a nef line bundle on $X$ such that $c_1(L)^2 \ne 0.$ 
Then we have
$$ 
H^q(X,K_X + L) = 0
$$
for $q \geq n-1$. 
\end{theorem}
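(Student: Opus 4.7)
The plan is to reduce the statement to a smooth compact K\"ahler manifold via a desingularisation, and then to establish the vanishing in the two remaining cohomological degrees $q = n$ and $q = n-1$ by separate arguments.

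\textbf{Step 1: reduction to the smooth case.} Let $\pi : \tilde X \to X$ be a resolution of singularities, with $\tilde X$ a smooth compact K\"ahler manifold; this exists since $X$ is in the Fujiki class. The pullback $\tilde L := \pi^* L$ is nef, and by injectivity of $\pi^*$ on cohomology we have $c_1(\tilde L)^2 = \pi^*(c_1(L)^2) \neq 0$. Grauert-Riemenschneider on $\tilde X$ gives $R^i\pi_* \omega_{\tilde X} = 0$ for $i > 0$, and the natural map $\pi_* \omega_{\tilde X} \to \omega_X$ is an isomorphism under the standing rational-singularities assumption. The Leray spectral sequence together with the projection formula then yields $H^q(X, K_X + L) \cong H^q(\tilde X, K_{\tilde X} + \tilde L)$, so we may assume $X$ smooth.

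\textbf{Step 2: top degree $q = n$.} Serre duality gives $H^n(X, K_X + L) \cong H^0(X, -L)^*$. A nonzero section would render $-L$ pseudo-effective; combined with nefness of $L$, testing against $\omega^{n-1}$ for any K\"ahler class $\omega$ forces $L \cdot \omega^{n-1} = 0$. The Hovanskii-Teissier inequality then implies $c_1(L) \equiv 0$ numerically, contradicting $c_1(L)^2 \neq 0$.

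\textbf{Step 3: degree $q = n-1$.} By nefness, for each $\epsilon > 0$ there is a smooth Hermitian metric $h_\epsilon$ on $L$ with curvature $\Theta_{h_\epsilon} \geq -\epsilon \omega$. A class in $H^{n-1}(X, K_X + L)$ is represented by an $h_\epsilon$-harmonic $L$-valued $(n, n-1)$-form $\alpha_\epsilon$, and the Bochner-Kodaira-Nakano identity reads
\[
0 \;=\; \|\bar\partial \alpha_\epsilon\|^2 + \|\bar\partial^*_{h_\epsilon} \alpha_\epsilon\|^2 \;\geq\; \int_X \langle [i\Theta_{h_\epsilon}, \Lambda]\, \alpha_\epsilon, \alpha_\epsilon \rangle_{h_\epsilon}\, dV_\omega.
\]
On bidegree $(n,n-1)$ the curvature operator $[i\Theta, \Lambda]$ diagonalises in an eigenframe of $\Theta_{h_\epsilon}$ as the trace of $\Theta_{h_\epsilon}$ minus the missing eigenvalue, so its integrated contribution is essentially controlled by $c_1(L) \cdot \omega^{n-1}$ up to $O(\epsilon)$. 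The hypothesis $c_1(L)^2 \neq 0$ in cohomology, combined with the Hovanskii-Teissier inequality, yields $c_1(L) \cdot \omega^{n-1} > 0$, giving strict positivity that eventually dominates the regularisation error. Letting $\epsilon \to 0$ and passing to a weak limit, $\alpha_\epsilon \to 0$ in $L^2$, which proves the vanishing.

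\textbf{Main obstacle.} The delicate step is the last one: one must convert the purely cohomological condition $c_1(L)^2 \neq 0$ into a pointwise positivity estimate on the curvature operator of the regularised metric, strong enough to rule out $L$-valued harmonic $(n, n-1)$-forms despite only the weak lower bound $\Theta_{h_\epsilon} \geq -\epsilon \omega$ being available. Balancing the $O(\epsilon)$ errors against the positivity extracted from $c_1(L)^2$ is the technical heart of \cite{DP03}; by contrast, Step 1 is a routine resolution argument and Step 2 is a short Hodge-index computation.
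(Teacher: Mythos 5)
Your Steps 1 and 2 are consistent with the only proof content this survey actually records for Theorem \ref{KV-Kaehler}: the paper reduces to the smooth case exactly as you do, via Grauert--Riemenschneider (see the remark following Conjecture \ref{KVcon}), and refers to \cite{DP03} for everything else. Two small points there: the theorem as stated assumes only that $X$ is normal, so you should either interpret $K_X$ as $\pi_*\omega_{\tilde X}$ or make the rational-singularities hypothesis explicit before invoking $\pi_*\omega_{\tilde X}\cong\omega_X$; and in Step 2, nefness of $L$ together with $L\cdot\omega^{n-1}=0$ already forces $c_1(L)=0$, so the Hovanskii--Teissier detour is unnecessary.

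The genuine gap is Step 3, which is the entire theorem. First, the estimate you appeal to is not available: on $(n,n-1)$-forms the curvature term in Bochner--Kodaira--Nakano is bounded below pointwise by the sum of the $n-1$ smallest eigenvalues of $i\Theta_{h_\epsilon}$ times $|\alpha_\epsilon|^2$, and nefness yields only that this is $\geq -(n-1)\epsilon|\alpha_\epsilon|^2$. The integrated curvature term is \emph{not} controlled by $c_1(L)\cdot\omega^{n-1}$, because the harmonic representative $\alpha_\epsilon$ may concentrate exactly where the eigenvalues are near $-\epsilon$; no strict positivity can be extracted from this cohomological number alone. Second, and decisively: your sketch uses the hypothesis $c_1(L)^2\neq 0$ only to deduce $c_1(L)\cdot\omega^{n-1}>0$, which for a nef class is equivalent to $c_1(L)\neq 0$, i.e.\ to $\nu(L)\geq 1$. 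But the statement with $\nu(L)\geq 1$ in place of $c_1(L)^2\neq 0$ is false: take $X=E\times B$ with $E$ elliptic and $B$ a curve, and $L=p_B^*A$ with $A$ ample of large degree on $B$; then $L$ is nef with $L\cdot\omega>0$ for every K\"ahler class $\omega$, yet $H^{n-1}(X,K_X+L)=H^1(X,K_X+L)\supseteq H^1(E,\sO_E)\otimes H^0(B,K_B+A)\neq 0$. Hence any argument that feeds $c_1(L)^2\neq 0$ into the estimate only through $c_1(L)\cdot\omega^{n-1}$ cannot close. This is precisely why \cite{DP03} does not run a direct a priori estimate: via Serre duality the problem becomes the vanishing of $H^0(X,L^{-1})$ and $H^1(X,L^{-1})$, and a nonzero class in the latter is converted into a holomorphic object --- the non-split extension $0\to\sO_X\to V\to L\to 0$ it defines, equivalently (by the hard Lefschetz theorem for nef line bundles) a nonzero section of $\Omega^1_X\otimes L$ --- from which the contradiction with $c_1(L)^2\neq 0$ is extracted geometrically. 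Your ``main obstacle'' paragraph correctly locates where the difficulty lies, but the mechanism you propose for overcoming it is not the one that works, and cannot be repaired as stated.
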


Actually, much more should be true: 

\begin{conjecture} \label{KVcon}
Let $X$ be a normal compact K\"ahler with canonical singularities, and let $L$ be a nef line bundle on $X$ of 
numerical dimension $\nu(L)$ (see Definition \ref{definitionnumerical}).  Then we have
$$ H^q(X,K_X + L) = 0, $$
provided $q > n- \nu.$ 
\end{conjecture}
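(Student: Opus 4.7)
The plan is to reduce to a smooth model and then combine Demailly's regularisation of nef classes by K\"ahler currents with an $L^2$ vanishing statement whose positivity is controlled by the numerical dimension. First, take a resolution of singularities $\holom{\pi}{\tilde X}{X}$. Since canonical singularities are rational, Grauert--Riemenschneider gives $R^i \pi_* \omega_{\tilde X} = 0$ for $i > 0$ and $\pi_* \omega_{\tilde X} = \omega_X$, so the Leray spectral sequence and the projection formula reduce the problem to proving $H^q(\tilde X, K_{\tilde X} + \pi^* L) = 0$ for $q > n - \nu(\pi^* L)$; because $\pi$ is bimeromorphic and $L$ is nef, $\nu(\pi^* L) = \nu(L)$. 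We may therefore assume $X$ is smooth.

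Second, by Demailly's regularisation theorem for nef classes we can find, for each $\epsilon > 0$, a singular Hermitian metric $h_\epsilon$ on $L$ with analytic singularities along some analytic subset $Z_\epsilon$ such that the curvature satisfies $\Theta_{h_\epsilon}(L) \geq -\epsilon \omega$ for a fixed K\"ahler form $\omega$ on $X$. The assumption $\nu(L) = \nu$ should allow one to select $h_\epsilon$ so that the smooth part of the curvature is positive in at least $\nu$ transversal directions at a generic point of $X$. Applying Nadel's vanishing (or the Bochner--Kodaira--Nakano inequality in the singular setting of Demailly), one hopes to obtain
$$
H^q\bigl(X, (K_X + L) \otimes \mathcal J(h_\epsilon)\bigr) = 0 \quad \text{for } q > n - \nu,
$$
and then pass to the limit $\epsilon \to 0$ and use subadditivity of multiplier ideals to absorb $\mathcal J(h_\epsilon)$ and conclude. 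A complementary strategy is a hard-Lefschetz-type theorem for nef line bundles on compact K\"ahler manifolds in the spirit of Demailly--Peternell--Schneider and Mourougane--Takegoshi, asserting that cup product with $[\omega]^{n-\nu}$ induces injections on the appropriate Hodge components twisted by $L$, which combined with Serre duality would yield the desired vanishing.

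The main obstacle is that outside the case $\nu = n$ --- where $L$ is big and the classical Kawamata--Viehweg machinery extends to the K\"ahler setting in the form of Theorem \ref{KV-Kaehler} pushed to its sharpest version --- there is no globally defined Iitaka-type reduction map $X \to Y$ of image dimension $\nu$ in the K\"ahler category, so one cannot simply push a vanishing statement down from a fibration of smaller dimension. Controlling the multiplier ideals uniformly in $\epsilon$ while preserving exactly $\nu$ directions of positivity is precisely the analytic difficulty that forced \cite{DP03} to restrict to $\nu \geq 2$ and $q \geq n-1$, and the general conjecture appears to require new analytic input: either a sharper regularisation of nef classes with control on the Lelong numbers of the approximants in terms of $\nu(L)$, or a substantial extension of Hodge-theoretic methods (\`a la Fujino) to the non-algebraic K\"ahler category.
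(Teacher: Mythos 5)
The first thing to observe is that the statement you are proving is labelled a \emph{conjecture} in the paper: the authors give no proof of it, only the partial case $q\ge n-1$ with $c_1(L)^2\ne 0$ (Theorem \ref{KV-Kaehler}, from \cite{DP03}) together with pointers to partial progress (\cite{Cao14}, \cite{GZ15}). Your reduction to the smooth case via Grauert--Riemenschneider is correct and is exactly the remark the authors themselves make right after stating the conjecture; that part is fine. Everything after it, however, is a research programme rather than a proof, and your own phrasing concedes this (``should allow one to select'', ``one hopes to obtain'').

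The concrete gaps are these. (1) The claim that a nef class of numerical dimension $\nu$ admits, for every $\epsilon>0$, a metric with analytic singularities whose curvature is $\ge-\epsilon\omega$ \emph{and} has $\nu$ positive eigenvalues at a generic point does not follow from Demailly's regularisation theorem; producing such metrics is essentially the analytic content of the conjecture itself. Note that even in the projective case the generalised Kawamata--Viehweg theorem is not proved along these lines: one cuts by general hyperplane sections to reduce to the big case and argues by induction, a device with no K\"ahler analogue since there are no ample divisors to cut with --- which is precisely why the conjecture is open. (2) Even granting such metrics, the limit $\epsilon\to 0$ does not ``absorb'' the multiplier ideals $\mathcal{J}(h_\epsilon)$: these ideals need not stabilise, subadditivity points the wrong way for this purpose, and the strong openness theorem \cite{GZ15} controls the behaviour of a single quasi-psh weight under perturbation, not of a whole family of regularisations. (3) The hard Lefschetz theorem of Demailly--Peternell--Schneider and Mourougane--Takegoshi yields a \emph{surjection} $H^0(X,\Omega_X^{n-q}\otimes L)\to H^q(X,K_X\otimes L)$ for $L$ nef; to extract vanishing you would still need $H^0(X,\Omega_X^{n-q}\otimes L)=0$ for $q>n-\nu(L)$, which is an open statement of comparable depth. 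In short, your outline correctly locates the difficulty, but none of the three routes you sketch closes it; there is no proof of this statement in the paper, nor, to date, in the literature.
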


In case $X$ is projective, this is the (generalised) Kawamata-Viehweg theorem \cite{Kaw82} or  \cite[6.13]{Dem01}. 
Notice that if $q = \dim X - 1,$ no assumption 
on the singularities is needed. Moreover, using the Grauert- Riemenschneider vanishing theorem
$$ R^q \pi_*(K_{\hat X}) = 0$$
for $q > 0$ and any desingularisation $\pi: \hat X \to X,$ it suffices to treat the smooth case. 
For new results towards Conjecture \ref{KVcon}, we refer to \cite{Cao14} and to the recent solution of 
Demailly's strong openness conjecture \cite{GZ15}. 

The second main ingredient is the inequality
$$
K_X \cdot c_2(X) \geq 0
$$
for a minimal simply connected K\"ahler threefold $X$ with algebraic
dimension $a(X) = 0$; i.e. $X$ does not carry a non-constant meromorphic function. We will recover this inequality by different arguments
in Section \ref{subsectionchern}, here we explain the argument from \cite{DP03} which is of 
independent interest: philosophically speaking, this inequality comes from
Enoki's theorem that the tangent sheaf of $X$ is $K_X$-semi-stable
when $K_X^2 \ne 0$ resp.\ $(K_X,\omega)$-semi-stable when $K_X^2 = 0$ (see Definition \ref{def:ss})
where $\omega$ is any K\"ahler form on $X$. Now if this semi-stability
with respect to a degenerate polarization would yield a Miyaoka-Yau
inequality, then $K_X \cdot c_2(X) \geq 0$ would follow. However this
type of Miyaoka-Yau inequalities with respect to degenerate polarizations is
not known. In the projective case, the inequality is deduced from
Miyaoka's generic nefness theorem (which uses char. $p$-methods).
Instead the paper \cite{DP03} approximates $K_X$ (in cohomology) by K\"ahler forms
$\omega_j$. If $T_X$ is still $\omega_j$-semi-stable for sufficiently
large $j$, then the usual Miyaoka-Yau inequality can be applied and in the limit 
the inequality $K_X \cdot c_2(X) \geq 0$ is established. Otherwise one examines
the maximal destabilizing subsheaf which essentially (because of $a(X)
= 0$) is independent of the polarization.

\smallskip \noindent
The third main ingredient is to prove the boundedness $h^2(X,mK_X) \leq 1$
in the case $\nu(X)=1$ (the case $\nu(X)=2$ is Theorem \ref{KV-Kaehler}).
This boundedness is shown under the additional assumption that
$a(X) = 0$ and that $\pi_1(X) $ is finite (otherwise by a result of
Campana $X$ is already bimeromorphic to a quotient of a torus). The main point is that if $h^2(X,mK_X)
\geq 2$, then we obtain ``many'' non-split extensions
$$ 0 \to K_X \to \mathcal E \to \ mK_X \to 0$$
and we analyze whether $ \sE$ is semi-stable or not. The assumption on
$\pi_1$ is used to conclude that if $\mathcal E$ is projectively flat, then
$\mathcal E$ is trivial after a finite \'etale cover.
\smallskip 

\noindent
From these three ingredients, Theorem \ref{abundance2}  follows by
applying Riemann-Roch calculations on a desingularisation of $X$.

\subsection{Semi-ampleness: The general strategy} 
For the discussion of Theorem \ref{abundance3},
we recall the definition of the numerical dimension:

\begin{definition} \label{definitionnumerical}
Let  $X$ be a normal compact K\"ahler space, and let $L$  be a nef line bundle on $X$.
Then the numerical dimension of $L$ is given by 
$$
\nu(X) := \max \{ m \in \N \ | \  c_1(L)^{m} \not \equiv 0 \}.
$$
If $K_X$ is $\mathbb Q$-Cartier of index $d$, we set
$$ 
\nu(X) := \nu(d K_X) 
$$
for the numerical dimension of $X$. 
\end{definition} 
It is easy to see that
$$ 
\kappa (L) \leq \nu(L),
$$
and if $L$ is semi-ample, then equality holds. 
Kawamata \cite{Kaw85b} discovered that if $L$ is the canonical class, then the converse holds:

\begin{theorem} 
Let $X$ be a normal compact K\"ahler space with only canonical singularities. Assume that $K_X$ is nef
and that $\kappa (X) = \nu(X).$ Then $K_X$ is semi-ample.
\end{theorem}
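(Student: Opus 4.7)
The plan is to use the Iitaka fibration of $K_X$ to reduce to abundance on its lower-dimensional fibres and to the classical base-point free theorem on its (automatically projective) base, exploiting the equality $\kappa = \nu$ at the crucial descent step.

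Set $k := \kappa(X) = \nu(X)$. The cases $k = 0$ (an effective numerically-trivial divisor on a K\"ahler space is zero, so $mK_X \sim 0$ for some $m$) and $k = \dim X$ ($K_X$ big and nef on a K\"ahler space with canonical singularities forces $X$ to be Moishezon, hence projective, and projective abundance for big and nef canonical class applies) are handled separately. Assume $0 < k < \dim X$. I would choose $m_0$ sufficiently divisible so that the sections of $m_0 K_X$ define a rational map whose image is a projective variety of dimension $k$. After passing to a resolution $\mu\colon\hat X\to X$ and to a smooth birational model of the target, we obtain a holomorphic Iitaka fibration $f\colon\hat X\to Y$ with connected fibres and $Y$ smooth projective of dimension $k$.

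For a very general fibre $F$ of $f$ one has $\kappa(F, \mu^*K_X|_F) = 0$ by the defining property of the Iitaka fibration, while the hypothesis $\nu(\mu^*K_X) = \dim Y$ together with Khovanskii--Teissier type considerations forces the nef class $\mu^*K_X|_F$ to have numerical dimension zero, hence $\mu^*K_X|_F \equiv 0$. Applying abundance to the lower-dimensional K\"ahler fibre $F$ (induction on dimension) then yields $m\mu^*K_X|_F \sim 0$ for some uniform $m$. A relative base-point-free argument, using the K\"ahler Kawamata--Viehweg vanishing of Theorem \ref{KV-Kaehler} to control the higher direct images $R^\bullet f_*\sO_{\hat X}(m\mu^*K_X)$, then upgrades this fibrewise triviality to a global linear equivalence $m\mu^*K_X \sim f^*L$ for some Cartier class $L$ on $Y$. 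Since $\nu(\mu^*K_X) = \dim Y$, the class $L$ is big and nef on the smooth projective variety $Y$, so the classical Kawamata--Shokurov base-point free theorem produces some $n$ with $nL$ globally generated on $Y$. Pulling back via $f$ and pushing down via $\mu$, using $\mu_*\sO_{\hat X} = \sO_X$, shows that $mnK_X$ is globally generated on $X$.

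The main obstacle is the descent step, passing from fibrewise triviality of $\mu^*K_X$ to an honest pull-back from $Y$. In the projective setting, Kawamata's original argument combines his covering trick with Kawamata--Viehweg vanishing; in the K\"ahler setting the vanishing input is supplied by Theorem \ref{KV-Kaehler}, but the non-algebraicity of the total space $\hat X$ introduces technical subtleties in the relative setting that the descent argument has to absorb.
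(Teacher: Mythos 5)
The paper offers no proof of this statement: it is Kawamata's theorem on nef and abundant canonical divisors, and the text simply cites \cite{Kaw85b} and records that the arguments carry over to the K\"ahler case (with \cite{Nak87} and \cite{Fuj11} as supporting references). Your skeleton --- treat $\nu=0$ and $\nu=\dim X$ separately, pass to a holomorphic model $f:\hat X\to Y$ of the Iitaka fibration with $Y$ projective, show $\mu^*K_X$ is numerically trivial on the very general fibre, descend to $Y$ and conclude there --- is indeed the architecture of Kawamata's proof, and the Khovanskii--Teissier step works: writing $m\mu^*K_X=f^*A+E$ with $A$ ample and $E\geq 0$, the hypothesis $\nu(\mu^*K_X)=\dim Y=k$ forces $\mu^*K_X\cdot (f^*A)^k\cdot\omega^{n-k-1}=0$, hence $\mu^*K_X|_F\equiv 0$, hence $E|_F=0$ and $m\mu^*K_X|_F\sim\sO_F$ directly (no induction on abundance is needed for the fibre).

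There are, however, two genuine gaps. The decisive one is the final step: from ``$L$ is big and nef on the smooth projective variety $Y$'' the Kawamata--Shokurov theorem gives you nothing, since nef and big divisors are in general \emph{not} semi-ample (Zariski's example); Theorem \ref{theorembpf} requires $aL-(K_Y+\Delta)$ to be nef and big for a suitable klt pair $(Y,\Delta)$. The only known source of this positivity is the canonical bundle formula $\mu^*K_X\sim_\Q f^*(K_Y+\Delta+M)$ with $(Y,\Delta)$ klt and $M$ the nef moduli part; the semipositivity of $M$, i.e.\ of direct images of relative pluricanonical sheaves, is what makes $xL-(K_Y+\Delta)\sim_\Q (x-\tfrac1b)L+M$ nef and big so that the base point free theorem applies. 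This ingredient is entirely absent from your proposal, and in the K\"ahler setting it is exactly the delicate point that \cite{Nak87} and \cite{Fuj11} address. The second gap is the descent itself: triviality of $m\mu^*K_X$ on the very general fibre only shows that the fixed part $E$ is vertical, not that it is a pullback from $Y$; Kawamata needs flattening of $f$, normalized base change and a negativity argument to convert the vertical $E$ into $f^*(\cdot)$, and Theorem \ref{KV-Kaehler} --- a global vanishing $H^q(X,K_X+L)=0$ for $q\geq n-1$ --- says nothing about the higher direct images $R^\bullet f_*$ you invoke for this purpose. The strategy is the right one, but as written neither of the two load-bearing steps closes.
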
 

The paper \cite{Kaw85b} deals with the algebraic case, but the methods also work
for K\"ahler spaces, see also \cite{Nak87} and \cite{Fuj11}. 

If $\nu(X) = \dim X$, then $K_X$ is big. Since a K\"ahler Moishezon space with rational singularities is projective \cite{Nam02}, we can apply the base point free theorem to see that $K_X$ is semiample.
If $\nu(X) = 0,$ and if we assume $\kappa (X) \geq 0,$ then $mK_X = \sO_X$ for some positive number $m \in \N$. 
The potential case that $\kappa (X) = 1$ and $\nu(X) = 2$ is ruled out exactly as in the algebraic case, \cite[7.3]{Kaw85} 

\begin{conclusion} In order to prove Theorem \ref{abundance3}, it suffices to rule out the cases
$$ 
\kappa (X) = 0, \  \nu(X) = 1,2. 
$$
\end{conclusion} 

Since the Kodaira dimension is non-negative, there is a positive number $m \in \N$ and an effective divisor
$$ D \in \vert mK_X \vert.$$ 
The standard method to prove that $\kappa(X) \geq 1$ is to consider the restriction map
$$
r : H^0(X, d(m+1)K_X) \rightarrow H^0(D, d K_D)
$$
for a suitable positive integer $d.$ 
 Arguing by induction on the dimension we aim to prove that $H^0(D, d K_D) \neq 0$ for some $d \in \N$ and that 
 some non-zero section $u \in H^0(D, d K_D)$ lifts via $r$ to a global section $\tilde u \in H^0(X, d(m+1)K_X)$ on $X$.
However, $D$ might be very singular and therefore it is not possible to analyse the divisor $D$ directly. 
In order to circumvent this difficulty, Kawamata \cite{Kaw92c} developed the strategy, further explored in \cite{Uta92}, 
to consider log pairs $(X, B)$ with $B = \supp D$ and to improve the singularities of this pair
via certain birational transformations. 
This requires deep techniques of birational geometry of pairs within the theory of minimal models. In particular we have to run a log MMP for certain log pairs $(X, \Delta)$,
which can be stated as follows. 

\subsection{Semi-ampleness: Use of a log-MMP} 

\begin{theorem} \label{theoremdltMMP} \cite{CHP16}
Let $X$ be a normal $\Q$-factorial compact K\"ahler threefold which is not uniruled. 
Let $D \in |m K_X|$ be a pluricanonical divisor and set $B:=\supp D$. Suppose that the pair $(X, B)$ is dlt.
Then there exists a terminating $(K+B)$-MMP, that is, there exists a bimeromorphic map
$$
\varphi: (X, B) \dashrightarrow (X', B':=\varphi_*B)
$$
which is a composition of $K+B$-negative divisorial contractions and flips such that $X'$ is a 
normal $\Q$-factorial compact K\"ahler threefold, the pair $(X', B')$ is dlt and $K_{X'}+B'$ is nef.
\end{theorem}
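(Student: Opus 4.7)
The plan is to run a $(K_X+B)$-MMP in the K\"ahler category, extending the tools from Sections \ref{sectionkaehler}--\ref{sectionMMP} from terminal $K_X$ to the dlt log setting $(X,B)$. The first observation is that $K_X+B$ is pseudoeffective: by Theorem \ref{theorembrunella}, $K_X$ is pseudoeffective since $X$ is not uniruled, and $B$ is effective. This pseudoeffectivity is preserved under every $(K+B)$-negative contraction or flip, so we never leave the non-uniruled regime and $X'$ will again be non-uniruled at each stage.

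If $K_X+B$ is already nef we stop. Otherwise I would prove a log analogue of Theorem \ref{theoremNA}: there is a $(K_X+B)$-negative extremal ray $R \subset \NAX$ generated by a rational curve of bounded degree. I would follow the strategy of Theorem \ref{theoremNA}. By Demailly--P\u aun there is a proper subvariety $Z \subsetneq X$ with $(K_X+B)|_Z$ not pseudoeffective. If $Z$ is a curve, it deforms by \cite{Kol96} to cover a surface; if $Z$ is a divisor $S$, apply Boucksom's divisorial decomposition to $K_X+B$ and combine with the adjunction $K_S + \mathrm{Diff}_S(B) = (K_X+B+S)|_S$, using the dlt hypothesis to ensure that the restricted pair $(S,\mathrm{Diff}_S(B))$ has controlled singularities, in order to conclude via a log version of Lemma \ref{lemmafundamental} that $S$ is covered by rational curves. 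Bend-and-break as in Proposition \ref{propositionNE} then bounds $-(K_X+B) \cdot \Gamma$.

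Once an extremal ray $R$ is identified, the contraction is obtained by mimicking the proof of Theorem \ref{thmcontr}: the supporting class $\alpha := K_X+B+\omega$ with $\omega$ K\"ahler is nef and big, its null-locus is analyzed case by case via Collins--Tosatti, the Grauert--Ancona--van Tan criterion constructs the analytic contraction $\holom{\varphi}{X}{X'}$, and Lemma \ref{lemmakaehler} certifies that $X'$ is K\"ahler. The dlt property of $(X',B':=\varphi_*B)$ is verified via the standard negativity lemma, which propagates log discrepancy inequalities across a $(K+B)$-negative contraction. If $\varphi$ is small, the flip exists by Mori's analytic flip theorem \cite{Mor88}: the statement is local around the exceptional locus (which is projective because $-(K_X+B)$ is $\varphi$-ample), so it reduces to the algebraic case of \cite{Sho92}; K\"ahler-ness of the flipped space follows once more from Lemma \ref{lemmakaehler}.

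For termination, divisorial contractions drop $b_2(X)$ by one, so only finitely many occur. Termination of a sequence of log flips for dlt threefold pairs goes through by Shokurov's special termination argument (the set of divisors with log discrepancy below $1$ is finite and discrepancies increase along flips), which is local in nature and transfers verbatim to the K\"ahler category. The main obstacle I foresee is the first step: upgrading the cone and contraction machinery from $K_X$ with terminal $X$ to $K_X+B$ with $(X,B)$ dlt. Each ingredient of Sections \ref{subsectionrational}--\ref{subsectioncontraction}---Boucksom's decomposition, Brunella's rational curve criterion, the adjunction-based uniruledness of surfaces along which the class fails to be pseudoeffective, and the identification of $\mbox{Null}(\alpha)$ with the locus swept out by the extremal ray---must be re-proved with the boundary $B$ present, carefully tracking how the dlt condition restricts to components of $B$ and to their intersections.
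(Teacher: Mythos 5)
The paper does not actually prove Theorem \ref{theoremdltMMP} in the text: it defers entirely to \cite[Sect.~3, Sect.~4]{CHP16} and only remarks that the proof ``requires several improvements of the arguments we have seen in Section \ref{sectionMMP}''. Your outline is exactly that strategy --- a log cone theorem for $\NAX$ via Boucksom decomposition and adjunction, a log contraction theorem via the supporting class $K_X+B+\omega$ and Collins--Tosatti, Shokurov-type flips, and termination via $b_2$ plus special termination --- and you correctly identify the upgrade from terminal $K_X$ to dlt $(X,B)$ as the crux, so in that sense you match the cited proof. Two points deserve sharpening. First, the paper explicitly states that the main obstacle to a general log MMP is the contraction theorem when $B$ has non-integer coefficients; the hypothesis $B=\supp D$ (so $B$ is \emph{reduced}) is therefore not incidental but is what makes the contraction step feasible, and your outline should say where reducedness enters (in \cite{CHP16} it is used both in the adjunction/different computation on components of $B$ and in the geometry of the null locus). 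Second, for the flip step Mori's theorem \cite{Mor88} only covers terminal $K_X$-flips; for a $(K+B)$-negative small contraction of a dlt pair you need Shokurov's $3$-fold log flips \cite{Sho92}, together with the observation (which you do make) that the construction is local around the projective exceptional locus, so the algebraic statement applies; K\"ahlerness of the flip is then checked as in Lemma \ref{lemmakaehler}. With those adjustments your proposal is a faithful, if necessarily schematic, account of the argument the paper points to.
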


 This is of course not the general log-MMP  for K\"ahler threefolds, but it is sufficient for our purposes. The main obstacle for a general theorem is to show the existence of contractions for dlt pairs $(X, B)$
where $B$ has non-integer coefficients. 

We will not explain the proof of this result \cite[Sect.3, Sect.4]{CHP16} here, it requires several improvements of the arguments we have seen in Section \ref{sectionMMP}. Let us rather consider its consequences:
we consider the most complicated case $\nu(X) = 2$. Our goal is to prove  that $\kappa (X) \geq 1$.
Using the MMP of Theorem \ref{theoremdltMMP},
this can be reduced by very technical arguments to the following:

\begin{proposition} \label{reduction} 
Let $X$ be a normal $\Q$-factorial compact K\"ahler threefold with at most klt singularities. Suppose that 
there exists
a divisor $D \in |m K_{X}|$ with the following properties:
\begin{enumerate}
\item Set $B:=\supp D$. The pair $(X, B)$ is lc and $X \setminus B$ has terminal singularities.
\item The divisor $K_{X}+B$ is nef and we have $\nu(K_{X}+B)=2$. Moreover we have
$
\kappa(X) = \kappa(K_X+B).
$
\item For every irreducible component $T \subset B$ we have $(K_{X}+B)|_{T} \neq 0$.
\item We have $(K_{X}+B) \cdot K_{X}^2 \geq 0$.
\end{enumerate}
Then $\kappa(X) \geq 1$.
\end{proposition}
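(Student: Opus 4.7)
The plan is to prove $\kappa(K_X+B)\ge 1$ by an asymptotic Riemann--Roch argument; combined with hypothesis (2) this gives $\kappa(X)\ge 1$. The main tools are Kawamata--Viehweg vanishing (Theorem~\ref{KV-Kaehler}) together with intersection-theoretic consequences of (3) and (4).

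First I collect intersection geometry. Since $\nu(K_X+B)=2$ we have $(K_X+B)^3=0$. As $D\in|mK_X|$ is effective, $K_X$ is pseudoeffective; the nef square $(K_X+B)^2$ is a limit of positive $(2,2)$-forms, so $(K_X+B)^2\cdot K_X\ge 0$. For each irreducible component $T_i\subset B$ the pullback to the normalization $\tilde T_i$ is nef, whence $((K_X+B)|_{T_i})^2\ge 0$ and
$$(K_X+B)^2\cdot B=\sum_i((K_X+B)|_{T_i})^2\ge 0.$$
From $0=(K_X+B)^3=(K_X+B)^2\cdot K_X+(K_X+B)^2\cdot B$ both nonnegative summands vanish; by (3) each $(K_X+B)|_{T_i}$ is then a nonzero nef class of numerical dimension $1$. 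By adjunction for the lc pair $(X,B)$, on $\tilde T_i$ this class equals $K_{\tilde T_i}+\Delta_i$ for an lc surface pair, and surface log-abundance makes it semiample with $\kappa=1$ on each component~$T_i$.

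Next, for $N\ge 2$ the nef class $(N-1)(K_X+B)$ has nonzero square, so Theorem~\ref{KV-Kaehler} yields $H^q(X,N(K_X+B))=0$ for $q\ge 2$. Hence $h^0(X,N(K_X+B))\ge\chi(X,N(K_X+B))$. Riemann--Roch on a resolution, with the top two coefficients killed by Step~1, reduces to
$$\chi(X,N(K_X+B))=\tfrac{N}{12}\,(K_X+B)\cdot(K_X^2+c_2(X))+\chi(\sO_X).$$
Hypothesis (4) gives $(K_X+B)\cdot K_X^2\ge 0$, so it remains to establish the K\"ahler Miyaoka-type bound $(K_X+B)\cdot c_2(X)\ge 0$ with strictly positive combined coefficient; granting this, $\chi$ and therefore $h^0$ grows at least linearly in $N$, giving $\kappa(K_X+B)\ge 1$ and hence $\kappa(X)\ge 1$ by~(2).

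The main obstacle is the $c_2$-inequality in the K\"ahler lc setting: in the projective case it follows from Miyaoka's generic semipositivity of $\Omega_X^1$ via reduction to characteristic~$p$, which is unavailable here. I would adapt the Enoki-type approximation used for Theorem~\ref{abundance2}: approximate the nef pseudoeffective class $K_X+B$ by K\"ahler classes $\omega_j$, exploit $(T_X,\omega_j)$-semistability for large $j$, apply the K\"ahler Miyaoka--Yau inequality, and pass to the limit; the lc boundary is absorbed by working first with the klt pair $(X,(1-\varepsilon)B)$ (klt by (1) and terminality on $X\setminus B$) before letting $\varepsilon\to 0$. The borderline case where the $K_X^2$ and $c_2$ contributions both happen to vanish individually is ruled out using the second half of Step~1: the semiample classes $(K_X+B)|_{T_i}$ of $\kappa=1$ produce many sections on $B$, which lift to $X$ via the restriction sequence
$$0\to\sO_X(N(K_X+B)-B)\to\sO_X(N(K_X+B))\to\sO_B(N(K_X+B)|_B)\to 0,$$
again using $H^{\ge 2}$-vanishing for $K_X+(N-1)(K_X+B)$ to control the obstruction; this gives linear growth of $h^0$ and hence $\kappa(K_X+B)\ge 1$ in the degenerate case as well.
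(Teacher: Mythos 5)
Your overall strategy is the paper's: prove the Chern class inequality $(K_X+B)\cdot(K_X^2+c_2(X))\ge 0$ via an Enoki-type generic nefness statement, feed it into Riemann--Roch, and combine surface abundance on $B$ with the restriction sequence. Your Step~1 (forcing $(K_X+B)^2\cdot K_X=(K_X+B)^2\cdot B=0$ from $\nu=2$) is correct and matches what the paper uses implicitly. But two of your cohomological steps have genuine gaps. First, your Kawamata--Viehweg vanishing is applied to the wrong bundle: $K_X+(N-1)(K_X+B)$ equals $N(K_X+B)-B$, not $N(K_X+B)$, and the discrepancy $B$ is effective but not nef, so Theorem~\ref{KV-Kaehler} gives nothing for $H^q(X,N(K_X+B))$ itself. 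The paper accordingly claims only that $h^3(X,nL)=0$ and that $h^2(X,nL)$ is eventually \emph{constant}; the legitimate adjoint-type vanishing is $H^2(X,nL-B)=0$, since $nL-B=K_X+(nm-1)(K_X+B)$ with $(nm-1)(K_X+B)$ nef of nonzero square.

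Second, and more seriously, your treatment of the borderline case does not work: the obstruction to lifting sections through the restriction sequence lies in $H^1(X,N(K_X+B)-B)$, and Theorem~\ref{KV-Kaehler} only covers $q\ge n-1=2$; no $H^1$-vanishing is available (if it were, the proposition would be immediate and none of the Chern class work would be needed). The paper's mechanism runs in the opposite direction: the Chern class inequality is used only to bound $\chi(X,nL)$ \emph{below} by a constant, which together with the boundedness of $h^2$ and $h^3$ gives $h^0(X,nL)\ge h^1(X,nL)+c$; then abundance for the slc surface $B$ with $\nu=1$ makes $h^0(B,nL|_B)$, hence $h^1(B,nL|_B)$ (constant $\chi$, vanishing $h^2$), grow linearly, and the vanishing of $H^2(X,nL-B)$ transports this linear growth to $h^1(X,nL)$, forcing $h^0(X,nL)$ to grow. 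In particular there is no ``non-degenerate case'' in which $\chi$ itself grows; the $h^1$-argument is the whole proof. Finally, the Bogomolov and generic-nefness results (Theorems~\ref{theoremnonnegative} and~\ref{theorembogomolov}) are set up for threefolds with isolated singularities, so one must first pass to a terminal modification $\holom{\mu}{X'}{X}$ with $K_{X'}+\Delta\sim_{\Q}\mu^*K_X$ and take $P=\Delta+\mu^*B$; your perturbation to $(X,(1-\varepsilon)B)$ does not address this point.
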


The acroynym klt stands for "Kawamata log terminal", and the definition is the same as in the algebraic setting. Properly speaking, $X$ is klt if there
is a positive integer $m$ such that $(\omega_X^{\otimes m})^{**}$ is locally free and there is
a resolution $\pi: \tilde X \to X$ such that 
$$\omega^{\otimes m}_{\tilde X} = \pi^* ((\omega_X^{\otimes m})^{**}) \otimes \mathcal O_{\tilde X}(\sum a_imE_i),$$
where the $E_i$ are the $\pi-$exceptional divisors and all $a_i > -1.$ 
In other words, we have the usual equation
$$ K_{\tilde X} =  \pi^*(K_X) + \sum a_i E_i $$
with $a_i > -1.$ 

We will explain the proof of this proposition in Subsection \ref{subsectiontheproof}, but this requires
another ingredient:

\subsection{Semi-ampleness: Generic nefness and Chern class inequalities} \label{subsectionchern}
We saw in Subsection \ref{subsectionsection} that a Chern class inequality in the spirit of \cite[6.1]{Miy87} plays a crucial role for nonvanishing, this problem appears again for abundance.
Having in mind our application, we restrict ourselves to threefolds; moreover we shall need -
and this is crucial - only to consider threefolds with isolated singularities. 
We do not make any attempt to define Chern classes of coherent sheaves on singular spaces, all that we need 
are the intersection numbers 
$$ \alpha \cdot c_1(\sF)^2,\  \alpha \cdot c_2(\sF), \  \alpha^2 \cdot c_1(\sF).$$
where $\alpha  \in N^1(X).$ 
For example, let us define $\alpha \cdot c_2(\sF).$ Choose a desingularisation $\pi: \hat X \to X;$ then $c_2(\pi^*(\sF)) \in H^4(\hat X, \bZ)$
is well-defined, and we
set
$$ \alpha \cdot c_2(\sF) = \pi^*(\alpha) \cdot c_2(\pi^*(\sF)).$$
See \cite[7.1]{CHP16} for details. 
In particular, slope stability with respect to a nef class can be defined:

\begin{definition} \label{def:ss} 
Let $X$ be a normal compact K\"ahler threefold with isolated singularities 
and let $\alpha$ be a nef class on $X$. We say that a non-zero torsion-free sheaf $\sF$ is $\alpha$-semistable (resp. $\alpha$-stable)
if for every non-zero saturated subsheaf $\sE \subset \sF$ we have
$$
\mu_{\alpha}(\sE) := \frac{\alpha^{2} \cdot c_1(\sE) }{\rk \sE} \leq \frac{\alpha^{2} \cdot c_1(\sF) }{\rk \sF}  =: \mu_{\alpha}(\sF) \qquad (\mbox{resp.} \ <). 
$$
\end{definition}

The generic nefness notion we shall use is given in 

\begin{definition} \label{definitiongenericallynef}
Let $X$ be a normal compact K\"ahler threefold,
and let $\alpha$ be a nef class on $X$. A non-zero torsion-free coherent sheaf $\sF$ on $X$
is $\alpha$-generically nef if for every torsion-free quotient sheaf
$\sF \rightarrow Q \rightarrow 0$ we have
$$
\alpha^{2} \cdot c_1(Q) \geq 0.
$$
\end{definition}

In this setting the following Chern class inequality holds. 

\begin{theorem} \label{theoremnonnegative}
Let $(X, \omega)$ be a compact K\"ahler threefold with isolated singularities.  Let $\sF$ be a non-zero reflexive coherent sheaf
on $X$ such that $\det \sF$ is $\Q$-Cartier. Suppose that there exists a pseudoeffective class $P \in N^1(X)$ such that
$$
L := c_1(\sF) + P
$$
is a nef class. Suppose furthermore that for all $0 < \varepsilon \ll 1$ the sheaf $\sF$ is $(L+\varepsilon \omega)$-generically nef. Then we have
$$
L \cdot c_2(\sF) \geq \frac{1}{2} (L \cdot c_1^2(\sF) - L^3).
$$
In particular, if $L \cdot c_1^2(\sF) \geq 0$ and $L^3=0$, then 
\begin{equation} \label{nonnegative}
L \cdot c_2(\sF) \geq 0.
\end{equation}
\end{theorem}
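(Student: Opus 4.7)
The plan is to reduce to the smooth case via a resolution $\pi: \hat X \to X$: since $X$ has isolated singularities, we may replace $\sF$ by the reflexive hull $\hat{\sF} := (\pi^*\sF)^{**}$ on $\hat X$, and by the definition of Chern class intersections on singular spaces recalled just before the statement, the desired inequality lifts to the corresponding inequality on the smooth $\hat X$ for the classes $\pi^*L$ and $\pi^*P$. Fixing an auxiliary K\"ahler class $\hat{\omega}$ on $\hat X$, the class $\alpha_\varepsilon := \pi^*L + \varepsilon \hat{\omega}$ is K\"ahler for every $\varepsilon > 0$, and the hypothesis that $\sF$ is $(L + \varepsilon \omega)$-generically nef translates into the statement that the minimum slope of the Harder-Narasimhan filtration of $\hat{\sF}$ with respect to $\alpha_\varepsilon$ is non-negative for all sufficiently small $\varepsilon > 0$.

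For each semistable HN graded piece $G_i$ of rank $r_i$ and slope $\mu_i = \alpha_\varepsilon^2 \cdot c_1(G_i)/r_i \geq 0$, the Bogomolov-Gieseker inequality (valid in the K\"ahler setting) gives $\alpha_\varepsilon \cdot (2 r_i c_2(G_i) - (r_i - 1) c_1(G_i)^2) \geq 0$. Plugging these into the filtration formulae $c_1(\hat{\sF}) = \sum_i c_1(G_i)$ and $c_2(\hat{\sF}) = \sum_i c_2(G_i) + \sum_{i<j} c_1(G_i) c_1(G_j)$, one estimates $\alpha_\varepsilon \cdot c_2(\hat{\sF})$ from below in terms of $\alpha_\varepsilon \cdot c_1(\hat{\sF})^2$, the slopes $\mu_i$, and $\alpha_\varepsilon^3$. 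The assumption $\pi^*L = c_1(\hat{\sF}) + \pi^*P$ with $\pi^*P$ pseudoeffective then enters via $\alpha_\varepsilon^2 \cdot \pi^*P \geq 0$ together with the Khovanskii-Teissier inequality $\alpha_\varepsilon \cdot (\pi^*P)^2 \cdot \alpha_\varepsilon^3 \leq (\alpha_\varepsilon^2 \cdot \pi^*P)^2$, which for the K\"ahler class $\alpha_\varepsilon$ controls the otherwise awkward term $\alpha_\varepsilon \cdot (\pi^*P)^2$. Letting $\varepsilon \to 0$ and collecting terms produces the claimed bound $L \cdot c_2(\sF) \geq \tfrac12(L \cdot c_1(\sF)^2 - L^3)$.

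The main obstacle is twofold. First, the HN filtration and the ranks of its graded pieces a priori depend on $\varepsilon$, so one must pass to a subsequence $\varepsilon_n \downarrow 0$ along which the underlying combinatorial data stabilize in order to compare the Bogomolov-Gieseker inequalities in the limit. Second, in the degenerate case $L^3 = 0$ the K\"ahler volume $\alpha_\varepsilon^3$ vanishes only to first order in $\varepsilon$, so the Khovanskii-Teissier estimate collapses to leading order; one must then extract information from the next-order term in $\varepsilon$, and this is precisely where the assumption of generic nefness \emph{for every} small $\varepsilon > 0$, rather than only in the limit $\varepsilon = 0$, becomes essential.
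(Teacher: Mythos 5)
Your overall architecture --- resolve the isolated singularities, pass to the Harder--Narasimhan filtration with respect to the K\"ahler classes $L+\varepsilon\omega$, apply a Bogomolov-type inequality to the semistable graded pieces, and let $\varepsilon\to 0$ --- is exactly the route the paper intends: the survey gives no proof of Theorem \ref{theoremnonnegative} beyond the remark that it rests on the Bogomolov inequality of Theorem \ref{theorembogomolov}, and refers to \cite[7.1]{CHP16} for the details, which carry out a Miyaoka-type computation of precisely this shape.

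The gap is at the one step you leave to ``collecting terms'', namely the mechanism that produces the constant $\tfrac12(L\cdot c_1^2(\sF)-L^3)$. Writing $\alpha$ for the K\"ahler class $\alpha_\varepsilon$, $\sG_i$ for the graded pieces of rank $r_i$ and slope $\mu_i\geq 0$, Whitney's formula plus Bogomolov give
$$
2\,\alpha\cdot c_2(\sF)-\alpha\cdot c_1(\sF)^2\ \geq\ -\sum_i\tfrac{1}{r_i}\,\alpha\cdot c_1(\sG_i)^2 ,
$$
and the Hodge-index (Khovanskii--Teissier) inequality must then be applied to the classes $c_1(\sG_i)$ themselves, yielding $\alpha\cdot c_1(\sG_i)^2\leq (r_i\mu_i)^2/\alpha^3$ --- not to $\pi^*P$, whose square never enters the computation. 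The term $-L^3$ then comes from bounding the \emph{maximal} slope: generic nefness applied to the quotient $\sF/\sF_1$, with $\sF_1$ the maximal destabilizing subsheaf, gives $r_1\mu_1=\alpha^2\cdot c_1(\sF_1)\leq\alpha^2\cdot c_1(\sF)$, and the pseudoeffectivity of $P$ enters only through $\alpha^2\cdot c_1(\sF)=\alpha^2\cdot(L-P)\leq\alpha^2\cdot L$; combining, $\sum_i r_i\mu_i^2/\alpha^3\leq(\alpha^2\cdot L)^2/(r_1\alpha^3)$, which tends to $L^3$ as $\varepsilon\to 0$. Without this slope bound there is no control on $\mu_1$ and the whole estimate fails, so this is the missing idea rather than a routine bookkeeping step. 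Note also that when $L^3=0$ the same bound tends to $0$ because the numerator is $O(\varepsilon^2)$ while $\alpha_\varepsilon^3$ is $O(\varepsilon)$, so no ``next-order'' analysis is needed, contrary to your last paragraph. Two smaller points: the hypothesis is generic nefness with respect to $L+\varepsilon\omega$ on $X$, whose pullback $\pi^*L+\varepsilon\pi^*\omega$ is only nef on $\hat X$ and differs from your K\"ahler class $\pi^*L+\varepsilon\hat\omega$, so either a second perturbation or an argument on $X$ itself (as in \cite{CHP16}) is required; and the dependence of the HN filtration on $\varepsilon$ is harmless, since the inequality obtained for each fixed $\varepsilon$ no longer refers to the filtration.
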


The proof is based on a Bogomolov inequality for stable sheaves on K\"ahler threefolds with isolated singularities which we state next.

\begin{theorem} \label{theorembogomolov}
Let $X$ be a normal compact K\"ahler threefold with isolated singularities,
and let $\alpha$ be a K\"ahler class on $X$.  Let $\sF$ be an $\alpha-$stable non-zero torsion-free coherent sheaf on $X$.
Then we have
$$
\alpha \cdot c_2(\sF)  \geq \big(\frac{r-1}{2r}\big)  \alpha \cdot  c_1^2(\sF).
$$ 
\end{theorem}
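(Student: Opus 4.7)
The plan is to reduce the statement to the classical Bogomolov inequality on a smooth compact K\"ahler threefold, applied to a reflexive pullback of $\sF$ on a resolution of $X$, and then to pass to the limit along a family of K\"ahler approximations. Recall that on a smooth compact K\"ahler threefold $(\hat X,\hat\alpha)$, any $\hat\alpha$-semistable reflexive sheaf $\hat\sF$ of rank $r$ admits an admissible Hermite-Einstein metric by the Bando-Siu extension of the Donaldson-Uhlenbeck-Yau theorem, and consequently satisfies
$$
\hat\alpha\cdot\bigl(2r\,c_2(\hat\sF)-(r-1)\,c_1^2(\hat\sF)\bigr)\geq 0,
$$
which is equivalent to the inequality asserted in the theorem.

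First I would choose a resolution $\pi:\hat X\to X$ with $\hat X$ a compact K\"ahler manifold; this is possible because $\sing(X)$ is a finite set of points and one may blow up centers lying over them while preserving the K\"ahler property. Set $\hat\sF:=(\pi^*\sF)^{**}$, a reflexive sheaf of rank $r$ on $\hat X$ that coincides with $\pi^*\sF$ outside the exceptional locus $E$. Fix a K\"ahler class $\omega_0$ on $\hat X$ and, for $\varepsilon>0$, consider the K\"ahler approximation $\alpha_\varepsilon:=\pi^*\alpha+\varepsilon\omega_0$. Since $E$ is contracted to the zero-dimensional set $\sing(X)$, the projection formula shows that any intersection of $\pi^*\alpha$ with a cycle supported on $E$ vanishes; in particular the discrepancies between $c_i(\hat\sF)$ and $c_i(\pi^*\sF)$, which are supported on $E$, do not contribute, so
$$
\pi^*\alpha\cdot c_1^2(\hat\sF)=\alpha\cdot c_1^2(\sF),\qquad \pi^*\alpha\cdot c_2(\hat\sF)=\alpha\cdot c_2(\sF)
$$
in the sense of the intersection numbers introduced in Subsection \ref{subsectionchern}.

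The key step, and the main expected obstacle, is to show that $\hat\sF$ is $\alpha_\varepsilon$-semistable for all sufficiently small $\varepsilon>0$. For any saturated subsheaf $\hat\sE\subset\hat\sF$, the direct image $\pi_*\hat\sE$ is a saturated subsheaf of $\pi_*\hat\sF=\sF$ (the last equality uses reflexivity of $\sF$ and that $\pi$ is an isomorphism over $X\setminus\sing(X)$), and the projection formula gives $(\pi^*\alpha)^2\cdot c_1(\hat\sE)=\alpha^2\cdot c_1(\pi_*\hat\sE)$. Hence $\alpha$-stability of $\sF$ produces a slope gap $\mu_\alpha(\pi_*\hat\sE)<\mu_\alpha(\sF)$, and provided this gap is uniform over all potential destabilizers, the perturbation $\varepsilon\,\omega_0^2\cdot c_1(\hat\sE)$ can be absorbed for $\varepsilon$ small. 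Such uniformity reduces to a boundedness result for saturated subsheaves of $\hat\sF$ with bounded slope, the analytic analogue of Grothendieck's Quot-scheme boundedness in the K\"ahler setting; this is the technically most delicate ingredient. Once $\alpha_\varepsilon$-semistability is secured, applying the smooth Bogomolov inequality to $(\hat X,\alpha_\varepsilon,\hat\sF)$ and letting $\varepsilon\to 0$, combined with the Chern-class identities above, yields the asserted inequality on $X$.
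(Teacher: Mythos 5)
Your overall architecture coincides with the proof given in \cite[Sect.~7]{CHP16} (the present survey only states Theorem \ref{theorembogomolov} and defers to that reference): desingularise, pass to the reflexive pullback $\hat\sF=(\pi^*\sF)^{**}$, perturb $\pi^*\alpha$ to a genuine K\"ahler class $\alpha_\varepsilon=\pi^*\alpha+\varepsilon\omega_0$, invoke the smooth Bogomolov--L\"ubke inequality coming from an admissible Hermite--Einstein metric, and let $\varepsilon\to 0$, the hypothesis of isolated singularities guaranteeing that $\pi^*\alpha$ annihilates every class supported on the exceptional locus, so that nothing is lost in the limit and the intersection numbers of Subsection \ref{subsectionchern} are recovered.

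Two points keep your sketch from being a complete proof. First, the step you yourself flag --- that $\hat\sF$ remains (semi)stable for $\alpha_\varepsilon$ when $\varepsilon$ is small --- is not a routine verification but the actual mathematical content of the argument: without a \emph{uniform} gap $\mu_{\pi^*\alpha}(\hat\sE)\leq\mu_{\pi^*\alpha}(\hat\sF)-\delta$ over all saturated candidates $\hat\sE$ that could destabilise for some small $\varepsilon$, the perturbation term cannot be absorbed. Establishing this uniformity is exactly what the key lemma of \cite{CHP16} does, via a boundedness statement for the relevant family of saturated subsheaves (their $\omega_0$-degrees are bounded above by standard Chern--Weil arguments, while the destabilising condition together with the $\alpha$-stability of $\sF$ squeezes their $\pi^*\alpha$-slopes, forcing the possible determinant classes into a finite set); naming this ``the analytic analogue of Quot-scheme boundedness'' is a correct diagnosis of the difficulty, not a resolution of it. Second, Bando--Siu produces admissible Hermite--Einstein metrics for \emph{stable} reflexive sheaves, not for merely semistable ones; since the hypothesis gives you $\alpha$-stability of $\sF$, you should aim directly for $\alpha_\varepsilon$-\emph{stability} of $\hat\sF$, as is done in \cite{CHP16}, or else insert a Jordan--H\"older reduction. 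Two harmless further remarks: the theorem assumes $\sF$ only torsion-free, so the identity $\pi_*\hat\sF=\sF$ requires first replacing $\sF$ by $\sF^{**}$ (legitimate, since $c_1$ is unchanged and $\alpha\cdot c_2$ only decreases under this replacement), and the resolution $\hat X$ is K\"ahler because it may be chosen as a projective modification over the finitely many singular points.
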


Theorem \ref{theoremnonnegative} can be applied to $(\Omega_X)^{**}$ (or $\Omega_X/{\rm torsion}$) since we have
the following

\begin{proposition} \label{propositiongenericallynef}
Let $X$ be a normal compact K\"ahler space of dimension $n$ with canonical singularities. 
Suppose that $K_X$ is nef or $\kappa(X) \geq 0$.
Then $\Omega_X /{\rm torsion}$ is generically nef with respect to any nef class $\alpha$, i.e. for every torsion-free quotient sheaf
$$
\Omega_X \rightarrow \mathcal Q \rightarrow 0,
$$
we have $\alpha^{n-1} \cdot c_1(\mathcal Q) \geq 0$.
\end{proposition}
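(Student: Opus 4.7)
The plan is to reduce to a smooth resolution of $X$ and then apply the generic semi-positivity theorem for the cotangent bundle of a non-uniruled K\"ahler manifold (Miyaoka in the algebraic setting, generalized to arbitrary movable classes by Campana--P\v aun, with extensions to the K\"ahler category in subsequent work).

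First, I would pass to a desingularization $\pi : \tilde X \to X$. Since $X$ has canonical singularities, $K_{\tilde X} = \pi^* K_X + \sum a_i E_i$ with $a_i \geq 0$, so $K_{\tilde X}$ is pseudoeffective whenever $K_X$ is nef; and if only $\kappa(X) \geq 0$ is assumed then $\kappa(\tilde X) = \kappa(X) \geq 0$, yielding pseudoeffectivity again. By the K\"ahler analogue of Brunella's theorem (Theorem \ref{theorembrunella} in dimension three, and its higher-dimensional analogue obtained by similar foliation-theoretic arguments), $\tilde X$ is not uniruled.

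Next, I would transport a given torsion-free quotient $q : \Omega_X / \mathrm{torsion} \to \mathcal{Q}$ to $\tilde X$. The natural comparison map $\pi^*(\Omega_X / \mathrm{torsion}) \to \Omega_{\tilde X}$ is an isomorphism off $\mathrm{Exc}(\pi)$, so composing with $\pi^* q$ and saturating inside $\Omega_{\tilde X}$ produces a torsion-free quotient $\tilde q : \Omega_{\tilde X} \to \tilde{\mathcal{Q}}$ agreeing with $\pi^* \mathcal{Q}$ over the locus where $\pi$ is an isomorphism. Consequently
$$
c_1(\tilde{\mathcal{Q}}) = \pi^* c_1(\mathcal{Q}) + E,
$$
where $E$ is a divisor supported on $\mathrm{Exc}(\pi)$. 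Setting $\tilde\alpha := \pi^* \alpha$ (still nef) and using the projection formula together with $\tilde\alpha^{n-1} \cdot E = 0$ (since $E$ is $\pi$-exceptional and $\tilde\alpha$ is pulled back from $X$), one obtains
$$
\tilde\alpha^{n-1} \cdot c_1(\tilde{\mathcal{Q}}) = \alpha^{n-1} \cdot c_1(\mathcal{Q}).
$$

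Finally, generic semi-positivity on the smooth non-uniruled K\"ahler manifold $\tilde X$ yields $\tilde\alpha^{n-1} \cdot c_1(\tilde{\mathcal{Q}}) \geq 0$ for every torsion-free quotient of $\Omega_{\tilde X}$ and every nef class $\tilde\alpha$ (a nef class intersected $n-1$ times lies in the closure of the movable cone, which is the setting of Campana--P\v aun), and the displayed equality delivers the conclusion. The main obstacle is the bookkeeping of the middle step: one must justify that $\tilde{\mathcal{Q}}$ is actually a quotient of $\Omega_{\tilde X}$ rather than merely of $\pi^* \Omega_X$, and carefully control the exceptional correction $E$ so that it is killed by $\pi^*\alpha^{n-1}$; the input from Miyaoka / Campana--P\v aun is then applied as a black box, but one should verify that its K\"ahler version is available in the generality we need.
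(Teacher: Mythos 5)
There is a genuine gap, and it sits exactly at the point you flag as a black box. Your reduction to a resolution $\pi:\tilde X\to X$ is fine (the transport of the quotient by saturating inside $\Omega_{\tilde X}$ and killing the exceptional correction with $\pi^*\alpha^{n-1}$ via the projection formula is standard bookkeeping), but the input you then invoke --- generic semi-positivity of $\Omega_{\tilde X}$ with respect to movable classes for a non-uniruled compact K\"ahler manifold --- is not available in the K\"ahler category. Miyaoka's generic nefness theorem rests on reduction to characteristic $p$, and the Campana--P\v aun extension to movable classes rests on the projective theory of the movable cone and on \cite{BDPP13}; the paper is explicit that these tools fail for non-algebraic K\"ahler manifolds (see the discussion in Subsection \ref{subsectionsection} and the Outlook, where the K\"ahler version of \cite{BDPP13} appears as the open Conjecture \ref{conjectureBDPP}). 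So the step ``$\tilde X$ not uniruled $\Rightarrow$ $\Omega_{\tilde X}$ generically nef'' is precisely the missing theorem, not a citable one. This is also why the proposition carries the hypothesis ``$K_X$ nef or $\kappa(X)\geq 0$'' rather than the weaker ``$K_X$ pseudoeffective'': under your route the stronger hypothesis would be unnecessary, which should have been a warning sign.

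The paper's actual proof takes a different, analytic route: it reduces to Enoki's theorem \cite{Eno88} on the (semi)stability, resp.\ generic semi-negativity, of the tangent sheaf of a \emph{minimal} K\"ahler space, which is proved by solving a Monge--Amp\`ere equation; the hypotheses ``$K_X$ nef'' or ``$\kappa(X)\geq 0$'' are what make this reduction possible. To repair your argument you would either need to prove the K\"ahler Campana--P\v aun statement (a substantial open problem) or replace that black box by the Enoki-type argument, i.e.\ use the nefness of $K_X$ (or an effective pluricanonical divisor) to produce the approximate K\"ahler--Einstein metrics that control the slopes of quotients of $\Omega_X$.
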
 

This result is a K\"ahler version of Miyaoka's generic nefness theorem which makes the weaker
assumption that $K_X$ is pseudoeffective. Our proof is by reduction to Enoki's theorem \cite{Eno88}, which makes use of the solution of a Monge-Amp\`ere equation. 

\subsection{Semi-ampleness: proof of Proposition \ref{reduction}} \label{subsectiontheproof}
Running some relative MMP one sees that there exists a terminal modification
$$
\holom{\mu}{X'}{X}
$$ 
of $X,$ 
i.e., $X'$ has only terminal singularities, and 
there exists an effective $\Q$-divisor $\Delta$ such that
$$
K_{X'} + \Delta \sim_\Q \mu^* K_X.
$$ 
Choose $m > 0$ such that  $m(K_{X}+B) =: L$ is Cartier and set $L':= \mu^* (L)$.
The key is now the basic Chern class inequality 
\begin{equation} \label{todd}
 L' \cdot (K_{X'}^2+c_2(X'))\geq 0,
\end{equation}
which comes down to proving that $L' \cdot c_2(X') \geq 0.$ 
Now
by Proposition \ref{propositiongenericallynef}, the sheaf $\Omega_{X'}$ is generically nef.
Since
$$
K_{X'} + \Delta + \mu^* B \sim_\Q \mu^* (K_X+B)
$$ 
is nef, the conditions of Theorem \ref{theoremnonnegative} are satisfied with  $\sF:=(\Omega_{X'})^{**}$ and $P:= \Delta + \mu^* B$.
Thus we conclude 
$$ L'\cdot c_2(X') \geq 0,$$
hence the Chern class inequality (\ref{todd}) is established.
Now by a Riemann-Roch formula,
$$
\chi(X', \sO_{X'}(nL')) = n L'\cdot \frac{K_{X'}^2+c_2(X')}{12} + \chi(X', \sO_{X'})
$$
for all $n \in \N$.
Thus \eqref{todd} yields a constant $k$ such that 
$$
\chi(X', \sO_{X'}(nL')) \geq  k 
$$
for all $n \in \N$,
and therefore 
$$
\chi(X, \sO_X(nL)) \geq  k 
$$
for all $n \in \N$. 
Now we show that $H^3(X,\sO_X(nL)) = 0$ and that $h^2(X,\sO_X(nL))$ is constant for large $n,$ hence 
\begin{equation} \label{hzero}  
h^0(X, nL) \geq h^1(X, nL) + c
\end{equation}
with some constant $c \in \Z$. Now comes the inductive argument we have been aiming for: 
$B$ is a minimal surface with at most slc singularities and $\nu(B)=1$,
so we know by abundance that $h^0(B, \mathcal O_B(nL))$ grows linearly. 
The Euler characteristic $\chi(B, \mathcal O_B(nL))$ is constant, moreover one can show 
$$ h^2(B,\mathcal O_B(nL)) = 0$$
for $n \gg 0$. Thus $h^1(B, \mathcal O_B(nL))$ also grows linearly.
Using the restriction sequence (cf. \cite[Sect.8]{CHP16} for details) one deduces that $h^1(X,\mathcal O_X(nL)) $ grows at least linearly.  
Therefore  we have
$$
\kappa(X) = \kappa(K_X+B) = \kappa (L) \geq 1.
$$

\section{Applications} \label{sectionapplications}

\subsection{Simple K\"ahler spaces}
One of the initial motivations for the study of the K\"ahler MMP was to describe 
the ``most non-algebraic" K\"ahler spaces, i.e. those that contain only a few subvarieties.

\begin{definition} \label{def:simple}  
Let $X$ be a normal compact K\"ahler space. 
We say that $X$ is simple, if there is no proper positive-dimensional subvariety 
through a very general point of $X$.
\end{definition} 

Examples of simple K\"ahler spaces are general tori or general hyperk\"ahler manifolds (say the 
Hilbert scheme $S^{[n]}$ of a K3 surface without curves). It is expected that all simple spaces 
arise from these two types of manifolds by standard constructions, i.e bimeromorphic transformations and finite maps. 
The MMP for K\"ahler threefolds confirms this in dimension three:

\begin{theorem} \label{theoremkummer} Let $X$ be a normal 
$\Q$-factorial compact K\"ahler threefold with terminal singularities. Suppose that $X$ is simple.
Then there exists a bimeromorphic morphism $X \rightarrow T/G$ where $T$ is a torus
and $G$ a finite group acting on $T$. 
\end{theorem}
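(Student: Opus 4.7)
The plan is to run the K\"ahler MMP on $X$, use simplicity to pin down the Kodaira dimension of the resulting good minimal model, and then invoke a Beauville--Bogomolov type decomposition for K-trivial K\"ahler threefolds.

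Since a simple threefold cannot carry a rational curve through a general point, $X$ is not uniruled. By Theorem \ref{theoremmain} there is an MMP $X \dashrightarrow X'$ producing a $\Q$-factorial terminal K\"ahler threefold $X'$ with $m K_{X'}$ globally generated for some $m \in \N$. Each divisorial contraction and each flip is an isomorphism outside a proper analytic subset, so $X \dashrightarrow X'$ is biholomorphic on the complement of a countable union of proper subvarieties, and simplicity transfers from $X$ to $X'$. I would then determine $\kappa(X')$ via the Iitaka fibration $X' \dashrightarrow Z$ associated to $|m K_{X'}|$: if $\kappa(X')=3$ then $X'$ is Moishezon and K\"ahler with rational singularities, hence projective by \cite{Nam02}, but a projective threefold carries a divisor through every point, contradicting simplicity; if $\kappa(X')\in\{1,2\}$ then the general Iitaka fibre is a positive-dimensional proper subvariety through the general point of $X'$, again violating simplicity. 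Hence $\kappa(X')=0$, and after replacing $m$ we have $m K_{X'} \sim 0$.

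Let $\holom{\pi}{\tilde X}{X'}$ be the corresponding index-one cyclic quasi-\'etale cover: $\tilde X$ is a terminal K\"ahler threefold with trivial canonical bundle, and $X' = \tilde X/G_0$ for the Galois group $G_0$ of $\pi$. The Beauville--Bogomolov decomposition in the singular K\"ahler setting then asserts that, after a further finite quasi-\'etale cover, $\tilde X$ splits as a product whose possible building blocks in dimension three are a complex torus, a K3 surface times an elliptic curve, or a strict Calabi--Yau threefold. Simplicity rules out the product K3$\times E$, whose K3-fibres pass through a general point, and rules out a strict Calabi--Yau threefold because $h^{2,0}=0$ together with the K\"ahler property forces projectivity by Kodaira's criterion, a case already excluded above. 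Thus the final cover is a torus $T$, and composing the finite covers produces a finite group $G$ with $X' = T/G$, so $X$ is bimeromorphic to $T/G$.

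The main obstacle is the invocation of a Beauville--Bogomolov decomposition for K-trivial compact K\"ahler threefolds with canonical singularities; this is the principal non-trivial input beyond the MMP and abundance package summarized earlier in the article, and the passage from the algebraic structure theorem to its K\"ahler counterpart is the subtle step.
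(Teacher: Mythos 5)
Your argument follows the paper's proof essentially step for step: non-uniruledness from simplicity, the MMP and abundance to reach a good minimal model $X'$, the exclusion of $\kappa(X')\geq 1$ by producing subvarieties through general points, and then the structure theorem for numerically trivial canonical class (the paper's Theorem \ref{st}, which is exactly the singular K\"ahler Beauville--Bogomolov statement you invoke), with the K3 $\times$ elliptic curve case killed by simplicity. Your extra discussion of a ``strict Calabi--Yau'' factor is harmless, since $h^{2,0}=0$ forces projectivity as you say, and the paper's Theorem \ref{st} already assumes non-algebraicity.

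There is one genuine omission: the theorem claims a bimeromorphic \emph{morphism} $X \rightarrow T/G$, whereas you stop at ``$X$ is bimeromorphic to $T/G$.'' The MMP only yields a bimeromorphic map $X \dashrightarrow X' \simeq T/G$, which a priori has indeterminacy. The paper closes this gap by observing that since $X$ (hence $T$) contains no positive-dimensional subvariety through a very general point, the torus $T$ is simple, so $T/G$ contains \emph{no} positive-dimensional proper subvarieties at all; resolving the indeterminacy of $X \dashrightarrow T/G$ would otherwise produce such subvarieties as images of the exceptional fibres, so the map is already holomorphic. You should add this final step (or some equivalent argument) to actually obtain the morphism asserted in the statement.
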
 

\begin{proof}
Since $X$ is not uniruled, we know by Theorem \ref{theoremmain} that there exists a minimal model $X \dashrightarrow X'$. Since $X$ (and hence $X'$) is not covered by divisors we see that $\kappa(X')=0$. 
Since $K_{X'}$ is semiample by Theorem \ref{theoremmain} we have $\nu(X')=\kappa(X')=0$, i.e. the canonical divisor $K_{X'}$  is numerically trivial. Since $X$ (and hence $X'$) is not covered by curves, the structure theorem \ref{st}, stated below, yields that 
$$X' \simeq T/G$$ 
with $T$ a torus and $G$ a finite group. Since $X$ (and hence $X'$) is not covered by positive-dimensional subvarieties,
the torus $T$ has no positive-dimensional subvarieties. In particular $T/G$ has no positive-dimensional
subvarieties, so $X \dashrightarrow T/G$ extends to a morphism.
\end{proof} 

\begin{theorem}  \label{st}
Let $X$ be a non-algebraic $\Q$-factorial compact K\"ahler threefold with terminal singularities.
If $K_X \equiv 0$, there exists a Galois cover $f: \tilde X \to X$ that is \'etale in codimension one 
such that either $X$ is a torus or a product of an elliptic curve and a K3 surface. 
\end{theorem}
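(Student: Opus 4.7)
The strategy is to pass to a finite cover étale in codimension one on which a Beauville--Bogomolov type decomposition applies, and then to use the non-algebraicity hypothesis to eliminate the strict Calabi--Yau threefold factor. Since $X$ has terminal singularities, these are isolated, so any quasi-étale cover produced below remains in the class of $\Q$-factorial compact K\"ahler threefolds with terminal singularities. Moreover, by \cite{Nam02} a Moishezon K\"ahler space with rational singularities is projective, so non-algebraicity propagates along finite covers: if $\tilde X \to X$ is finite and $\tilde X$ is Moishezon, then $X$ is Moishezon and hence projective, contradicting the hypothesis.

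First, I would reduce to the case $\omega_{\tilde X} \cong \mathcal{O}_{\tilde X}$. The abundance theorem (Theorem \ref{abundance1}) applied to $X$, together with $K_X \equiv 0$, gives $\kappa(X) = \nu(X) = 0$, and therefore $m K_X \sim 0$ for some $m \geq 1$. Taking the associated cyclic index-one cover---which is étale in codimension one because the singular locus of a terminal threefold has codimension three---allows one to assume that the canonical sheaf of $\tilde X$ is a trivial line bundle.

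Next, I would invoke a Beauville--Bogomolov style decomposition in the singular K\"ahler setting: after a further quasi-étale cover, $\tilde X$ splits as a product $T \times \prod_i Y_i$, where $T$ is a complex torus and each $Y_i$ is an irreducible factor of strict Calabi--Yau or irreducible holomorphic symplectic type. This is the main external input and the principal obstacle in the outline: in the projective case it is the singular decomposition theorem of Druel--Greb--Guenancia--H\"oring--Kebekus--Peternell, whose K\"ahler threefold analogue requires either adapting the holonomy-based arguments via singular Ricci-flat K\"ahler--Einstein metrics on terminal threefolds, or a more direct low-dimensional argument through the Albanese map, analyzing the irregularity $q(\tilde X) \in \{0,1,2,3\}$ case by case and using that a general Albanese fibre has trivial canonical class.

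Finally, I would conclude by dimension count and non-algebraicity. Since $\dim \tilde X = 3$ and any irreducible holomorphic symplectic factor has even dimension at least two, the only possible splittings are $\tilde X = T$ a three-dimensional torus, $\tilde X = E \times S$ with $E$ elliptic and $S$ a (possibly singular) K3 surface, or $\tilde X$ a strict Calabi--Yau threefold. The last case is excluded because a strict Calabi--Yau threefold satisfies $h^{0,1} = h^{0,2} = 0$; passing to a crepant terminal resolution, $H^{1,1}(\tilde X, \R) = H^2(\tilde X, \R)$ and Kodaira's approximation argument produces an integral K\"ahler class, so $\tilde X$ is Moishezon; Namikawa's theorem then makes $\tilde X$ projective, contradicting the non-algebraicity inherited from $X$. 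Only the torus and $E \times \mathrm{K3}$ cases remain, as claimed.
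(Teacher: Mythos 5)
Your first reduction (index-one cyclic cover, \'etale in codimension one because terminal threefold singularities are isolated) and your final exclusion of the strict Calabi--Yau case via $h^{0,2}=0 \Rightarrow$ projective (Kodaira/Namikawa \cite{Nam02}) both match what is actually needed. The problem is your second step. You invoke, as ``the main external input,'' a Beauville--Bogomolov decomposition for singular K\"ahler threefolds --- but that is essentially the content of Theorem \ref{st} itself, and no such theorem was available in the K\"ahler category: the singular decomposition results you allude to (\cite{GKP11}, \cite{Dru16} and their successors) are proved for \emph{projective} varieties, relying on characteristic-$p$ flatness criteria and algebraic holonomy arguments that do not transfer. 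You flag this yourself as ``the principal obstacle,'' which is an honest admission that the proposal assumes what is to be proved. A proof that black-boxes the decomposition is therefore circular in the K\"ahler setting.

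The route the paper actually takes sidesteps the decomposition theorem entirely. After the index-one cover one has $\omega_{\tilde X}\cong\sO_{\tilde X}$, and Riemann--Roch on (a resolution of) the terminal threefold gives $\chi(\tilde X,\sO_{\tilde X})=-\tfrac{1}{24}K_{\tilde X}\cdot c_2=0$, hence $1-q+h^{0,2}-1=0$, i.e. $q(\tilde X)=h^{0,2}(\tilde X)$. Non-algebraicity then forces $q\geq 1$ (your own $q=0$ argument, run in reverse: $q=0$ would give $h^{0,2}=0$ and hence projectivity via \cite{Nam02}), and the structure is extracted directly from the Albanese map for $q\in\{1,2,3\}$, using that its general fibre has trivial canonical class. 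This is the ``more direct low-dimensional argument through the Albanese map'' that you mention as an alternative but do not carry out; the identity $\chi(\sO)=0$ is the key observation that makes that case analysis close up, and it is missing from your outline. To repair the proposal you would need to replace the appeal to the decomposition theorem by this $\chi(\sO_{\tilde X})=0$ computation and the ensuing Albanese analysis (or cite \cite{CHP16}, where it is done).
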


This result should be understood as a generalisation of the Beauville-Bogomolov decomposition to
{\em singular} K\"ahler threefolds, the proof is based on the observation that after a cyclic covering
one has $\chi(X, \sO_X)=0$ (cf. \cite{CHP16} for details and further structure results for non-algebraic K\"ahler threefolds).
We refer to \cite{Cam04c}, \cite{GKP11}, \cite{Dru16} 
for further information concerning the Beauville-Bogomolov decomposition.

\subsection{Algebraic approximation of K\"ahler spaces}

\begin{definition} Let $X$ be a normal compact K\"ahler space. We say that $X$ is algebraically approximable
if there exists a proper flat morphism $\pi: \mathcal X \to B$ 
to a complex space $B$, such that the following holds.
\begin{enumerate} 
\item there is a point $0 \in B $ such that $X_0 \simeq X;$
\item there is a sequence $(t_k)_{k \in \N}$ in $B$, converging to $0$ such that the fibre $X_{t_k}$ is a projective
variety for all $k \in \N$.
\end{enumerate} 
\end{definition} 

Originally, Kodaira asked whether any compact K\"ahler manifold can be approximated algebraically. However, C. Voisin 
constructed counterexamples in dimension at least $4$ \cite{Voi04}. In dimension at least $10$, more is true: there are compact K\"ahler manifolds $X$ such that  no
smooth bimeromorphic model can be approximated algebraically \cite{Voi06}. In view of the MMP, it is natural to consider minimal models of $X$, which are usually singular. 
So one can formulate:

\begin{conjecture} (Modified Kodaira Conjecture) \\
Let $X$ be a compact K\"ahler manifold. Then there exists a bimeromorphic model  $X \dashrightarrow X'$
where $X'$ is normal $\Q$-factorial compact K\"ahler space with terminal singularities
that is algebraically approximable. 
\end{conjecture}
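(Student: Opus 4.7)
The natural plan is to exploit the MMP and abundance machinery developed in the paper to replace $X$ by a highly structured birational model and then to construct an algebraic deformation of that model case by case according to the Kodaira dimension.

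First, if $X$ is uniruled, I would run the MMP of Theorem \ref{theoremmain} to obtain a Mori fibre space $\varphi: X' \to S$ onto a normal K\"ahler surface $S$. Every compact K\"ahler surface is algebraically approximable (by Kodaira), and the fibres of $\varphi$ are rational (in particular rigid projective varieties), so the idea is to pull back a projective deformation $\mathcal S \to B$ of $S$ and lift it to a deformation of $X'$ over the same base; here the projectivity of $\varphi$ and the $(-K_{X'})$-ampleness give enough positivity for the lifting by classical Horikawa/Kodaira-type arguments. If $X$ is not uniruled, I would use Theorem \ref{theoremmain} and abundance (Theorem \ref{abundance1}) to produce a minimal model $X'$ with $K_{X'}$ semiample, and then stratify by $\kappa(X')$.

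For $\kappa(X')=3$ the variety $X'$ is Moishezon and K\"ahler, hence projective by \cite{Nam02}, so the constant family gives the desired approximation. For $\kappa(X')=0$ the structure theorem \ref{st} reduces $X'$, up to a Galois cover \'etale in codimension one, to either a torus or a product of an elliptic curve with a K3 surface; tori admit algebraic approximations by abelian varieties, and K3 surfaces admit algebraic approximations inside the $20$-dimensional period domain, and these approximations are $G$-equivariant for suitable choices, so they descend to $X'$. For intermediate Kodaira dimension $\kappa(X')=1$ or $2$, the semiample canonical class gives an Iitaka fibration $\varphi: X' \to Y$ onto a lower-dimensional normal K\"ahler base with $K_{X'}=\varphi^*A$ (up to $\Q$-linear equivalence) for some ample class on $Y$. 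The plan then is to approximate $Y$ algebraically (a curve is already projective; a surface is approximable), and to extend the approximation of the base to one of the total space using the relative MMP and the fact that $\varphi$ is polarized, so that the moduli of the fibre—which is a variety of Kodaira dimension zero of dimension $\leq 2$ and hence algebraically approximable—varies in a tractable way.

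The main obstacle, and the reason the statement remains conjectural, is the lifting step in the intermediate $\kappa$ cases: one needs to show that the obstructions in $H^2$ of the relative tangent complex of $(X',\varphi)$ either vanish or can be killed after a further birational modification preserving the $\Q$-factorial terminal K\"ahler class. Voisin's counterexamples \cite{Voi04,Voi06} show that for smooth $X$ the analogous obstruction is genuinely nonzero in dimension $\geq 4$, so the crux is to verify that the singular terminal models produced by the MMP have strictly smaller obstruction spaces than their smooth resolutions—an arithmetic genus/deformation-theoretic statement for terminal threefolds that, combined with the classification of terminal singularities and the Bogomolov-type Chern class inequalities of Subsection \ref{subsectionchern}, is where I would expect the real work to lie. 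In higher dimensions the same strategy requires both the full K\"ahler MMP and an analogous structure theorem for $K$-trivial varieties, neither of which is currently available.
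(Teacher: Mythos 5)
This statement is labelled a \emph{conjecture} in the paper, and the paper offers no proof of it: the only established case recorded there is Graf's theorem, which handles smooth K\"ahler threefolds with $\kappa(X)=0$ by producing a minimal model with a locally trivial algebraic approximation. So there is no proof in the paper against which to compare your argument, and your text --- which is explicitly a strategy sketch ending with an acknowledgement that ``the statement remains conjectural'' --- is correctly not claiming to close the problem. To the extent a comparison is possible, your reduction matches the philosophy the paper suggests (run the MMP/abundance to get a structured model, then approximate according to $\kappa$), and your $\kappa=3$ case (Moishezon $+$ K\"ahler $+$ rational singularities $\Rightarrow$ projective via \cite{Nam02}) is genuinely complete. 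Your $\kappa=0$ case is essentially the territory of \cite{Gr16}, though you understate the difficulty: the cover in Theorem \ref{st} is only \'etale in codimension one, so descending a $G$-equivariant deformation of the torus or of $E\times\mathrm{K3}$ through a quasi-\'etale quotient, and then transporting it back across the bimeromorphic map to $X'$ while preserving $\Q$-factorial terminal K\"ahler-ness, is precisely the hard content of Graf's paper, not a routine step.

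Two specific points where the sketch would need repair if it were ever to become a proof. First, in the uniruled case the base $S$ of the Mori fibre space need not be smooth, and ``lift a deformation of $S$ to $X'$ by Horikawa/Kodaira arguments'' is unsupported: the relevant obstruction space for the pair $(X',\varphi)$ has no reason to vanish, and nothing in the paper (or in the classical deformation theory you invoke) gives the lifting for a singular conic-bundle-type fibration over a non-projective surface. Second, your description of Voisin's counterexamples as exhibiting a nonzero ``obstruction in $H^2$ of the relative tangent complex'' misrepresents them: her results in \cite{Voi04,Voi06} are topological/Hodge-theoretic, showing that the cohomology algebra of certain K\"ahler manifolds cannot be realised by any projective manifold (and, in \cite{Voi06}, by any smooth bimeromorphic model), rather than identifying a deformation-theoretic obstruction that one could hope to ``kill'' on a terminal model. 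The honest conclusion is the one you reach --- the intermediate Kodaira dimensions and the uniruled case are open --- but the mechanism blocking them is not accurately located in your last paragraph.
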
 

In fact if $X$ is not uniruled some minimal model of $X$ should be algebraically approximable.
A first partial positive answer in dimension $3$ is given by P.Graf:

\begin{theorem}  \cite{Gr16}
Let $X$ be a smooth compact K\"ahler threefold with $\kappa (X) = 0$. 
Then $X$ has a minimal model which is algebraically approximable.
\end{theorem}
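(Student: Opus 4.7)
The plan is to run the MMP of Theorem \ref{theoremmain} to reduce to a minimal model with numerically trivial canonical class, then to apply the structure Theorem \ref{st} and algebraically approximate the universal cover equivariantly.

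First, since $\kappa(X)=0$ the threefold $X$ is not uniruled, so Theorem \ref{theoremmain} yields a minimal model $X\dashrightarrow X'$ with $K_{X'}$ semiample. Combined with $\kappa(X')=\kappa(X)=0$ this forces $mK_{X'}\sim\mathcal O_{X'}$ for some $m>0$; in particular $K_{X'}\equiv 0$. If $X'$ happens to be Moishezon, then by \cite{Nam02} it is already projective, and the constant family $X'\times B\to B$ provides a trivial algebraic approximation. I may therefore assume that $X'$ is non-algebraic.

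Next I would invoke the structure Theorem \ref{st} to obtain a finite Galois cover $f\colon\tilde X'\to X'$ with group $G$, \'etale in codimension one, such that $\tilde X'$ is either a torus $T$ or a product $E\times S$ of an elliptic curve and a K3 surface. The strategy is then to construct a $G$-equivariant deformation $\tilde{\mathcal X}\to B$ of $\tilde X'$ whose fibres over a sequence $t_k\to 0$ are projective, and to set $\mathcal X:=\tilde{\mathcal X}/G\to B$. In the torus case, the $G$-invariant complex structures on the underlying real lattice form a smooth subfamily of the period domain, inside which the algebraic tori (those whose periods satisfy the Riemann bilinear relations) are dense by a standard linear-algebra argument. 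In the product case, $G$ preserves the factorisation by dimension, so it acts on $E$ and on $S$ separately, and one reduces to an equivariant approximation of the K3 factor; this rests on unobstructedness of $G$-equivariant K3 deformations together with density of algebraic periods in the $G$-invariant part of the K3 period domain.

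The main obstacle is the descent step: one has to check that the quotient family $\mathcal X=\tilde{\mathcal X}/G\to B$ is proper and flat, has central fibre $X'$, and has very general fibres that are again $\Q$-factorial compact K\"ahler threefolds with terminal singularities but now projective (projectivity coming from the fact that a finite quotient of a projective variety is projective). This requires a careful local analysis of the $G$-action near its fixed locus, of the compatibility of the chosen equivariant deformation of $\tilde X'$ with this local structure, and of the preservation of terminal $\Q$-factorial K\"ahler singularities along the family. It is this equivariant-plus-descent analysis that constitutes the technical core of the argument and is carried out in detail in \cite{Gr16}.
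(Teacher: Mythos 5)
The survey states this result without proof, citing \cite{Gr16}, and your sketch reproduces the strategy of that reference: run the MMP of Theorem \ref{theoremmain} to reach a minimal model with torsion (hence numerically trivial) canonical class, dispose of the projective case via \cite{Nam02}, use the structure Theorem \ref{st} to present the non-algebraic case as a quotient of a torus or of $E\times S$ by a finite group acting freely in codimension one, and approximate the cover $G$-equivariantly before descending. The quotient family you describe is exactly the locally trivial deformation alluded to in the remark following the theorem, so the proposal is correct in outline, with the genuine technical content (density of projective members in the $G$-equivariant period domains and the descent through the fixed locus) rightly deferred to \cite{Gr16}.
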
 

Actually, the deformation Graf constructs is even locally trivial (in the sense that it preserves the
singularities \cite[Defn.2.10]{Gr16}).

\section{Outlook}

In this final section we would like to explain some of the challenges that will appear in the construction
of the minimal model program for K\"ahler spaces of arbitrary dimension.

{\em 1. Existence of rational curves.} In Subsection \ref{subsectionrational} we saw
that the construction of rational curves for (not necessarily algebraic) K\"ahler threefolds
is done in two steps: first Brunella's theorem on rank one foliations describes the case where the canonical class is not pseudo-effective, then the deformation theory of curves on threefolds allows to describe $K_X$-negative curves when $K_X$  is pseudoeffective. In higher dimension it is possible to replace the deformation theoretic part by a subadjunction argument \cite{CH15} 
that reduces the problem to the characterisation of uniruledness in lower dimension. More precisely the existence
of rational curves for K\"ahler manifolds is reduced to the following difficult 

\begin{conjecture} \label{conjectureBDPP}
Let $X$ be a compact K\"ahler manifold. Then the canonical class $K_X$ is pseudoeffective if and only if $X$ is not uniruled.
\end{conjecture}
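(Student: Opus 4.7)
The direction ``$X$ uniruled $\Rightarrow K_X$ not pseudoeffective'' is the elementary one. A covering family $\{C_t\}_{t \in T}$ of rational curves defines a movable curve class $\alpha \in N_1(X)$, and freeness of a general member $C_{t_0}$ gives $K_X \cdot \alpha = K_X \cdot C_{t_0} \leq -2$ via a standard deformation-theoretic computation on the normalisation $\PP^1 \to C_{t_0}$. Since movable classes pair non-negatively with pseudoeffective $(1,1)$-classes, this rules out pseudoeffectivity of $K_X$.

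For the converse I would follow the strategy of Boucksom--Demailly--P\v{a}un--Peternell and split the argument in two stages. First, I would establish a transcendental duality between the pseudoeffective cone $\overline{\mathcal E}(X) \subset N^1(X)$ and a \emph{movable cone} $\overline{\mathcal M}(X) \subset N_1(X)$, defined as the closure of the cone generated by classes $\mu_*(\tilde\omega_1 \cdots \tilde\omega_{n-1})$ with $\mu : \tilde X \to X$ a K\"ahler modification and $\tilde\omega_i$ K\"ahler on $\tilde X$. The key technical inputs would be Boucksom's divisorial Zariski decomposition for pseudoeffective $(1,1)$-classes in the K\"ahler setting together with the orthogonality of its negative and positive parts; granting these, the dualisation argument of \cite{DP04} used in the excerpt to identify $\Phi(\mathrm{Nef}(X))$ with $\NAX$ in dimension three should extend to arbitrary dimension.

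Given the duality, non-pseudoeffectivity of $K_X$ produces a movable class $\alpha = \mu_*(\tilde\omega_1 \cdots \tilde\omega_{n-1})$ with $K_X \cdot \alpha < 0$. I would then try to represent $\alpha$ geometrically: replacing the $\tilde\omega_i$ by cohomologous closed positive currents with analytic singularities via Demailly's regularisation, a careful intersection-theoretic argument should cut out a one-parameter family of curves sweeping out $\tilde X$, whose pushforward yields a covering family on $X$ of negative canonical degree. From such a family one wants to extract rational curves; in dimension three this is Brunella's Theorem~\ref{theorembrunella}, and in general I would aim to reduce to lower dimensions via the subadjunction techniques of \cite{CH15}, restricting to the surface swept out by a two-parameter subfamily and invoking the lower-dimensional case of the conjecture inductively on the canonical class of the resulting foliation.

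The main obstacle is the final step: in the projective setting the analogous conclusion ultimately rests on Miyaoka--Mori via reduction to characteristic $p$, and no transcendental replacement of comparable strength is currently available. A secondary but substantial difficulty is controlling the Lelong numbers of the currents representing $\tilde\omega_1, \ldots, \tilde\omega_{n-1}$ in the approximation step, since mass concentration along the $\mu$-exceptional locus could prevent the intersection class $\alpha$ from being carried by a genuine covering family of curves rather than merely a positive $(n-1,n-1)$-current, so that the passage from ``transcendental movable'' to ``swept out by actual curves'' is itself a serious hurdle.
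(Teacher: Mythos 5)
The statement you are addressing is stated in the paper as Conjecture \ref{conjectureBDPP}: the paper offers no proof of it and explicitly says that ``completely new methods seem to be necessary'', the only settled case being the projective one (Miyaoka--Mori, via reduction to characteristic $p$). So there is no proof in the paper to compare yours against, and your proposal, read as a proof, has genuine gaps --- which, to your credit, you largely identify yourself. The elementary direction (uniruled $\Rightarrow$ $K_X$ not pseudoeffective) is fine and is the one implication the paper does assert. For the converse, your outline is precisely the route the paper suggests in its Outlook (``establish the K\"ahler version of the main result in \cite{BDPP13}''), but each of its two main steps is itself an open problem. The duality between the pseudoeffective cone $\overline{\mathcal E}(X)$ and the transcendental movable cone $\overline{\mathcal M}(X)$ is a theorem only on projective manifolds; the paper's identification $\Phi(\mathrm{Nef}(X)) = \NAX$ in dimension three rests on the Demailly--P\v{a}un numerical characterisation of the K\"ahler cone and its singular refinements, and the analogous dualisation for the pseudoeffective cone in arbitrary dimension requires an orthogonality estimate for the divisorial Zariski decomposition that is not available in the K\"ahler setting, even though the decomposition itself is.

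The second step is the one you correctly flag as the main obstacle, and it is fatal to the proposal as a proof: even granting a movable class $\alpha$ with $K_X\cdot\alpha<0$, there is no known transcendental mechanism for producing a single rational curve from it --- in the projective case this is exactly where bend-and-break in characteristic $p$ enters. Note also that the paper's own three-dimensional argument does not follow your route: it never passes through the movable cone, but uses Brunella's foliation theorem (Theorem \ref{theorembrunella}), which exploits the rank-one foliation defined by a holomorphic two-form on a non-algebraic K\"ahler threefold and has no higher-dimensional analogue; and \cite{CH15} reduces the higher-dimensional statement to the conjecture in lower dimensions rather than proving it, so your inductive appeal to ``the lower-dimensional case of the conjecture'' is circular unless the induction can be seeded, which at present it cannot be beyond low dimension. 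In short, your proposal is a sensible research program that coincides with the paper's stated outlook, but it is not a proof, and the statement remains open.
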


Of course, if $X$ is uniruled, then $K_X$ cannot be pseudo-effective. In case $X$ is projective, the conjecture is a theorem of Miyaoka-Mori, \cite{MM86}; again the construction relies on char $p$ methods. 
Here completely new methods seem to be necessary. One step could be to establish the K\"ahler version of the main result in \cite{BDPP13}. 

{\em 2. Contracting extremal rays.} The nef supporting class $\alpha$ introduced in Subsection \ref{subsectioncontraction}
is an extremely useful tool that mimics the role of the line bundle $L$ appearing
in the basepoint free theorem \ref{theorembpf}. However all the considerations aim at verifying
the conditions of the Grauert-Fujiki-Ancona-Van Tan contraction theorem \ref{theoremgrauert} which can be quite lengthy.
In particular exhibiting the proper map on the exceptional locus/Null locus becomes even more difficult 
since even a posteriori exceptional loci of Mori contractions can be very singular. 
It seems that new ideas are necessary to deal with this problems, a good start would be a new version of Grauert's theorem
with conditions that are more adapted to the Mori setting.

{\em 3. Checking the K\"ahler property.}
One of the surprising features of the MMP for K\"ahler threefolds is that a posteriori the positivity properties
of all the divisors (and cohomology classes) appearing in our proofs can be checked by intersecting with curves.
The following example shows that already in dimension 4 this is no longer true:

\begin{example}
Let $B$ be a K3 surface that does not contain any curves (in particular $B$ is K\"ahler, but not projective). Denote by $X'$ the projectivisation of the rank three vector bundle $\Omega_B \oplus \sO_B$, then $\psi: X' \rightarrow B$ is a $\PP^2$-bundle so $X'$ is a compact K\"ahler fourfold.
Denote by $S' \subset X'$ the $\psi$-section corresponding to the quotient line bundle
$\Omega_B \oplus \sO_B \rightarrow \sO_B$, and let $\holom{\mu}{X}{X'}$ be the blow-up
of $X'$ along the surface $S'$. Then $X$ is a compact K\"ahler fourfold, and $\mu$ is the contraction of a $K_X$-negative extremal ray in $\NAX$ (in particular $-K_X$ is relatively ample). Note that there does not exist a surface $S \subset X$ that dominates $S' \subset X$: indeed the $\mu$-exceptional divisor $E \rightarrow S'$ is isomorphic to $\PP(\Omega_B) \rightarrow B$ and it is easy to check that the $\PP^1$-bundle $\PP(\Omega_B) \rightarrow B$ does not admit any meromorphic multisections.

Challenge: let $\alpha$ be a nef supporting class (cf. Subsection \ref{subsectioncontraction}) of the extremal ray contracted by $\mu$. Show that we can choose $\alpha$ such that $\alpha = \mu^* \omega$ with $\omega$ a {\em K\"ahler} class on $X'$. 
\end{example}

\bibliographystyle{amsplain}

\newcommand{\etalchar}[1]{$^{#1}$}
\def\cprime{$'$}

\end{document}